\numberwithin{equation}{section}
\renewcommand{\emph}[1]{\textsf{\textit{#1}}}
\renewcommand\section{\@startsection{section}{1}%
	\z@{1.5\linespacing\@plus.5\linespacing}{1\linespacing}%
	{\large\normalfont\scshape\centering}}
\newcommand{\PP}{\ensuremath{\mathbb{P}}}
\newcommand{\R}{\ensuremath{\mathbb{R}}}
\newcommand{\C}{\ensuremath{\mathbb{C}}}
\newcommand{\Z}{\ensuremath{\mathbb{Z}}}
\newcommand{\I}{\ensuremath{\mathbf{i}}}
\newcommand{\Real}{\ensuremath{\mathfrak{Re}}}
\newcommand{\la}{\lambda}
\renewcommand{\rho}{\varrho}
\newcommand{\eps}{\varepsilon}
\renewcommand{\leq}{\leqslant}
\renewcommand{\geq}{\geqslant}
\newcommand{\Sign}{\mathsf{Sign}}
\newcommand{\Signplus}{\Sign^+}
\newcommand{\Zstrip}{\mathbb{Z}^{\rm strip}}
\newcommand{\Geom}{\mathrm{Geom}}
\newcommand{\Gammainv}{\mathrm{Gamma}^{-1}}
\newtheorem{theorem}{Theorem}[section]
\newtheorem{lemma}[theorem]{Lemma}
\newtheorem{proposition}[theorem]{Proposition}
\newtheorem{corollary}[theorem]{Corollary}
\theoremstyle{definition}
\newtheorem{remark}[theorem]{Remark}
\theoremstyle{definition}
\theoremstyle{definition}
\newtheorem{definition}[theorem]{Definition}
\theoremstyle{definition}
\title[Integral formulas for two-layer Schur and Whittaker processes]{Integral formulas for two-layer Schur and Whittaker processes}
\author[G.~Barraquand]{Guillaume Barraquand}
\address{Laboratoire de Physique de l'Ecole Normale Supérieure, Ecole Normale Supérieure, PSL University, CNRS, Sorbonne Université, Université Paris-Cité, 24 rue Lhomond, 75005 PARIS}
\email{guillaume.barraquand@math.cnrs.fr} 
\begin{document}
		\begin{abstract}
Stationary measures of last passage percolation with geometric weights and the log-gamma polymer in a strip of the $\mathbb Z^2$ lattice are characterized in \cite{barraquand2023stationary} using variants of Schur and Whittaker processes, called two-layer Gibbs measures. In this article, we prove contour integral formulas characterizing the multipoint  joint distribution of two-layer Schur and Whittaker processes.  We also express them  as Doob transformed Markov processes with explicit transition kernels. As an example of application of our formulas, we compute the growth rate of the KPZ equation on $[0,L]$ with arbitrary boundary parameters.
			\end{abstract}

	\maketitle
	\setcounter{tocdepth}{1}
	\tableofcontents


\section{Introduction}
\label{sec:introduction}
\subsection{Preface}
Integrable random systems defined on unbounded domains often admit particularly simple stationary measures, described  by families of independent and identically distributed  random variables. In finite volume with open boundary conditions, however, stationary measures are typically more complicated,  with spatial correlations and a non-trivial dependence on boundary conditions.
The archetype  example is the asymmetric simple exclusion process on $\ell$ sites, connected to boundary reservoirs (open ASEP). Computing its stationary measure boils down to finding the principal eigenvector of a rather simple Markov matrix, indexed by elements of the state space $\lbrace 0,1\rbrace^\ell$. But this is not easy. In 1993, \cite{derrida1993exact} introduced an elegant method, called the matrix product ansatz, to determine the stationary measure of open ASEP. This method is remarkably powerful and has been extended in various directions over the years. Yet, the matrix product ansatz  becomes increasingly difficult to apply on models with a larger state space, for instance  a state space such as $\mathbb Z^\ell$ or $\R^\ell$. 

\medskip 
In order to characterize stationary measures of geometric last passage percolation and the log-gamma polymer in a strip of the $\mathbb Z^2$ lattice (i.e. paths are constrained to stay between two walls), another method was introduced in \cite{barraquand2023stationary}. It relies on connections between these models and families of symmetric functions, namely the Schur functions for last passage percolation, and Whittaker functions for the log-gamma polymer model. 

\medskip
Stationary measures of out of equilibrium systems in finite domains are typically not expected to be Gibbs measures. Yet the method of  \cite{barraquand2023stationary} expresses them as marginals of a Gibbs measure on a larger graph. The structure of this graph and the expression of the Boltzmann  weights are closely related  to the branching rule satisfied by the corresponding family of symmetric functions. These measures, called two-layer Gibbs measures in \cite{barraquand2023stationary},  can also be seen as variants of Schur or Whittaker processes \cite{okounkov2003correlation, borodin2005eynard,  corwin2014tropical, borodin2014macdonald, betea2018free}. While the matrix product ansatz characterizes stationary measures through formulas, two-layer Gibbs measures yield directly a probabilistic description, which is suitable to taking scaling limits, notably to the Kardar-Parisi-Zhang (KPZ) equation. The goal of the present paper is to complement this qualitative and probabilistic description  with more quantitative information.  

\medskip 
Our results are very similar for the two models: geometric last passage percolation and the log-gamma polymer in a strip (defined in Section \ref{sec:defmodels}). 
\begin{enumerate}
	\item We prove explicit contour integral formulas for the multipoint Laplace transform of the stationary process. Our results have a similar form as formulas for the stationary measures   of open ASEP \cite{bryc2017asymmetric} and  the open KPZ equation \cite{corwin2021stationary}.
	\item We provide alternative descriptions of the two-layer Gibbs measures introduced in \cite{barraquand2023stationary} in terms of explicit Markov processes with  well-chosen initial condition. These results may be seen as discrete analogues of \cite{bryc2021markov} (they also have similarities with \cite{das2024convergence}).
\end{enumerate}

\medskip 

 We describe in  Section \ref{sec:KPZ} an application of our exact formulas to the KPZ equation. Assuming the convergence of the log-gamma polymer free energy on a strip to the open KPZ equation (a precise statement is conjectured in \cite{barraquand2023stationary}), we obtain that the stationary solution $h(t,x)$ of the KPZ equation on $[0,L]$ with boundary parameters $u,v\in \mathbb R$ is such that 
$$ \mathbb E\left[h(t,0)\right] = t\left( \frac{-1}{24}+ \frac{1}{2} \partial_L \log \mathcal Z_{u,v}(L) \right) ,$$
where
$$
\mathcal Z_{u,v}(L) = \int_{\I\R}\frac{dz}{2\I\pi} \left\vert \frac{\Gamma(u+ z)\Gamma(v+ z) }{\Gamma(2z)}\right\vert^2 \frac{e^{z^2 L}}{2}.
$$
The formulas above hold only when $u,v>0$ but they may be  analytically continued to study other values of  $u$ or $v$. In the large $L$ limit, we obtain a phase transition with three phases (Corollary \ref{cor:phasetransition}). The phase diagram is of course the same as that of open ASEP \cite{derrida1993exact}, as predicted by universality. 

\bigskip

As an  application of the Markovian description, we discuss limits of stationary measures on a finite domain as the size of the domain goes to infinity.  As shown in \cite{bryc2021markov2} in the context of the open KPZ equation, a Markovian description is particularly convenient for such task. We prove that two-layer Schur and Whittaker processes can be seen as Markov processes, more precisely Doob transforms of killed random walks.   We establish  exact formulas for the transition kernels, and explain how they can be used to study the convergence of stationary measures on a strip as the width of the strip goes to infinity.

\bigskip 
The method of \cite{barraquand2023stationary} is expected to apply as well to other models related to other families of symmetric functions. While we restrict here to the cases studied in \cite{barraquand2023stationary}, we believe that the methods used in the present paper could be adapted to obtain integral formulas for two-layer Gibbs measures related to other families of symmetric functions such as (spin)-$q$ Whittaker and (spin)-Hall-Littlewood functions. 

\subsection{Definitions of the models}
\label{sec:defmodels}
We consider stochastic growth models defined on a strip of the $\Z^2$ lattice denoted  $\Zstrip=\left\lbrace (i,j)\in \Z^2; j\leq i\leq j+N\right\rbrace$. 
\begin{figure}[h]
	\begin{center}
	\begin{tikzpicture}[scale=0.7]
		\begin{scope} 
			\clip (0,0) -- (6,0) -- (11.5,5.5) -- (5.5,5.5) -- cycle;
			\draw[gray] (0,0) grid (12,12);
		\end{scope}
		\draw[gray, ultra thick] (0,0) node[anchor=north east] {$(0,0)$}   -- (6,0);
		\draw (4,0) node[below] {$(i,0)$};
		\draw[ultra thick] (4,0) -- (4,2) -- (6,2) -- (6,4) -- (7,4);
		\draw (7,4) node[above right] {$(n,m)$}; 
		\draw (6,2) node[below right] {$\pi$}; 
	\end{tikzpicture}
\end{center}
\caption{The subset $\Zstrip$ of the $\mathbb Z^2$ lattice. In black is shown an example of up-right path as used in the definitions of last passage percolation and the log-gamma polymer in the strip.}
\label{fig:strip}
\end{figure}

\begin{definition}[Last passage percolation on the strip] Let $a\in (0,1)$ and $c_1,c_2\geq 0$ such that $ac_1, ac_2<1$. We associate a family of independent random variables $w_{i,j}$ to the vertices of $\Zstrip$. We assume that in the bulk of the strip, i.e.  for $j<i<j+N$, $w_{i,j} \sim \Geom(a^2)$, while on the left boundary $w_{i,i}\sim \Geom(ac_1)$ and on the right boundary, $w_{j+N,j}\sim \Geom(ac_2)$. 
For a given initial condition $G_0: \llbracket 0,N\rrbracket \to \R$ with $G(0)=0$, we set $G(i,0)=G_0(i)$ and define 
\begin{equation}
G(n,m) = \max_{1\leq i\leq N} \left\lbrace G_0(i) + \max_{\pi:(i,1)\to (n,m)} \sum_{(i,j)\in \pi} w_{i,j} \right\rbrace,
\end{equation}
where the inner maximum runs over up-right paths $\pi$ from $(i,1)$ to $(n,m)$, as shown in Fig. \ref{fig:strip}. 
We may then define the Markov process $G_m=\left(G_m(i)\right)_{1\leq i\leq N}$ on $\mathbb Z^{N}$  by setting  
$$G_m(i)=G(m+i,m)-G(m,m)\text{ for }1\leq i\leq N.$$
We say that a probability measure $\PP^{\rm stat}$ on $\Z^N$ is a stationary measure for LPP on the strip if $G_0\sim \PP^{\rm stat}$ implies that $G_m\sim\PP^{\rm stat}$ for all $m\geq 0$.   
\label{def:LPP}
\end{definition}

\begin{definition}[Log-gamma polymer model] Let $\alpha>0$ and $u,v\in \R$ such that $\alpha+u, \alpha+v>0$. We consider another family of independent random variables $\omega_{i,j}$ such that for $j<i<j+N$, $\omega_{i,j} \sim \Gammainv(2\alpha)$, on the left boundary $\omega_{i,i}\sim \Gammainv(\alpha+u)$
	 and on the right boundary, $\omega_{j+N,j}\sim \Gammainv(\alpha+v)$ (here, $\Gammainv(\theta)$ denotes the inverse Gamma distribution with shape parameter $\theta$). 
	For a given initial condition $H_0: \llbracket 0,N\rrbracket \to \R$ with $H(0)=0$, we set $H(i,0)=H_0(i)$ and define 
	\begin{equation}
		H(n,m) = \log\left( \sum_{1\leq i\leq N} e^{ H_0(i)}  \sum_{\pi:(i,1)\to (n,m)} \prod_{(i,j)\in \pi} w_{i,j} \right).
	\end{equation}
	We may then define the Markov process $H_m=\left(H_m(i)\right)_{1\leq i\leq N}$ on $\mathbb R^{N}$ as before by setting  
	$$H_m(i)=H(m+i,m)-H(m,m)\text{ for }1\leq i\leq N$$
	 and consider its stationary measures. 
	\label{def:loggamma}
\end{definition}

\begin{remark}
One can define both models in a more general way, by imposing an initial condition on an arbitrary down-right path joining both sides of the strip, instead of the horizontal path in Fig. \ref{fig:strip}. One may then consider a more general notion of stationary measure for each shape of down right path. We refer to \cite{barraquand2023stationary} for definitions and results in this more general setting. See also Definition \ref{def:generaltwolayerSchur} and Definition \ref{def:generaltwolayerWhittaker} below. We believe that all the methods used in the present paper are  applicable to this more general setting, though notations and formulas would become more involved. This is why we decided to restrict to horizontal paths for most results (with the exception of Proposition \ref{prop:Markoviangeneral} and Proposition \ref{prop:MarkoviangeneralWhittaker}).  
\end{remark}
\begin{remark}
One can also define more general models depending on a family of inhomogeneity parameters $a_1, \dots a_N$ in the LPP case, and $\alpha_1, \dots, \alpha_N$ in the log-gamma case. Our results in Sections \ref{sec:Schur} and Section \ref{sec:Whittaker} are actually stated in this more general setting, though in this introduction, we present only results for the homogeneous case where $a_i\equiv a$ and $\alpha_i\equiv \alpha$. 
\end{remark}

\subsection{Probabilistic description of stationary measures}
For both Markov processes $G_m $ and $H_m$, and for any choice of parameters, an explicit description of their unique ergodic stationary measure is given in  \cite{barraquand2023stationary}. We recall these results in the next two propositions.  
\begin{definition} Under the same assumptions on parameters $a, c_1, c_2$ as in Definition \ref{def:LPP}, 
	let us define a probability distribution on couples of integer-valued random walks $\mathbf L_1 = \big(L_1(i)\big)_{0\leq i\leq N}$ and $\mathbf L_2 = \big(L_2(i)\big)_{0\leq i\leq N}$ by 
	\begin{equation}
		\PP^{a,c_1,c_2}_{\rm Geo}(\mathbf L_1,\mathbf L_2)= \frac{1}{\mathcal K^{a,c_1,c_2}_{\rm Geo}}\,\,(c_1c_2)^{-\mathbf L_1 \otimes \mathbf L_2(N)}   \,\, \PP_{\rm Geo}^{ac_2}(\mathbf L_1)\,  \PP_{\rm Geo}^{ac_1}(\mathbf L_2),
		\label{eq:defstationarymeasureLPPintro}
	\end{equation}
	where the reference measure $\PP_{\rm Geo}^{q}$ is the law of geometric random walks with parameter $q$, starting from $L_1(0)=L_2(0)=0$, i.e. 
	\begin{equation*}
		\PP_{\rm Geo}^{q}(\mathbf L) = \prod_{i=1}^N q^{\mathbf L(i)-\mathbf L(i-1)}(1-q),
	\end{equation*}
	and the composition $\mathbf L_1\otimes \mathbf L_2$ is the discrete Pitman transform  
	$$ \mathbf L_1\otimes \mathbf L_2 (k) =  \min_{1\leq j\leq k} \left\lbrace L_1(j-1)+ L_2(k)-L_2(j)\right\rbrace.$$ 
	\label{def:stationaryGeo}
\end{definition}
\begin{proposition}[{\cite[Theorem 1.3]{barraquand2023stationary}}]
The marginal distribution of $\mathbf L_1$ under the probability measure $\PP^{a,c_1,c_2}_{\rm Geo}$ is the unique stationary measure of the process $G_m$ from Definition \ref{def:LPP}. 
\end{proposition}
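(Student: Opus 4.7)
The plan is to identify $\PP^{a,c_1,c_2}_{\rm Geo}$ as a marginal of a Schur-type measure on a zigzag profile, realize the one-step LPP column update as an RSK-driven Markov kernel on pairs $(\mathbf L_1,\mathbf L_2)$ that preserves this measure via the Cauchy identity, and finally conclude by ergodicity of the increment chain $G_m$.

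First, I would rely on the classical combinatorial identity that, for two integer-valued paths starting from zero, $\mathbf L_1\otimes\mathbf L_2(N)$ equals the length of the second row of the RSK output shape obtained from the $2\times N$ array of increments of $\mathbf L_1,\mathbf L_2$. Combined with the product form of the reference measures $\PP^{ac_2}_{\rm Geo}(\mathbf L_1)$ and $\PP^{ac_1}_{\rm Geo}(\mathbf L_2)$, this rewrites
\begin{equation*}
(c_1c_2)^{-\mathbf L_1\otimes\mathbf L_2(N)}\,\PP_{\rm Geo}^{ac_2}(\mathbf L_1)\,\PP_{\rm Geo}^{ac_1}(\mathbf L_2)
\end{equation*}
as, up to the normalization $\mathcal K^{a,c_1,c_2}_{\rm Geo}$, the projection onto the top profile of a Schur process on the appropriate zigzag path, with bulk specialization in $a$ and two boundary specializations in $c_1$ and $c_2$. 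The Pitman transform plays the role of the shape functional that couples the two layers to the Schur weight.

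Next, I would lift the LPP dynamics $G_m \to G_{m+1}$ to a Markov kernel on the pair $(\mathbf L_1,\mathbf L_2)$ using RSK. A one-step update inserts a new column of weights into the strip: one Geom$(ac_1)$ on the left wall, $N-1$ Geom$(a^2)$ in the bulk, and one Geom$(ac_2)$ on the right wall. By the Cauchy identity $\sum_\lambda s_\lambda(\mathbf x)s_\lambda(\mathbf y)=\prod_{i,j}(1-x_iy_j)^{-1}$ applied to the Schur-process formulation, the updated law is again a two-layer measure of the form \eqref{eq:defstationarymeasureLPPintro}: the bulk specialization is augmented by one additional variable $a$ (which is absorbed after a shift of indices corresponding to moving the horizontal boundary up by one), while the boundary variables $c_1,c_2$ are matched by the insertion of the boundary Geom$(ac_1)$ and Geom$(ac_2)$ weights. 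Projecting onto $\mathbf L_1$ yields the invariance of its marginal.

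Finally, uniqueness follows from standard Markov-chain arguments: the increment chain $G_m$ is irreducible and aperiodic on $\mathbb Z^N$ (for generic parameters), and the existence of the explicit invariant probability measure above rules out other stationary measures. The main obstacle is the combinatorial identification at the first step: verifying that $\mathbf L_1\otimes\mathbf L_2(N)$ is indeed the correct shape functional producing the boundary specializations $c_1,c_2$, and tracking the lifted RSK dynamics so that the joint measure on $(\mathbf L_1,\mathbf L_2)$, and not merely its $\mathbf L_1$-marginal, closes up under the update. This bookkeeping, which matches the Pitman transform in the bulk with the two separate boundary parameters, is the heart of the two-layer construction.
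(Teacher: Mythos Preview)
The paper does not prove this proposition; it is quoted verbatim as \cite[Theorem 1.3]{barraquand2023stationary} with no argument supplied. So there is no ``paper's proof'' to compare against, only the approach of the cited reference, which the present paper summarizes elsewhere (Section~\ref{sec:two-layer-intro} and the proof of Proposition~\ref{prop:Markoviangeneral}).

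Your outline is in the right spirit---realize the measure as a marginal of a two-layer Schur structure and prove invariance via Schur summation identities---but it has a real gap. You invoke only the Cauchy identity. The boundary factors $c_1^{\lambda_1^0-\lambda_2^0}$ and $c_2^{\lambda_1^N-\lambda_2^N}$ (equivalently, the reweighting by $(c_1c_2)^{-\mathbf L_1\otimes\mathbf L_2(N)}$) are \emph{not} handled by Cauchy; they require the Littlewood-type identity \eqref{eq:skewLittlewoodsignatures}. The argument in \cite{barraquand2023stationary}, as echoed in the proof of Proposition~\ref{prop:Markoviangeneral} here, uses both the skew Cauchy identity \eqref{eq:skewCauchysignatures} for bulk steps and the skew Littlewood identity \eqref{eq:skewLittlewoodsignatures} when the update touches a boundary. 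Your phrase ``the boundary variables $c_1,c_2$ are matched by the insertion of the boundary Geom$(ac_1)$ and Geom$(ac_2)$ weights'' is exactly where this second identity is needed, and it is not a consequence of Cauchy.

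A second, smaller issue: the one-step dynamics in \cite{barraquand2023stationary} are not literally an RSK column insertion on a $2\times N$ array. They are local push-forward rules on the two-layer Gibbs configuration (moving the down-right path by elementary steps), and invariance is checked identity-by-identity rather than via a global bijection. Your RSK framing is morally related but would need to be made precise to actually close the argument, particularly at the two boundaries where the geometry of the strip interacts with the reflecting weights.
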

\begin{remark}
Definition \ref{def:stationaryGeo} is a reformulation of \cite[Definition 1.2]{barraquand2023stationary}. We chose this rewriting to make the Pitman transform appear. Following notations from \cite{o2002representation}, we could have defined the composition 
$$ \mathbf L_1 \odot \mathbf L_2(k) = \max_{1\leq j\leq k} \lbrace L_1(j)+L_2(k)-L_2(j-1)\rbrace.$$
The discrete  Pitman transform is then generally defined as the map $(\mathbf L_1, \mathbf L_2) \mapsto (\mathbf L_1\otimes \mathbf L_2,  \mathbf L_2 \odot \mathbf L_1)$. Using  that $\mathbf L_1+\mathbf L_2=\mathbf L_1\otimes \mathbf L_2 + \mathbf L_2 \odot \mathbf L_1$, we could have defined $\PP^{a,c_1,c_2}_{\rm Geo}$ as 
\begin{equation}
	\PP^{a,c_1,c_2}_{\rm Geo}(\mathbf L_1,\mathbf L_2)= \frac{1}{\widetilde{\mathcal K}^{a,c_1,c_2}_{\rm Geo}}\,\,(c_1c_2)^{\mathbf L_2 \odot \mathbf L_1(N)}   \,\, \PP_{\rm Geo}^{a/c_1}(\mathbf L_1)\,  \PP_{\rm Geo}^{a/c_2}(\mathbf L_2),
	\label{eq:defstationarymeasureLPPintro-alternative}
\end{equation}
where $\widetilde{\mathcal K}^{a,c_1,c_2}_{\rm Geo}$ is another normalization constant. Since we do not assume that $a/c_1$ and $a/c_2$ are in $[0,1)$, we prefer using \eqref{eq:defstationarymeasureLPPintro} than  \eqref{eq:defstationarymeasureLPPintro-alternative}.
\end{remark}

We now turn to the log-gamma polymer. 
\begin{definition} Under the same assumptions on parameters as in Definition \ref{def:loggamma}, let us define a probability density for real-valued random walks $\mathbf L_1, \mathbf L_2$ by 
	\begin{equation*}
		\PP^{\alpha,u,v}_{\rm LG}(\mathbf L_1,\mathbf L_2)= \frac{1}{\mathcal K^{\alpha,u,v}_{\rm LG}}\,\,e^{(u+v) \mathbf L_1 \otimes \mathbf L_2(N)}   \PP_{\rm LG}^{\alpha+v}(\mathbf L_1) \PP_{\rm LG}^{\alpha+u}(\mathbf L_2),
	\end{equation*}
	where now the reference measure $\PP_{\rm LG}^{\alpha}$ is the law of log-gamma increment random walk with parameter $\alpha$, starting from $L_1(0)=L_2(0)=0$, i.e. with density 
	\begin{equation}
		\PP_{\rm LG}^{\alpha} (\mathbf L) = \prod_{i=1}^N \frac{1}{\Gamma(\alpha)} \exp\left( -\alpha(\mathbf L(i)-\mathbf L(i-1)) -e^{-(\mathbf L(i)-\mathbf L(i-1))}    \right),
	\end{equation}
	and, with a slight abuse of notations,  the composition $\otimes$ is now the discrete geometric\footnote{Here, geometric refers to the  geometric Brownian motion  appearing in the study of exponential functionals of the Brownian motion, where positive temperature analogues of the Pitman transform where first studied \cite{matsumoto2000analogue}, or geometric random walks appearing in geometric liftings of the RSK correspondence \cite{noumi2004tropical, corwin2014tropical}.} Pitman transform  defined as 
	$$ \mathbf L_1\otimes \mathbf L_2 (k) =  -\log \left( \sum_{j=1}^k e^{-(L_1(j-1)+ L_2(k)-L_2(j))}\right).$$ 
	\label{def:stationaryloggamma}
\end{definition}
\begin{proposition}[{\cite[Theorem 1.6]{barraquand2023stationary}}]
	The marginal distribution of $\mathbf L_1$ under the probability density  $\PP^{\alpha,u,v}_{\rm LG}$ is the unique stationary measure of the process $H_m$ from Definition \ref{def:loggamma}. 
\end{proposition}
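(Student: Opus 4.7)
The plan is to establish invariance of the joint law $\PP^{\alpha,u,v}_{\rm LG}$ on pairs $(\mathbf L_1,\mathbf L_2)$ under a coupling that projects onto the polymer dynamics in its first coordinate. Invariance of the joint law then pushes forward to stationarity of the $\mathbf L_1$-marginal, and uniqueness follows from a standard monotonicity/coupling argument for the log-gamma polymer (attractivity and ergodicity of the bulk dynamics).

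First I would recast the density $\PP^{\alpha,u,v}_{\rm LG}$ as a marginal of a Whittaker process on a doubled/folded version of $\llbracket 0,N\rrbracket$, with the two reference walks $\PP_{\rm LG}^{\alpha+v}$ and $\PP_{\rm LG}^{\alpha+u}$ furnishing the local weights along the two layers and the factor $e^{(u+v)\mathbf L_1\otimes\mathbf L_2(N)}$ providing the coupling at the fold. The geometric Pitman transform is the local update rule of geometric RSK \cite{noumi2004tropical, corwin2014tropical}, so this identification should be a matter of expanding the definition and recognizing the resulting expression as a Whittaker-type partition function with boundary parameters $(\alpha+u,\alpha+v)$.

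Second I would build the Markov coupling. Sample independent bulk weights $\omega_{i,1}\sim\Gammainv(2\alpha)$ for $j<i<j+N$ together with edge weights of parameter $\alpha+u$ and $\alpha+v$ on the two walls, and apply a local update on $(\mathbf L_1,\mathbf L_2)$ compatible with geometric RSK, so that the first marginal evolves according to $H_m$. One then checks via a Burke-type identity (the well-known invariance of products of inverse-gammas under the log-gamma polymer recursion \cite{barraquand2023stationary}) that the new pair $(\mathbf L_1',\mathbf L_2')$ is again distributed according to $\PP^{\alpha,u,v}_{\rm LG}$. At the level of computations this reduces to repeated uses of the beta/gamma algebra underlying the log-gamma polymer's integrability, together with a consistency check at the right boundary where the two layers are glued by the Pitman transform.

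The main obstacle will be the construction in the second step: the density contains the nonlocal quantity $\mathbf L_1\otimes\mathbf L_2(N)$, so no strictly local update on $(\mathbf L_1,\mathbf L_2)$ can preserve its law unless the update is engineered to resolve correctly after taking the logarithm of a sum. Concretely, one has to find the correct two-dimensional analogue of the geometric Pitman transform and prove that Burke's identity propagates through it; this is where the Whittaker-process perspective is essential, since the invariance of Whittaker measures under the Borodin--Corwin-type dynamics encapsulates precisely this nonlocal Burke property in one strike.
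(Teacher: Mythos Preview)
The paper does not prove this proposition at all: it is stated as a citation of \cite[Theorem 1.6]{barraquand2023stationary}, and no argument is given here. So there is no ``paper's own proof'' to compare your attempt against.

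That said, your sketch is broadly aligned with the strategy of the cited work. The identification in your first step is exactly what this paper records in Section~\ref{sec:two-layer-intro}: the measure $\PP^{\alpha,u,v}_{\rm LG}$ is the marginal in $(\lambda_1^x-\lambda_1^0,\lambda_2^x-\lambda_2^0)$ of the two-layer Whittaker process \eqref{eq:introtwolayerWhittaker}, and the paper attributes that equality in law to \cite[Prop.~3.14]{barraquand2023stationary}. Your second step --- building dynamics on the two-layer object that project to the polymer recursion and checking invariance via Burke-type gamma/beta identities --- is indeed the mechanism used in \cite{barraquand2023stationary}. The nonlocality of the factor $e^{(u+v)\mathbf L_1\otimes\mathbf L_2(N)}$ that worries you is precisely what is dissolved by passing to the two-layer Gibbs description: once you work with the sequence $(\lambda^i)_{0\le i\le N}$ and the local weights $\Psi_{\alpha}(\lambda^i/\lambda^{i-1})$, the interaction is local and the invariance check becomes a product of local Cauchy/Littlewood-type identities (the skew-Whittaker analogues of \eqref{eq:skewCauchysignatures}--\eqref{eq:skewLittlewoodsignatures}).

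Where your proposal remains a genuine sketch rather than a proof: you never specify the coupled update on $(\mathbf L_1,\mathbf L_2)$, nor do you verify that its first coordinate reproduces the recursion defining $H_m$, nor do you carry out the invariance computation. ``One then checks via a Burke-type identity'' is the entire content of the theorem; without writing down which identity and how the boundary weights $e^{-u(\lambda_1^0-\lambda_2^0)}$, $e^{-v(\lambda_1^N-\lambda_2^N)}$ are absorbed by it, the argument is not yet a proof. Likewise, your uniqueness claim by ``standard monotonicity/coupling'' needs to be made precise: the log-gamma polymer is not a particle system with an obvious attractive structure, and in \cite{barraquand2023stationary} uniqueness is obtained by a contraction/ergodicity argument specific to the strip geometry that you would need to either reproduce or cite.
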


\subsection{Main results} We start with the case of last passage percolation. 
For integers $0= x_0< x_1< \dots< x_k= N$  the joint distribution of the increments $\mathbf L_1(x_i)-\mathbf L_1(x_{i-1})$ for $1\leq i\leq N$ is characterized by the generating function 
\begin{equation*}
\mathcal L_{\rm Geo}^{a,c_1,c_2}(\mathbf t, \mathbf x):= \mathbb E_{\rm Geo}^{a, c_1, c_2}\left[\prod_{i=1}^k t_i^{2(\mathbf L_1(x_i)-\mathbf L_1(x_{i-1}))} \right].
\end{equation*} 
Let us introduce the notations 
\begin{equation}
	\mathfrak{m}_{\rm Geo}(dz) = \frac{(1-z^2)(1-1/z^2)}{2z} , \;\; \phi(z,a) = \phi_{\rm Geo}(z;a) = \frac{1}{1-z a},
	\label{eq:defmphigeo}
\end{equation}
and define the kernel 
\begin{equation} 
	K_{\rm Geo}(t_1,t_2; z_1,z_2) = \sqrt{1-\left(\frac{t_1}{t_2}\right)^2}\phi_{\rm Geo}(z_2z_1, t_1/t_2)\phi_{\rm Geo}(z_2/z_1, t_1/t_2).
	\label{eq:defKgeo}
\end{equation}
\begin{theorem} Let $a, c_1, c_2$ be parameters as in Definition \ref{def:LPP}. Let $t_1,\dots, t_k$ be real numbers such that  $ c_1<t_1<\dots <t_k<  \frac{1}{c_2}$ and $at_k<1$. For  $c_1,c_2<1$, we have the formula 
		\begin{multline}  	\mathcal L_{\rm Geo}^{a,c_1,c_2}(\mathbf t, \mathbf x) =    \frac{1}{\mathcal Z_{\rm Geo}^{a, c_1,c_2}(N)}  \oint\frac{\mathfrak m_{\rm Geo}(dz_1)}{2\I\pi}\dots  \oint\frac{\mathfrak m_{\rm Geo}(dz_k)}{2\I\pi}
		 \left\vert \prod_{i=1}^k  \phi(z_i, at_i)^{x_i-x_{i-1}} \right\vert^2
		\\ \times 
		\left\vert  \phi(z_1, c_1/t_1)\phi(z_k, c_2 t_k) \prod_{i=1}^{k-1} K_{\rm Geo}(t_i, t_{i+1}; z_i, z_{i+1}) \right\vert^2, 
		\label{eq:Laplacetransformintro}
	\end{multline}
where the integration contours  are circles around $0$ with radius $1$ (for a variable $z$ on such contour, and a rational function $f$ with real coefficients, $\vert f(z)\vert^2=f(z)f(1/z)$, so that the integral in \eqref{eq:Laplacetransformintro} is a complex contour integral in the usual sense, and it can be evaluated by residues). Throughout the paper, all circular contours will be positively oriented.
 The normalization constant can be computed explicitly as 
	\begin{equation}
	\mathcal Z_{\rm Geo}^{a, c_1,c_2}(N) = 	 \oint \mathfrak m_{\rm Geo}(dz) \left\vert  \phi(z, c_1)\phi(z,c_2)(\phi(z,a))^N\right\vert^2.
	\label{eq:noramlizationLPPintro}
\end{equation}

Finally, for any $c_1, c_2$ as in Definition \ref{def:LPP} but not satisfying the assumptions above, if the $t_i$ are such that $ t_1<\dots <t_k$ with $at_k<1$ and $at_k^2 c_2<1$,   the formula \eqref{eq:Laplacetransformintro} still holds after replacing the right-hand side of \eqref{eq:Laplacetransformintro} by its analytic continuation in parameters $c_1$ and/or $c_2$. 
	\label{theo:formulaLPPintro}
\end{theorem}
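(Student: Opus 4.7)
The plan is to express $\mathcal L_{\rm Geo}^{a,c_1,c_2}(\mathbf t,\mathbf x)$ as a ratio of partition functions of two two-layer Gibbs measures and then evaluate the numerator using the Schur-process machinery underlying \cite{barraquand2023stationary}. Writing the telescoping identity
\[ \prod_{i=1}^k t_i^{2(\mathbf L_1(x_i)-\mathbf L_1(x_{i-1}))} = \prod_{i=1}^k \prod_{j=x_{i-1}+1}^{x_i} t_i^{2(L_1(j)-L_1(j-1))}, \]
the insertion of the $t_i$'s can be absorbed into the step distribution of $\mathbf L_1$ by a multiplicative rescaling on the block $(x_{i-1},x_i]$. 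This realizes $\mathcal L_{\rm Geo}^{a,c_1,c_2}(\mathbf t,\mathbf x)$ as a ratio $\mathcal Z(\mathbf t,\mathbf x)/\mathcal Z_{\rm Geo}^{a,c_1,c_2}(N)$, where $\mathcal Z(\mathbf t,\mathbf x)$ is the partition function of a two-layer Gibbs measure with spatially inhomogeneous parameters $a\mapsto at_i$ on the $i$-th block (up to an analogous rescaling of $c_1,c_2$ at the two boundaries that accounts for the left/right factors in \eqref{eq:Laplacetransformintro}).

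To compute $\mathcal Z(\mathbf t,\mathbf x)$, I would unfold the Pitman-transform factor $(c_1c_2)^{-\mathbf L_1\otimes\mathbf L_2(N)}$ using the two-layer Schur-process description of \cite{barraquand2023stationary}: the joint law of $(\mathbf L_1,\mathbf L_2)$ is encoded by a chain of signatures with skew-Schur transition weights, and the Pitman factor corresponds to a $BC$-symmetric boundary weight picking out the first coordinate of the terminal signature. Iterated application of the Cauchy identity for $BC$-symmetric Schur polynomials then produces, on each block of homogeneity, a single contour integral on the unit circle against the Haar-like measure $\mathfrak m_{\rm Geo}(dz)=(1-z^2)(1-z^{-2})/(2z)$. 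The bulk factors $|\phi(z_i,at_i)^{x_i-x_{i-1}}|^2$ come directly from Cauchy summation inside each block; the boundary factors $|\phi(z_1,c_1/t_1)|^2$ and $|\phi(z_k,c_2t_k)|^2$ come from the left and right boundary specializations of the Pitman weight; and the transfer kernel $K_{\rm Geo}(t_i,t_{i+1};z_i,z_{i+1})$, together with its prefactor $\sqrt{1-(t_i/t_{i+1})^2}$, emerges as the matching data between consecutive blocks when two specializations meet at the point $x_i$. Specialising to $k=1$ and $t_1=1$ should reproduce \eqref{eq:noramlizationLPPintro}, providing a consistency check for $\mathcal Z_{\rm Geo}^{a,c_1,c_2}(N)$.

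The main obstacle is carrying out the Cauchy-identity manipulations in the correct $BC$-symmetric setting and checking the pole/contour structure: one must keep track of which zeros of the various $\phi(\cdot, \cdot)$ factors lie inside versus outside the unit circle, and this is what forces the assumption $c_1,c_2<1$ (together with $t_i<1/c_2$ and $c_1<t_1$) in the first part of the statement. For the analytic-continuation statement when $c_1$ or $c_2\geq 1$, I would observe that both sides of \eqref{eq:Laplacetransformintro} depend rationally on $c_1,c_2$, so that once the identity holds on the open domain above it persists under analytic continuation; the right-hand side extends by deforming the contours in $z_1$ and $z_k$ past the poles at $z=c_1^{\pm 1}$ and $z=c_2^{\pm 1}$ and picking up the corresponding residues, and the conditions $at_k<1$ and $at_k^2c_2<1$ are precisely what is needed for the deformation to land on a well-defined contour.
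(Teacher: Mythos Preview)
Your proposal has the right overall shape---pass to the two-layer Schur process and evaluate via Cauchy-type identities---but the key mechanism you describe does not quite work, and you miss the step that actually produces the formula.

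The central gap is your claim that the tilt $\prod_i t_i^{2(L_1(x_i)-L_1(x_{i-1}))}$ can be absorbed into the step distribution of $\mathbf L_1$ to yield the partition function of another two-layer Gibbs measure with block-wise parameters $a\mapsto at_i$. In the two-layer Schur process the transition weight is $s_{\lambda^j/\lambda^{j-1}}(a)$, which is homogeneous in $|\lambda^j|-|\lambda^{j-1}|=\lambda_1^j+\lambda_2^j-\lambda_1^{j-1}-\lambda_2^{j-1}$, while your tilt acts on $\lambda_1$ alone. So the rescaling you describe does not produce another Schur-type weight, and the ``ratio of partition functions'' picture breaks down before any $BC$-symmetric Cauchy identity could be applied. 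The paper handles this asymmetry by first writing each skew Schur function as a contour integral via type-$A$ orthogonality (Lemma~\ref{lem:skewSchur}), and only \emph{then} splitting $t_i^{\mu_1^i}=\sqrt{t_i}^{(\mu_1^i-\mu_2^i)+(\mu_1^i+\mu_2^i)}$ so that one half is absorbed into the Schur arguments by homogeneity and the other half becomes a power of $c=\sqrt{t_i/t_{i+1}}$ weighting $\mu_1^i-\mu_2^i$.

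This last weight is precisely what generates the transfer kernel, and evaluating the resulting sum $\sum_{\mu}s_\mu(x)s_\mu(y)c^{\mu_1-\mu_2}$ is not a Cauchy identity in any standard ($A$ or $BC$) sense; it requires the special $n=2$ identity of Lemma~\ref{lem:specialcomputation}, proved via RSK. Your proposal does not supply any substitute for this step. After these summations one lands on a $2k$-fold integral (Proposition~\ref{prop:formulaSchur}); the single-integral form \eqref{eq:Laplacetransformintro} with the $BC_1$-looking measure $\mathfrak m_{\rm Geo}$ only appears after a further sequential residue reduction at $w_2^i=1/w_1^i$, which is where the hypothesis $c_2<1$ actually enters. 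Finally, for the analytic continuation: the left-hand side is not rational in $c_1,c_2$ but a convergent power series (the normalization itself is transcendental in $c_1,c_2$), so the argument you need is analyticity of that series under the stated bounds on the $t_i$, as in Section~\ref{sec:proofLPP}.
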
	
Theorem \ref{theo:formulaLPPintro} is proved in Section \ref{sec:proofLPP}. In its statement,  and in several times below, we will deal with analytic functions in two  variables. We say that $f(x,y)$ is (real)-analytic in variables  $x,y$, for $(x,y)\in \mathcal D$ for some domain $\mathcal D\subset \R^2$ if for any $(x_0,y_0)\in \mathcal D$, the function $x\mapsto f(x,y_0)$ is analytic at $x_0$ and the function $y\mapsto f(x_0,y)$ is analytic at $y_0$. When we consider the analytic  continuation of a function $f(x,y)$ to a larger domain $\mathcal D'\supset \mathcal D$, we will consider it to be the analytic continuation with respect to one variable first, and the other then. In the cases discussed below, the resulting expression will not depend on the order in which these analytic continuations are performed.

In the simplest case, that is for $k=1$, Theorem \ref{theo:formulaLPPintro} implies that for $c_1,c_2<1$ 	with  $ c_1 <t< \min \lbrace  \frac{1}{c_2}, \frac{1}{a} \rbrace $, 
	\begin{equation*}
		\mathbb E_{\rm Geo}^{a, c_1, c_2}\left[ t^{2\mathbf L_1(N)} \right]=   \frac{1}{\mathcal Z_{\rm Geo}^{a, c_1,c_2}(N)}   \oint \frac{dz}{ 2\I\pi z} \frac{(1-z^2)(1-1/z^2)}{2\left( (1-t a z)(1-t a/ z) \right)^N}  \frac{1}{1-\frac{zc_1}{t}}\frac{1}{1-\frac{c_1}{zt}}\frac{1}{1-\frac{c_2t}{z}}\frac{1}{1-c_2tz}. 
	\end{equation*}

We now state an analogous result for the log-gamma polymer stationary measure, more precisely for the joint Laplace transform of its increments 
\begin{equation*}
	\mathcal L_{\rm LG}^{\alpha, u,v}(\mathbf t, \mathbf x) = \mathbb E_{\rm LG}^{\alpha, u, v}\left[\prod_{i=1}^k e^{-2 t_i(\mathbf L_1(x_i)-\mathbf L_1(x_{i-1}))} \right].
\end{equation*}
Let us redefine the function $\phi$ as 
$$\phi(z, \alpha) = \phi_{\rm LG}(z, \alpha) = \Gamma(z+\alpha), $$ and use the notations 
$$ \mathfrak m_{\rm LG}(dz) =  \frac{1}{2\Gamma(2z)\Gamma(-2z)}, \,\,\, K_{\rm LG}(t_1, t_{2}; z_1, z_{2})  = \frac{\phi(z_2+z_1; t_1-t_2)\phi(z_2+z_1; t_1-t_2)}{\sqrt{\Gamma(2t_1-2t_2)}}.$$   
\begin{theorem} 
	Let $\alpha, u,v$ be parameters as in Definition \ref{def:loggamma}. 
	   Let $t_1, \dots, t_k$ be real numbers such that $u>t_1>\dots >t_k>-v$ and $\alpha+t_k>0$. For $u,v>0$, we have
	   \begin{multline}  	\mathcal L_{\rm LG}^{\alpha, u,v}(\mathbf t, \mathbf x) =   \frac{1}{\mathcal Z_{\rm LG}^{\alpha, u,v}(N)} \int_{\I\R}\frac{\mathfrak m_{\rm LG}(dz_1)}{2\I\pi} \dots  \int_{\I\R}\frac{\mathfrak m_{\rm LG}(dz_k)}{2\I\pi}  
	   	\left\vert \prod_{i=1}^k  \phi(z_i, \alpha + t_i)^{x_i-x_{i-1}} \right\vert^2
	   	\\ \times 
	   	\left\vert  \phi(z_1, u-t_1)\phi(z_k, v+t_k) \prod_{i=1}^{k-1} K_{\rm LG}(t_i, t_{i+1}; z_i, z_{i+1}) \right\vert^2,
	   	\label{eq:LaplacetransformWhittakerintro}
	   \end{multline}
	   where we recall that for $z\in \I\R$, $\vert f(z)\vert^2 = f(z)f(-z)$. 
	   
  The normalization constant is given by 
	   \begin{align}
	   	\mathcal Z_{\rm LG}^{\alpha, u,v}(N) &= \int_{\I\R} \frac{\mathfrak m_{\rm LG}(dz)}{2\I\pi} \left\vert  \phi(z, c_1)\phi(z,c_2)(\phi(z,a))^N\right\vert^2 \nonumber\\ 
	   	&=\int_{\I\R}\frac{dz}{2\I\pi}\frac{\Gamma(u\pm z)\Gamma(v\pm z) \left(\Gamma(\alpha\pm z)\right)^N}{2\Gamma(\pm 2z)}.
	   	\label{eq:normalizationloggammaintro}
	   \end{align}
	   In the second line, we use the notational convention that each time that $\pm$ appears in the formula, we take the product of the Gamma function with a plus sign and the Gamma function with a minus sign.

Moreover, for any  $u,v$ such that $u+\alpha, v+\alpha>0$, if we assume that $t_1>\dots >t_k$ with $2t_k>-\alpha+ \max\lbrace \vert u\vert , \vert v\vert\rbrace$, then  the formula \eqref{eq:Laplacetransformintro} still holds after replacing the right-hand side of \eqref{eq:LaplacetransformWhittakerintro} by its analytic continuation in parameters $u$ and/or $v$. 
	\label{theo:formulaloggammaintro}
\end{theorem}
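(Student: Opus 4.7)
The plan is to reduce the multipoint Laplace transform to the total mass of a tilted, inhomogeneous two-layer measure, evaluate that total mass via Whittaker function identities, and finally justify the analytic continuation in $u,v$.

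On each subinterval $[x_{i-1}, x_i]$ the factor $\prod_{j=x_{i-1}+1}^{x_i} e^{-2t_i(L_1(j)-L_1(j-1))}$ multiplicatively biases every log-gamma increment of $\mathbf L_1$; since
$$e^{-2t \Delta L}\cdot \Gamma(\alpha+v)^{-1} e^{-(\alpha+v)\Delta L - e^{-\Delta L}} = \frac{\Gamma(\alpha+v+2t)}{\Gamma(\alpha+v)} \cdot \Gamma(\alpha+v+2t)^{-1} e^{-(\alpha+v+2t)\Delta L - e^{-\Delta L}},$$
the effect is to replace the reference parameter $\alpha+v$ by $\alpha+v+2t_i$ on the $i$-th subinterval at the cost of a factor $(\Gamma(\alpha+v+2t_i)/\Gamma(\alpha+v))^{x_i-x_{i-1}}$. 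Writing $\widetilde{\mathcal K}(\mathbf t, \mathbf x)$ for the total mass of the resulting piecewise-inhomogeneous two-layer measure (with the same $\mathbf L_2$-marginal and the same Pitman factor $e^{(u+v)\mathbf L_1\otimes \mathbf L_2(N)}$), this yields
$$\mathcal L_{\rm LG}^{\alpha,u,v}(\mathbf t, \mathbf x) = \frac{1}{\mathcal K_{\rm LG}^{\alpha,u,v}}\, \prod_{i=1}^{k} \left(\frac{\Gamma(\alpha+v+2t_i)}{\Gamma(\alpha+v)}\right)^{x_i-x_{i-1}}\, \widetilde{\mathcal K}(\mathbf t, \mathbf x),$$
reducing the problem to an explicit evaluation of $\widetilde{\mathcal K}$.

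The key step uses the fact that the two-layer log-gamma measure is built from the branching rule for class-one $GL_N$-Whittaker functions: summing $\mathbf L_2$ against the Pitman factor produces a Whittaker-type function of $\mathbf L_1$, and integrating against the piecewise log-gamma density of $\mathbf L_1$ is evaluated by the Whittaker Plancherel theorem. Applying Plancherel one subinterval at a time introduces a spectral variable $z_i$ per subinterval, and after shifting contours appropriately and collecting Gamma-function factors, one recovers the structure of the right-hand side of \eqref{eq:LaplacetransformWhittakerintro}: the Sklyanin-type measure $\mathfrak m_{\rm LG}(dz_i)$, the bulk factors $|\phi_{\rm LG}(z_i, \alpha + t_i)|^{2(x_i-x_{i-1})}$, the boundary contributions $|\phi_{\rm LG}(z_1, u-t_1)|^2$ and $|\phi_{\rm LG}(z_k, v+t_k)|^2$, and the gluing kernel $K_{\rm LG}(t_i, t_{i+1}; z_i, z_{i+1})$ at each interior junction $x_i$, which arises from the stitching of two consecutive Plancherel pieces across a change of specialization $t_i \to t_{i+1}$. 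The Gamma prefactors produced by the tilt cancel against their counterparts inside the evaluation, and the denominator $\mathcal K_{\rm LG}^{\alpha,u,v}$ is recognised as $\mathcal Z_{\rm LG}^{\alpha, u, v}(N)$, which is nothing but the $k=0$ case of the same identity, namely \eqref{eq:normalizationloggammaintro}.

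Under the hypothesis $u > t_1 > \dots > t_k > -v$, the left poles of every $\Gamma$-factor in the integrand lie strictly to the left of the imaginary axis and their mirror images strictly to the right, which both guarantees absolute convergence of the multiple integral and justifies the manipulations above. For $u,v$ outside this range but still satisfying $u+\alpha, v+\alpha>0$, one either deforms each contour across the poles that have crossed $\I\R$ and keeps track of residues, or equivalently extends the identity by meromorphy jointly in $u$ and $v$; the quantitative bound $2t_k > -\alpha + \max\{|u|, |v|\}$ is exactly what prevents further singularities from reaching the contours during the extension. The main obstacle in this program is the identification of the gluing kernel $K_{\rm LG}$ at each interior junction: it reduces to a four-Gamma Mellin-Barnes identity whose combinatorics must be tracked with care so that all auxiliary Gamma factors cancel against those generated by the tilting step, producing the compact factorised form stated in \eqref{eq:LaplacetransformWhittakerintro}.
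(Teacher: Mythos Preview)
Your proposal is not a proof but a high-level sketch in which the actual technical content is left as a black box. The tilting step is fine and corresponds to the change of variables $z^i\mapsto z^i-t_i/2$ that the paper performs later, but everything after that is unsubstantiated. In particular, the sentence ``summing $\mathbf L_2$ against the Pitman factor produces a Whittaker-type function of $\mathbf L_1$'' is not a known identity: the Pitman factor $e^{(u+v)\mathbf L_1\otimes\mathbf L_2(N)}$ couples the two walks through a nonlinear functional of the full paths, and integrating it out does not produce a single $GL_N$-Whittaker function of $\mathbf L_1$. The passage from the $(\mathbf L_1,\mathbf L_2)$ description to Whittaker functions goes through the two-layer Whittaker process \eqref{eq:introtwolayerWhittaker}, after which the relevant building blocks are \emph{skew} $\mathfrak{gl}_2$-Whittaker functions $\Psi^{(2)}_{\boldsymbol\alpha}(\mu/\lambda)$, not a single $GL_N$ function. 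Your phrase ``applying Plancherel one subinterval at a time'' does not correspond to any concrete calculation either: the Plancherel formula integrates $\Psi_z(x)\overline{\Psi_w(x)}$ over $x$, not over random-walk paths against piecewise log-gamma densities.

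The paper's proof does work subinterval by subinterval, but the mechanism is different and requires two nontrivial identities that are proved from scratch. First one replaces each skew Whittaker function by the integral representation of Lemma~\ref{lem:integralWhittaker}, which is what introduces the spectral variables $z^i$. Then the integrations over the intermediate points $\mu^1,\dots,\mu^{k-1}$ are carried out using Lemma~\ref{lem:deltaCauchy}, a new $\delta$-type identity coming from Lemma~\ref{lem:specialcomputationWhittaker} (a two-variable Whittaker analogue of a Cauchy identity, proved via geometric RSK) in the limit $s\to 0$; this is exactly where the gluing kernel $K_{\rm LG}$ and the constraint $z_1^i+z_2^i=0$ emerge. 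The final integration over $\mu^k$ uses the analogous Littlewood-type $\delta$-identity, Lemma~\ref{lem:deltaLitllewood}. None of these steps are instances of ``the Whittaker Plancherel theorem'', and your proposal gives no indication of how the four-Gamma structure of $K_{\rm LG}$ actually appears. Until you either reproduce these lemmas or give a genuinely independent route to the intermediate integrations, the argument has a gap at its central step.
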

Theorem \ref{theo:formulaloggammaintro} is proved in Section \ref{sec:proofWhittaker}. To see why \eqref{eq:normalizationloggammaintro} is a convergent integral, one can use the fact that Gamma factors decay exponentially as  $\vert \Gamma(a+\I x)\vert \leq  Ce^{-\pi \vert x\vert /2} $ for fixed $a>0$,  so that the integral is convergent as long as $N\geq 1$.  In the simplest case, that is when  $k=1$, Theorem \ref{theo:formulaloggammaintro} implies that  for $u>t>\max\lbrace -v, -\alpha\rbrace$, 
\begin{equation}  	\mathbb E_{\rm LG}^{\alpha, u,v}\left[ e^{-2 t \mathbf L_1(N)} \right] =   \frac{1}{\mathcal Z_{\rm LG}^{\alpha, u,v}(N)} \int_{\I\R}\frac{dz}{2\I\pi} \frac{\Gamma(u-t\pm z)\Gamma(v+t\pm z)\left(\Gamma(\alpha+t\pm z) \right)^N}{2\Gamma(2z)\Gamma(-2z)}.
	\label{eq:Laplacetransformspecialcaseintro} 
\end{equation}

\begin{remark} Using the change of variables $z_i=e^{\I \pi\theta_i}$ with $ \theta_i\in (-1,1]$ in \eqref{eq:Laplacetransformintro}  and interpreting the $\theta_i$ as successive values of a Markov process, the Laplace transform in \eqref{eq:Laplacetransformintro} can be rewritten in terms of the free\footnote{here, the adjective free refers to the fact that it is the $q=0$ case of the more general Askey-Wilson process, as the free bi-Poisson process from \cite{bryc2007bi} is a $q=0$ case of more general bi-Poisson processes \cite{bryc2008bi}.} Askey-Wilson process. Askey-Wilson processes are stochastic processes taking values in $[-1,1]$,  introduced in \cite{bryc2010askey}  in the context of quadratic harnesses \cite{bryc2007quadratic}\footnote{Strictly speaking, Askey-Wilson processes take values in $[-1,1]$ only for range of times and parameters such  their transition density has no atoms. In general, other values  may arise.}. Their transition density is expressed in terms of the orthogonality measure for Askey-Wilson orthogonal polynomials, hence the name. Askey-Wilson processes also arises in Laplace transform formulas characterizing the stationary measure of open ASEP \cite{bryc2017asymmetric}. The limit of these formulas when ASEP scales to the KPZ equation was studied in \cite{corwin2021stationary}, which led to study a limit of Askey-Wilson processes called continuous dual Hahn processes, also considered in \cite{bryc2012quadratic} (see also \cite{bryc2022continuous}). 
Letting $z_i=\pm \I \sqrt{x_i}/2$ in \eqref{eq:LaplacetransformWhittakerintro}, we also notice that the formula \eqref{eq:LaplacetransformWhittakerintro} can be rewritten as a functional of the continuous dual Hahn process. We believe that the relations between two-layer Gibbs measures and Askey-Wilson processes deserve to be studied more systematically. 
\label{rem:AskeyWilson}
\end{remark}
\begin{remark}
	Besides the matrix product ansatz and the method of \cite{barraquand2023stationary},  there exist other methods to compute stationary measures  of finite volume models on various state spaces. There exists  notably a body of work using Markov duality to compute stationary measures for a variety of symmetric models  \cite{frassek2022exact, carinci2024solvable, giardina2024intertwining}.
	It would be interesting to study whether such Markov dualities can be related to the type of dual processes discussed in Remark \ref{rem:AskeyWilson}. Let us also mention that for integrable models in a  half-space or on a ring, there exist yet other methods to compute the stationary measures, based on symmetries with respect to permuting inhomogeneity parameters \cite{barraquand2023stationaryloggamma, aggarwal2023colored} or other ideas \cite{he2023periodic, corwin2024periodic}. 
\end{remark}
\begin{remark}
	In the course of proving Theorem \ref{theo:formulaLPPintro} and Theorem \ref{theo:formulaloggammaintro}, we encountered seemingly new summation (resp. integral) identities for Schur (resp. Whittaker) functions: Lemma \ref{lem:specialcomputation} and Lemma \ref{lem:specialcomputationWhittaker}. Presently, these identities are valid only for signatures of length $n=2$,  and it is not clear if they admit interesting generalizations to arbitrary $n$. 
\end{remark}

\subsection{Two-layer Schur and Whittaker processes}
\label{sec:two-layer-intro}
Our results are based on another description of stationary processes from Definition \ref{def:stationaryGeo} and Definition \ref{def:stationaryloggamma}: the two-layer Schur and Whittaker processes. These probability measures are introduced in \cite{barraquand2023stationary} as Gibbs measures defined on certain graphs. For the purposes of the present paper, it is more convenient to define them in terms of Schur and Whittaker functions, and we will not use the Gibbsian definition. The definitions given in the next paragraphs are special cases of Definition \ref{def:generaltwolayerSchur} and Definition \ref{def:generaltwolayerWhittaker}.

\subsubsection{Two-layer Schur process}
Consider an integer $N\geq 1$, and parameters $a, c_1,c_2$ as in Definition \ref{def:LPP} with the additional assumption that $c_1c_2<1$. Let $\Sign_2$ denote the set of $(\la_1,\la_2)\in\mathbb Z^2$ with $\la_1\geq \la_2$. Define a probability measure on sequences  $\boldsymbol\lambda=(\lambda^{0}, \dots, \lambda^{N})$ of elements $\lambda^i\in  \Sign_2$ such that
\begin{equation}
	\mathbb P(\boldsymbol{\lambda}) = \frac{1}{\mathcal Z_{\rm Geo}^{a,c_1,c_2}(N)} c_1^{\lambda_1^0-\lambda_2^0}c_2^{\lambda_1^N-\lambda_2^N} \prod_{i=1}^N s_{\lambda^i/\lambda^{i-1} }(a)\;\; \mathds{1}_{\lambda_2^0=0},
	\label{eq:introtwolyaerSchur}
\end{equation} 
where $s_{\lambda/\mu}$ is a skew Schur function defined below in \eqref{eq:defskewSchur}. We provide some background on Schur functions in  Section \ref{sec:Schurbackground}. Then, as proved in \cite[Prop. 2.17]{barraquand2023stationary}, we have the equality in law 
\begin{equation}
	\left( L_1(x), L_2(x)\right)_{0\leq x\leq N} \overset{(d)}{=} \left( \la_1^x-\la_1^0, \la_2^x-\la_2^0 \right)_{0\leq x\leq N}, 
	\label{eq:twodescriptions}
\end{equation}
where $\mathbf L_1, \mathbf L_2$ are distributed as in Definition \ref{def:stationaryGeo} (the  equality is shown by explicitly averaging over $\la_1^0$ in \eqref{eq:introtwolyaerSchur}).

\subsubsection{Two-layer Whittaker process} 
Consider an integer $N\geq 1$, and parameters $\alpha,u,v$ as in Definition \ref{def:loggamma} with the additional assumption that $u+v>0$. We consider the probability measure on sequences  $\boldsymbol\lambda=(\lambda^{0}, \dots, \lambda^{N}) \in (\R^2)^{N+1}$ with density 
\begin{equation}
	\mathbb P(\boldsymbol{\lambda}) = \frac{1}{\mathcal Z^{\alpha, u,v}_{\rm LG}(N)} e^{-u(\lambda_1^0-\lambda_2^0)}e^{-v(\lambda_1^N-\lambda_2^N)} \prod_{i=1}^N \Psi_{\alpha}(\lambda^i/\lambda^{i-1}) \delta_0(\lambda_2^0).
	\label{eq:introtwolayerWhittaker}
\end{equation} 
where for $\lambda, \mu \in \mathbb R^2$ and $(\alpha_1, \dots, \alpha_k)\in \mathbb C^k$, $\Psi_{\alpha_1,\dots, \alpha_k}(\lambda/\mu)$ is a skew Whittaker function defined below in \eqref{eq:defskewWhittaker}. In the equation above, $\delta_0$ denotes a Dirac mass at zero, so that thoughout the paper, for both models, we always have $\lambda_2^0=0$.
Then, similarly as for two-layer Schur processes,  we have again an equality in law of the form  \eqref{eq:twodescriptions} where now, $\mathbf L_1, \mathbf L_2$ are distributed as in Definition \ref{def:stationaryloggamma}  (this is proved in \cite[Prop. 3.14]{barraquand2023stationary}).

\subsection{Markovian description of stationary measures}
The stationary measure for the open KPZ equation can  be described in terms of the increments of a Markov process with a specific initial condition \cite{bryc2021markov}. The transition probabilities of the Markov process involved is somewhat complicated, but this point of view is quite useful when studying various type of limits (large scale limit, limit from the interval to $\mathbb R_+$, limit from discrete processes to continuous ones, etc, see for instance \cite{bryc2021markov2, bryc2024limit, bryc2024limits, bryc2023pitman}). 

In this section, we explain how the probability measures  from Section \ref{sec:two-layer-intro} can be alternatively viewed as Markov processes. Our argument is inspired by earlier results: on the one hand  the results of \cite{bryc2021markov} expressing the open KPZ stationary measure in terms of a Doob transform of a Brownian motion killed at a certain rate, and on the other hand the results  of \cite{das2024convergence} which relate certain two-layer Gibbs measures (very similar with our two-layer Whittaker process) and Doob-transformed processes. We will show that the technical tools developed in this article allow to provide fully explicit expressions for the transition probabilities of the Doob-transformed Markov processes involved. 

These explicit expressions can in turn be studied as $N$ goes to infinity, or in other asymptotic regimes. This may be used to show that as $N$ goes to infinity, the stationary measures of last passage percolation or the log-gamma polymer in a strip converge to stationary measures of half-space last passage percolation or log-gamma polymer. We will not prove this convergence in general but only sketch  how this can be done for some range of parameters. We believe that this question should be addressed in a more general setting, i.e. for the full range of boundary parameters, and not only for stationary measures on a horizontal path (see the more general Definitions \ref{def:generaltwolayerSchur} and \ref{def:generaltwolayerWhittaker} below). 
For the open KPZ equation, the corresponding limit, i.e. the convergence of stationary measures on $[0,L]$ to stationary measures on $\R_+$,  is equivalent to results of Hariya-Yor \cite{hariya2004limiting}, as shown in \cite{barraquand2021steady}.

\subsubsection{Two-layer Schur process: Markovian description} We provide more details about the results of this section in Section \ref{sec:MarkovianSchur}, dealing with more general two-layer Schur processes.

The marginal distribution of $\la^{0}$ under the probability measure \eqref{eq:introtwolyaerSchur} can be written as  $\mathsf p_0(\la^0)$ where 
\begin{equation*}
	\mathsf p_0(\la) = c_1^{\la_1-\la_2} h_{0,N}(\la_1-\la_2)\mathds{1}_{\la_2=0},
\end{equation*}
and  the functions $h_{x,N}:\Z_{\geq 0}\to \mathbb R_+$ are defined by 
\begin{equation}
	h_{x,N}(\ell)  = \frac{q_{N-x}(\ell)}{\sum_{\ell\geq 0} c_1^\ell q_N(\ell)}
	\label{eq:defhintro}
\end{equation} 
with  
\begin{equation*}
	q_M(\ell) = (1-a)^{2M} \sum_{\la\in \Sign_2} s_{\la/(\ell,0)}\left((a)^M\right) c_2^{\la_1-\la_2}, 
\end{equation*}
where $(a)^M=(a, \dots, a)\in [0,1)^M$. 
We note that the denominator $\sum_{\ell\geq 0} c_1^\ell q_N(\ell)$ in \eqref{eq:defhintro} is equal to the partition function $\mathcal Z_{\rm Geo}^{a,c_1,c_2}(N)$ in \eqref{eq:introtwolyaerSchur}, but we chose to write it in this way so that it becomes clear that $\mathsf p_0$ defines a probability distribution. 

For integers $0=x_0<x_1<\dots <x_k$, the function $h_{x,N}$ is defined precisely so that the marginal distribution of $\la^{x_0}, \la^{x_1}, \dots, \la^{x_k}$ is given by 
\begin{equation*}
	\mathsf p_0(\la^{x_0})\prod_{i=1}^k \mathsf p_{x_{i-1}, x_{i}}(\la^{x_{i-1}}, \la^{x_{i}}),
	\label{eq:p0}
\end{equation*}
where 
\begin{equation*}
	\mathsf p_{x,y}(\la, \mu) = (1-a)^{2(y-x)} s_{\mu/\la}\left((a)^{y-x}\right)\frac{h_{y,N}(\mu_1-\mu_2)}{h_{x,N}(\la_1-\la_2)}. 
\end{equation*}

Thus, the process $(\la^i)_{0\leq i\leq N}$ defined by \eqref{eq:introtwolyaerSchur} can also be described as a time-inhomogeneous Markov process with initial distribution $\mathsf p_0$ and transition kernel $	\mathsf p_{x,y}(\lambda, \mu)$ (from time $x$ to time $y$). Furthermore, all quantities appearing in the definition of the kernel admit explicit integral formulas.  Skew Schur functions can be written as a contour integral by Lemma \ref{lem:skewSchur}. The normalization constant $\mathcal Z_{\rm Geo}^{a,c_1,c_2}(N)$ is computed in \eqref{eq:noramlizationLPPintro}. Finally, the function $q_{M}$ admits the following expression (proved as Proposition \ref{prop:formulaqgeneral} in Section \ref{sec:MarkovianSchur} below).

 For $c_1, c_2<1$, and any $\ell\in \Z_{\geq 0}$, we have
	\begin{equation}
		q_M(\ell) = \frac{1}{2} \oint \frac{dz}{2\I\pi} \left(z^{\ell+1} - \frac{1}{z^{\ell+1}} \right)\left(\frac{1}{z^2} - 1 \right) \frac{1}{1-c_2z}\frac{1}{1-c_2/z}\left(\frac{1-a}{1-az}\frac{1-a}{1-a/z} \right)^M,
		\label{prop:expressionQ}
	\end{equation}
where the contour is a circle of radius $1$ around the origin. 
\begin{remark} 
When $N\to\infty$ one can show   (see Proposition \ref{prop:limitq}) that for any $x$ and $c_2<1$, 
\begin{equation}
	h_{x,N}(\ell)\xrightarrow[N\to\infty]{}h(\ell) := (\ell+1)(1-c_1)^2.
	\label{eq:defhlimitintro}
\end{equation}
In particular the limit does not depend on $x$.  Thus, for $c_1,c_2<1$, the large $N$ limit of the Markov process with transition probabilities $\mathsf p_{x,y}(\la, \mu)$ becomes a very simple Doob transformed process, which can be interpreted as a conditioned random walk studied in  \cite{o2003conditioned}. As one should  expect, it  matches with  the stationary model for half-space geometric LPP denoted $G_{r,s}^{\rm stat}$   in  \cite[Sec. 3]{barraquand2023stationaryloggamma}, with $r=c_1, s=1$ (see more details in Section \ref{sec:MarkovianSchur}).  
\end{remark}
\begin{remark}
	The limit \eqref{eq:defhlimitintro} has another application. It allows to define a consistent  family of probability measures 
	$$ \mathbb P_n\left( \lambda^0,\dots,\lambda^n\right) = c_1^{\lambda_1^0-\lambda_2^0}\mathds{1}_{\lambda_2^0=0}  \times \left( \prod_{i=1}^n (1-a)^2s_{\lambda^i/\lambda^{i-1} }(a)\right) \times h(\lambda_1^n-\lambda_2^n) .$$
	They are consistent in the sense that for all $k\geq 0$, the marginal distribution of $\lambda^0,\dots,\lambda^k$ in $\mathbb P_n$ is the same for all $n\geq k$. We believe that these measures are the appropriate object to adapt the method of \cite{barraquand2023stationary} to half-space models. In general, one should consider a family of possible functions $h$, parametrized by $c_2$ with  $$h(\ell)\propto  s_{(\ell,0)}(c_2,1/c_2) = \frac{c_2^{\ell+1}-c_2^{-\ell-1}}{c_2-1/c_2}.$$
\end{remark}

\subsubsection{Two-layer Whittaker process: Markovian description} The Whittaker case is similar. We provide more details in Section \ref{sec:MarkovWhittaker}, dealing with more general two-layer Whittaker processes. 

Using the branching rule for Whittaker functions, the marginal distribution of $\la^{0}$ under the probability measure \eqref{eq:introtwolyaerSchur} has density $\mathsf P_0(\la^0)$ where 
\begin{equation*}
	\mathsf P_0(\la) = e^{-u(\la_1-\la_2)} H_{0,N}(\la_1-\la_2)\delta_0(\la_2),
\end{equation*}
and  the functions $H_{x,N}:\R\to \mathbb R_+$ are defined by 
\begin{equation*}
	H_{x,N}(\ell)  = \frac{Q_{N-x}(\ell)}{\sum_{\ell\geq 0} e^{-u\ell} Q_N(\ell)}
	\label{eq:defHintro}
\end{equation*} 
with  
\begin{equation*}
	Q_M(\ell) =\frac{1}{\Gamma(\alpha)^{2M}} \int_{\R^2}d\lambda  \Psi_{(\alpha)^M} (\la/(\ell,0)) e^{-v(\la_1-\la_2)}. 
\end{equation*} 
For integers $0=x_0<x_1<\dots <x_k$, the marginal distribution of $\la^{x_0}, \la^{x_1}, \dots, \la^{x_k}$ has density 
\begin{equation*}
	\mathsf P_0(\la^{x_0})\prod_{i=1}^k \mathsf P_{x_{i-1}, x_{i}}(\la^{x_{i-1}}, \la^{x_{i}}),
\end{equation*}
where 
\begin{equation*}
	\mathsf P_{x,y}(\la, \mu) = \frac{1}{\Gamma(\alpha)^{2(y-x)}}\Psi_{(a)^{y-x}}(\mu/\la) \frac{H_{y,N}(\mu_1-\mu_2)}{H_{x,N}(\la_1-\la_2)}. 
\end{equation*}

Thus, the process $(\la^i)_{0\leq i\leq N}$ defined by \eqref{eq:introtwolayerWhittaker} is a Markov process with initial distribution $\mathsf P_0$ and transition kernel $\mathsf P_{x,y}$. Again, all quantities appearing in the definition of the kernel admit explicit integral formulas.  Skew Whittaker functions can be written as contour integrals  by Proposition \ref{lem:integralWhittaker}. The normalization constant $\mathcal Z_{\rm LG}^{\alpha, u,v}(N)=\sum_{\ell\geq 0} e^{-u\ell} Q_N(\ell)$ is computed in \eqref{eq:normalizationloggammaintro}. Finally, the function $Q_{M}$ admits the following expression (see Proposition \ref{prop:expressionQWhittaker} below). For $u,v>0$, and any $\ell\in \R$,  we have
	\begin{equation*}
		Q_M(\ell) =\int_{\I\R} \frac{dz}{2\I\pi} \Psi_z(\ell,0)  \frac{\Gamma(v\pm z)}{2\Gamma(\pm 2 z)}\left(\frac{\Gamma(\alpha\pm z)}{\Gamma(\alpha)^2}\right)^M.
		\label{prop:expressionQWhittaker}
	\end{equation*} 

\begin{remark} 
	When $N\to\infty$ an asymptotic analysis of the above integral (Proposition \ref{prop:limitQ}) shows that for any $x$ and $v>0$, 
	\begin{equation*}
		H_{x,N}(\ell)\xrightarrow[N\to\infty]{}H(\ell) :=  \frac{2K_0(2e^{-\ell/2}) }{\Gamma(u)^2},
	\end{equation*}
where $K_0$ is a Bessel K function. 
Again, the limit does not depend on $x$, so that  for $u,v>0$, the large $N$ limit of the Markov process with transition probabilities $\mathsf P_{x,y}(\la, \mu)$ becomes a very simple Doob transformed process, with transition probability (from time $x$ to $x+1$)
\begin{equation}
\overline{	\mathsf P}(\lambda, \mu) = \frac{H(\mu_1-\mu_2)}{H(\la_1-\la_2)} \frac{1}{\Gamma(\alpha)^2}\Psi_{\alpha}(\mu/\la).
	\label{eq:Doobtransformedprocessloggamma}
\end{equation}
 One remarks that the function $H$ is the same as in the Markovian description of limits of the open KPZ equation stationary measure \cite[Theorem 2.3]{bryc2021markov2} (up to some factors of 2 which result from different conventions used). The same $H$-transform appeared  earlier in several works, for instance in the Matsumoto-Yor theorem \cite{matsumoto2000analogue}. 

The Markov process with initial distribution $\mathsf P_0(\lambda) = e^{-u(\la_1-\la_2)}H(\la_1-\la_2)\delta_0(\la_2)$ and kernel \eqref{eq:Doobtransformedprocessloggamma} should match with  the half-space log-gamma stationary measure denoted $h_{u,0}$ in \cite[Definition 1.6]{barraquand2023stationaryloggamma}. It is outside the scope of the present paper to fully prove this matching, but we expect that it could be checked using the fact that  $h_{u,0}$ arises as the free energy of a log-gamma polymer on two rows (see \cite{barraquand2023stationaryloggamma}), which can be studied through half-space Whittaker processes \cite{barraquand2018half}. In particular,  results of  \cite{corwin2014tropical}  relate certain marginals of Whittaker processes to explicit Doob transformed processes (see \cite[Eq. (3.11)]{corwin2014tropical}). 
\label{rem:Doobloggamma}
\end{remark}

\subsection{Outline of the rest of the paper} Although  results are very  similar in the Schur and Whittaker case, arguments are often different, and  we will need to establish more preliminary results about Whittaker functions. Thus, we chose to present all results about two-layer Schur processes in Section \ref{sec:Schur}, and all results about two-layer Whittaker processes in Section \ref{sec:Whittaker}. The two sections are independent, although some arguments common to both sections are presented with slightly more details in Section \ref{sec:Schur}. Finally, in Section \ref{sec:KPZ}, we discuss the application to the open KPZ equation. 

\subsection*{Acknowledgments} This work was partially supported by Agence Nationale de la Recherche through grants ANR-21-CE40-0019 and ANR-23-ERCB-0007-01, as well as by the Swedish Research Council under grant no. 2016-06596, while the author was in residence at Institut Mittag-Leffler in Djursholm, Sweden, during the Fall 2024. The author thanks Ivan Corwin and Pierre Le Doussal for useful discussions about some of the results, and Wlodek Bryc for valuable comments on an early draft of this article.

\section{Two-layer Schur process}
\label{sec:Schur}

\subsection{Background on Schur functions}
\label{sec:Schurbackground}
We recall the main properties of Schur functions. We refer to \cite[Chap. 1]{macdonald1995symmetric} for a more comprehensive presentation. 
\subsubsection{Definitions}
Let us denote by $\Sign_n=\lbrace (\lambda_1,\dots,\lambda_n)\in \mathbb Z^n; \lambda_1\geq \dots \geq \lambda_n\rbrace$ the set of integer signatures of length $n$. 
 Let  $\Signplus_n=\lbrace (\lambda_1,\dots,\lambda_n)\in \mathbb Z^n; \lambda_1\geq \dots \geq \lambda_n\geqslant 0\rbrace$. We also denote by $\mathbb Y$ the set of all integer partitions, i.e. the set of non-increasing sequences of non-negative integers of the form $(\lambda_1, \lambda_2, \dots, )$ with finitely many non-zero coordinates. We will denote by $\ell(\lambda)$ the number of non-zero elements in a partition or a signature $\lambda$ and we will sometimes write a partition $\lambda$ as $\lambda=(\lambda_1, \dots, \lambda_{\ell(\lambda)}, 0, \dots, 0)$, thus viewing $\lambda$ as an element of $\Signplus_n$ for any  $n\geq \ell(\la)$. 

For $\lambda\in \Signplus_n$ and a set of variables $x=\lbrace x_1, \dots, x_n\rbrace$, Schur functions $s_{\lambda}(x)$ are symmetric polynomials 
\begin{equation} s_{\la}(x) := \frac{\det\left(x_i^{\la_j+n-j} \right)_{i,j=1}^n}{\det\left(x_i^{n-j} \right)_{i,j=1}^n}.
	\label{eq:defSchur}
\end{equation}
One extends this definition when $x=\lbrace x_1, \dots, x_m\rbrace$ with $m> n$ by letting $s_{\lambda}(x):=s_{\tilde\lambda}(x)$ where $\tilde\lambda=(\lambda_1, \dots, \lambda_n, 0,\dots, 0)\in \Signplus_m$. It is well-known that we have a stability property, i.e. for any $m$, $s_{\lambda}(x_1, \dots, x_m,0)=  s_{\lambda}(x_1, \dots, x_m)$. These considerations  imply  that 
Schur functions are more naturally indexed by partitions rather than integer signatures.\footnote{Moreover, Schur polynomials can be further extended to elements of the ring of symmetric functions, hence the name Schur functions. We refer to \cite{macdonald1995symmetric} for details.  }
However, in the present work,  it will be more convenient for us to  rather view Schur functions as polynomials in finitely many variables,  indexed by signatures. 

Two other properties are obvious from \eqref{eq:defSchur}: the scaling property, that is for $t\in \mathbb C$,  
\begin{equation}
s_{\la}(tx)=t^{\vert \lambda\vert} s_{\lambda}(x) \text{ with }\vert \la\vert =\sum_{i} \la_i, 
\label{eq:scalingproperty}
\end{equation}
and the shift property, that is for $x=\lbrace x_1, \dots, x_m\rbrace$,  $\lambda\in \Signplus_n$ and $c\in \mathbb Z_{\geq 0}$, 
\begin{equation}
	s_{\lambda+c^n}(x) = (x_1, \dots, x_m)^c s_{\la}(x). 
	\label{eq:shiftproperty}
\end{equation}
The property \eqref{eq:shiftproperty} can finally be imposed to hold for any $c\in \mathbb Z$ so as to extend the definition of Schur functions $s_{\la}$ to any $\la\in\Sign_n$ (for any $n$).

We could have defined Schur functions in an alternative way. For $\lambda, \mu\in \Sign_n$ and a single variable $x$, define skew Schur functions
\begin{equation}
	s_{\mu/\lambda}(x) = \mathds{1}_{\lambda\prec\mu} x^{\vert \mu \vert-\vert\lambda \vert}, 
	\label{eq:defskewSchur}
\end{equation}
where $\lambda\prec \mu$ means that $\lambda$ interlaces with $\mu$, that is $\mu_1\geq \la_1\geq \mu_2\geq\la_2\geq \dots\geq\mu_n\geq \la_n$.  
We then extend the definition to any tuple of variables $x=\lbrace x_1, \dots, x_m \rbrace$ by imposing the branching rule 
\begin{equation}
	s_{\mu/\lambda}(x_1, \dots, x_m) = \sum_{\nu\in \Sign_n } s_{\mu/\nu}(x_1, \dots, x_{m-1})s_{\nu/\lambda}(x_m),
	\label{eq:branchingrule}
\end{equation}
and it turns out that $s_{\mu/\lambda}(x_1, \dots, x_m)$ is a symmetric polynomial. 
Skew Schur functions also satisfy a scaling property, for $t\in \mathbb C$, 
\begin{equation}
	s_{\mu/\la}(tx)=t^{\vert \mu\vert-\vert \la\vert} s_{\mu/\la}(x).
	\label{eq:scalingpropertyskew}
\end{equation}
It also turns out that for any $\lambda\in \Signplus_n$ and $x=\lbrace x_1, \dots, x_m\rbrace$,   $s_{\lambda/0^n}(x)=s_{\lambda}(x)$ as defined in \eqref{eq:defSchur}.

\subsubsection{Summation formulas and orthogonality}

Schur functions satisfy the so-called Cauchy identity: for sets of variables $x=\lbrace x_1, \dots, x_n\rbrace$ and $y=\lbrace y_1,\dots, y_m\rbrace$, 
\begin{equation}
	\sum_{\la\in \Signplus_n} s_{\la}(x)s_{\la}(y) = \Pi(x;y) 
	\label{eq:Cauchy}
\end{equation}
where
\begin{equation}
	\Pi(x;y) = \prod_{i=1}^n\prod_{j=1}^m \frac{1}{1-x_i y_j}.
	\label{eq:defPi}
\end{equation}
For more general specializations of the Schur functions, the summation in \eqref{eq:Cauchy}  should be taken  over all integer partitions $\la\in\mathbb Y$. In our case, $s_{\la}(x_1, \dots, x_n)$ being zero for partitions $\la$ with length higher than $n$, we can restrict the summation over $\la \in\Signplus_n$.

Next, we will use the fact \cite[Ch. VI, Section 9]{macdonald1995symmetric} that $n$-variable Schur polynomials are orthonormal with respect to the scalar product defined as 
\begin{equation}
	\langle f,g\rangle_n = \oint\frac{dz_1}{2\I\pi z_1}\dots \oint\frac{dz_n}{2\I\pi z_n} f(z)g(z^{-1})\Delta(z)
	\label{eq:defscalarproductSchur}	
\end{equation} 
for polynomials $f,g$ of $n$ variables, where 
$$ \Delta(z)= \frac{1}{n!}\prod_{i\neq j}(1-z_i/z_j).$$
 
Using the Cauchy identity \eqref{eq:Cauchy} and the orthogonality of Schur polynomials, one can deduce the following skew Cauchy identity \cite[Ch I, Section 5 Ex. 26]{macdonald1995symmetric}: 
For any $\lambda\in \Signplus_n$, 
\begin{equation}
	\sum_{\mu\in\Signplus_n} s_{\mu/\lambda}(a_1, \dots, a_k)s_{\mu}(z_1, \dots, z_n) = s_{\lambda}(z_1,\dots, z_n)\Pi(a;z), 
	\label{eq:specialCauchy}
\end{equation}

Finally, we will use another summation formula called  Littlewood identity \cite[Ch I, Section 5 Ex. 7]{macdonald1995symmetric}: 
\begin{equation}
	\sum_{\la\in \Signplus_n} s_{\lambda}(a_1, \dots, a_n)c^{\sum_{i=1}^{n}(-1)^{i-1}\lambda_i} = \prod_{i=1}^n \frac{1}{1-ca_i} \prod_{1\leq i<j\leq n}\frac{1}{1-a_ia_j}.
	\label{eq:Littlewood}
\end{equation}

\subsection{General definition of the two-layer open Schur process}

We start with a more general definition of two-layer open Schur processes from \cite{barraquand2023stationary}. This is a probability measure on a sequence of signatures, depending on some down-right path $\mathcal P$ as depicted in Figure \ref{fig:downrightpaths}. It also depends on a sequence $a_1, \dots, a_N$ of inhomogeneity parameters. This probability measure describes the stationary measure of Last Passage Percolation on a strip along any down-right path, and  we refer to \cite{barraquand2023stationary} for more precise statements.

\begin{figure}[h]
	\begin{center}
		\begin{tikzpicture}[scale=0.7]
			\begin{scope} 
				\clip (0.5,0.5) -- (6.5,0.5) -- (11.5,5.5) -- (5.5,5.5) -- cycle;
				\draw[gray] (0,0) grid (12,12);
			\end{scope}
			\draw[ultra thick] (4,4) --(6,4) -- (6,2) -- (7,2) -- (7,1);
					\draw[] (4,4) node[below] {$\mathbf p_0$};
					\draw[] (5,4) node[below]{$\mathbf p_1$};
			\draw (6,2) node[below right] {$\mathcal P$}; 
		\end{tikzpicture}
	\end{center}
	\caption{A down-right path $\mathcal P$ joining left and right boundaries of the strip.}
	\label{fig:downrightpaths}
\end{figure}
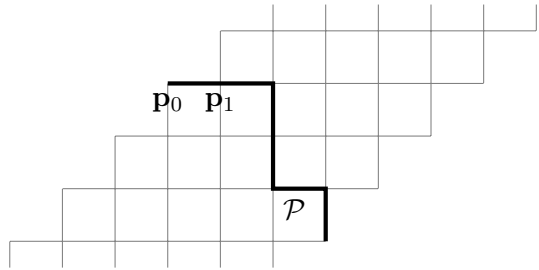
We will consider paths $\mathcal P$ joining points $\mathbf p_0, \mathbf p_1, \dots, \mathbf p_N$ where $\mathbf p_0=(x,x)$ for some $x\in \Z$, and $\mathbf p_N=(y+N,y)$ for some $y\in \Z$, such as $\mathbf p_{i+1}-\mathbf p_i$ can be either $(1,0) = ``\rightarrow"$ or $(0,-1)= ``\downarrow"$.  Up to translations in the diagonal direction, such paths $\mathcal P$ are determined by the sequence of increments $\mathbf p_{i+1}-\mathbf p_i$, which can be seen as a word $w\in \lbrace \rightarrow, \downarrow \rbrace^N$. 
 
\begin{definition}
Consider an integer $N\geq 1$, parameters $a_1, \dots, a_N\in (0,1)$ and boundary parameters $c_1,c_2$ such that $c_1c_2<1$ and $c_ia_j<1$ for all $i,j$. Let $w\in \lbrace \rightarrow, \downarrow \rbrace^N$. The two-layer open Schur process is a probability measure on sequences  $\boldsymbol\lambda=(\lambda^{0}, \dots, \lambda^{N}) \in (\Sign_2)^{N+1}$ such that
\begin{equation}
	\mathbb P(\boldsymbol{\lambda}) = \frac{1}{\mathcal Z^{\vec a, c_1,c_2}_{\rm Geo}(N)} c_1^{\lambda_1^0-\lambda_2^0}c_2^{\lambda_1^N-\lambda_2^N} \prod_{\footnotesize \begin{matrix}
			1\leq i\leq N \\ w_i=\rightarrow
	\end{matrix}} s_{\lambda^i/\lambda^{i-1} }(a_i)\prod_{\footnotesize \begin{matrix}
	1\leq i\leq N \\ w_i=\downarrow
\end{matrix}} s_{\lambda^{i-1}/\lambda^{i} }(a_i)\;\; \mathds{1}_{\lambda_2^0=0},
\end{equation} 
where the normalization constant $\mathcal Z^{\vec a, c_1,c_2}_{\rm Geo}$ does not depend on the shape of the path $w$ (\cite[Proposition 2.15]{barraquand2023stationary}). 
\label{def:generaltwolayerSchur}
\end{definition} 
 
In particular, when $w=(\rightarrow, \dots, \rightarrow)$, that is when $\mathcal P$ is a horizontal path, so that 
\begin{equation}
	\mathbb P(\boldsymbol{\lambda}) = \frac{1}{\mathcal Z^{\vec a, c_1,c_2}_{\rm Geo}(N)} c_1^{\lambda_1^0-\lambda_2^0}c_2^{\lambda_1^N-\lambda_2^N} \prod_{i=1}^N s_{\lambda^i/\lambda^{i-1} }(a_i)\;\; \mathds{1}_{\lambda_2^0=0},
	\label{eq:deftwolayerSchur}
\end{equation} 
and we recover the probability measure defined in Section \ref{sec:two-layer-intro}. 
Given some integers $0= x_0< x_1< \dots< x_k= N$, one can apply the branching rule \eqref{eq:branchingrule} to check that the marginal distribution of $\lambda^{x_0}, \dots, \lambda^{x_k}$ is 
\begin{equation}
	\mathbb P\left( \la^{x_0}=\mu^0, \dots, \la^{x_k}=\mu^k\right) = 
	\frac{1}{\mathcal Z^{\vec a, c_1,c_2}_{\rm Geo}(N)} c_1^{\mu_1^{0}-\mu_2^{0}}c_2^{\mu_1^{k}-\mu_2^{k}} \prod_{i=1}^k s_{\mu^{i}/\mu^{i-1} }(a_{x_{i-1}+1},\dots,a_{x_i})\;\; \mathds{1}_{\mu_2^{0}=0}.
	\label{eq:marginaldistributionSchur}
\end{equation}

\subsection{Exact integral formulas}
One of the main results of this article  is Theorem \ref{theo:formulaSchur} below. We explain in Section \ref{sec:proofLPP} how it directly implies Theorem \ref{theo:formulaLPPintro} stated in the introduction. For simplicity of the formulas and notations, we restrict in this Section to the case where $w=(\rightarrow)^N$, i.e. the path $\mathcal P$ is horizontal. We believe that  our method could be adapted for arbitrary $w$.

\begin{theorem}Assume that $c_1, c_2<1$. Let $t_1,\dots, t_k$ be real numbers such that  $ c_1^2<t_1<\dots <t_k< \left(\frac{1}{c_2}\right)^2$ and for all $1\leq i\leq k$ and $x_{i-1}<r\leq x_i$, we impose $\sqrt{t_i}<1/a_r$. Then, 
	
	\begin{multline}  	\mathbb E\left[\prod_{i=1}^k t_i^{\lambda_1^{x_i}-\lambda_1^{x_{i-1}}} \right] =    \frac{1}{\mathcal Z^{\vec a, c_1,c_2}_{\rm Geo}(N)}   \oint\frac{\mathfrak m_{\rm Geo}(dz_1)}{2\I\pi}\dots  \oint\frac{\mathfrak m_{\rm Geo}(dz_k)}{2\I\pi}
		\prod_{i=1}^k  \prod_{r=x_{i-1}+1}^{x_i} \phi(z_i^{\pm 1}, a_r \sqrt{t_i})\\
		\times \phi(z_1^{\pm 1}, c_1/\sqrt{t_1})  \phi(z_k^{\pm 1}, c_2\sqrt{t_k})  \prod_{i=1}^{k-1} \left(1-\frac{t_i  }{t_{i+1}} \right)
			\phi\left(z_i^{\pm 1}z_{i+1}^{\pm 1}, \sqrt{\frac{t_i}{t_{i+1}}}\right), 
		\label{eq:Laplacetransform2}
	\end{multline}
	where the integration contours  are circles around $0$ with radius $1$,  the superscript $\pm 1$ means that we take the product of the function $\phi$ with the choice  $+1$ and the function $\phi$ with the choice $-1$, and the notations $\mathfrak m_{\rm geo}(dz)$ and $\phi(z, a)$ are defined in \eqref{eq:defmphigeo} (the formula \eqref{eq:Laplacetransform2} is rewritten below in \eqref{eq:formulaschurdetails} without using any shorthand notations). 
\label{theo:formulaSchur}
\end{theorem}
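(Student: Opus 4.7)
The plan is to reduce the multi-time Laplace transform to a $k$-fold contour integral by successive applications of Schur identities, with the key step being a new summation identity for length-$2$ skew Schur functions that produces the characteristic coupling factor $(1-t_i/t_{i+1})\phi(z_i^{\pm 1}z_{i+1}^{\pm 1},\sqrt{t_i/t_{i+1}})$ appearing in \eqref{eq:Laplacetransform2}.

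First, setting $\ell^i:=\lambda_1^{x_i}-\lambda_2^{x_i}$, decompose the Laplace weight as
\begin{equation*}
\prod_{i=1}^k t_i^{\lambda_1^{x_i}-\lambda_1^{x_{i-1}}}=\prod_{i=1}^k\sqrt{t_i}^{\,|\lambda^{x_i}|-|\lambda^{x_{i-1}}|}\cdot \prod_{i=1}^k \sqrt{t_i}^{\,\ell^i-\ell^{i-1}}.
\end{equation*}
The first product is absorbed into each skew Schur specialization via the scaling property $s_{\mu/\nu}(\sqrt{t_i}\,a)=\sqrt{t_i}^{|\mu|-|\nu|}s_{\mu/\nu}(a)$; the second telescopes and, together with the boundary factors $c_1^{\ell^0}c_2^{\ell^k}$, produces twisted weights $(c_1/\sqrt{t_1})^{\ell^0}$, $(c_2\sqrt{t_k})^{\ell^k}$ and intermediate weights $(\sqrt{t_i/t_{i+1}})^{\ell^i}$. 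Since $\lambda_2^0=0$, the left-boundary twist coincides with the single-variable Schur $s_{\lambda^0}(c_1/\sqrt{t_1})$, so the branching rule \eqref{eq:branchingrule} collapses the sum over $\lambda^0$ to yield $s_{\lambda^{x_1}}(c_1/\sqrt{t_1},\sqrt{t_1}\,a^{(1)})$.

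The main tool is an orthogonality/inversion formula. Since $s_\mu(z,1/z)$ depends only on $\ell_\mu$ and equals $U_{\ell_\mu}(z):=(z^{\ell_\mu+1}-z^{-\ell_\mu-1})/(z-z^{-1})$, and since the $U_\ell$ are orthonormal under $\mathfrak m_{\rm Geo}(dz)/(2\pi\I)$ with generating function $\sum_{\ell\ge 0}q^\ell U_\ell(z)=\phi(z^{\pm 1},q)$, one has
\begin{equation*}
q^{\ell_\mu}=\oint \frac{\mathfrak m_{\rm Geo}(dz)}{2\pi\I}\,s_\mu(z,1/z)\,\phi(z^{\pm 1},q)\qquad (|q|<1).
\end{equation*}
Applying this to each twist $(c_2\sqrt{t_k})^{\ell^k}$ and $(\sqrt{t_i/t_{i+1}})^{\ell^i}$ for $1\le i\le k-1$ introduces the $k$ integration variables $z_1,\dots,z_k$, each paired with $\phi(z_i^{\pm 1},r_i)$ outside the sum and with an extra Schur factor $s_{\lambda^{x_i}}(z_i,1/z_i)$ inside. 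The rightmost sum over $\lambda^{x_k}$ is then handled by the skew Cauchy identity \eqref{eq:specialCauchy}, producing $\prod_r\phi(z_k^{\pm 1},a_r\sqrt{t_k})$ and leaving a residual $s_{\lambda^{x_{k-1}}}(z_k,1/z_k)$ to carry into the next step.

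The main obstacle is to close the iteration. At each subsequent step $i<k$, the sum over $\lambda^{x_i}$ now involves \emph{three} $\lambda^{x_i}$-dependent factors: the skew Schur $s_{\lambda^{x_i}/\lambda^{x_{i-1}}}(\sqrt{t_i}\,a^{(i)})$, the inversion factor $s_{\lambda^{x_i}}(z_i,1/z_i)$, and the residual $s_{\lambda^{x_i}}(z_{i+1},1/z_{i+1})$ from the previous step. Their product includes $s_\mu(z_i,1/z_i)s_\mu(z_{i+1},1/z_{i+1})=U_{\ell_\mu}(z_i)U_{\ell_\mu}(z_{i+1})$, which is not a Schur function on any single specialization, so the standard Cauchy/branching identities do not apply. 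A direct application of the Cauchy identity to the specializations $(\sqrt q z,\sqrt q/z),(\sqrt q w,\sqrt q/w)$ however yields
\begin{equation*}
(1-q^2)\,\phi(z^{\pm 1}w^{\pm 1},q)=\sum_{\ell\ge 0}q^\ell U_\ell(z)U_\ell(w),
\end{equation*}
which is precisely the source of the prefactor $(1-t_i/t_{i+1})$ in \eqref{eq:Laplacetransform2} once $q$ is identified with $\sqrt{t_i/t_{i+1}}$. Turning this observation into a closed formula for the three-factor sum is the content of Lemma \ref{lem:specialcomputation}, a new length-$2$ specific identity: I would prove it by writing the length-$2$ skew Schur via Jacobi–Trudi as $s_{\mu/\nu}(b)=h_{\mu_1-\nu_1}(b)h_{\mu_2-\nu_2}(b)$ (the second Jacobi–Trudi term vanishing on interlacing length-$2$ signatures), parametrizing $\mu\succ\nu$ by $(j,k)=(\mu_1-\nu_1,\mu_2-\nu_2)$ with $j\ge 0$ and $0\le k\le\ell^\nu$, performing the finite $k$-summation against $h_k(b)$, then resumming in $j$ using the generating function $\sum_j h_j(b)u^j=\prod_i1/(1-ub_i)$, and finally matching the $\ell$-dependence using the $(1-q^2)$-identity above. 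Iterating this transfer identity from $i=k-1$ down to $i=1$ telescopes the intermediate sums and produces the full product of coupling kernels; the final Cauchy summation over $\lambda^{x_1}$ against $s_{\lambda^{x_1}}(c_1/\sqrt{t_1},\sqrt{t_1}\,a^{(1)})s_{\lambda^{x_1}}(z_1,1/z_1)$ produces the left-boundary factor $\phi(z_1^{\pm 1},c_1/\sqrt{t_1})\prod_r\phi(z_1^{\pm 1},a_r\sqrt{t_1})$, matching all remaining factors of \eqref{eq:Laplacetransform2}. The extension of the formula to $c_1,c_2$ outside $(0,1)$ then follows by analytic continuation, since both sides are meromorphic in $c_1,c_2$ and agree on the open set where the inversions are valid.
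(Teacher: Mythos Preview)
Your overall strategy—introducing one integration variable per time point and iterating via skew Cauchy—is different from and more direct than the paper's, which writes each $s_{\mu^i/\mu^{i-1}}$ as a \emph{two}-variable integral (Lemma~\ref{lem:skewSchur}), sums the $\mu^i$ via Cauchy, Littlewood and Lemma~\ref{lem:specialcomputation} to obtain a $2k$-fold integral (Proposition~\ref{prop:formulaSchur}), and only then reduces to $k$ variables through a chain of residues at $w_2^i=1/w_1^i$. But there is a genuine gap at the step you rightly flag as the main obstacle. After inserting the simple inversion $q^{\ell}=\oint U_{\ell}(z)\,\phi(z^{\pm1},q)\,\mathfrak m_{\rm Geo}(dz)/(2\pi\I)$ for every intermediate twist, the $\lambda^{x_i}$ sum becomes
\[
T_\nu(z_i,z_{i+1};b)=\sum_{\mu\in\Signplus_2} s_{\mu/\nu}(b)\,U_{\ell_\mu}(z_i)\,U_{\ell_\mu}(z_{i+1}).
\]
This is \emph{not} what Lemma~\ref{lem:specialcomputation} computes (that lemma evaluates $\sum_\lambda s_\lambda(x)s_\lambda(y)c^{\ell_\lambda}$, with no skew factor), and more seriously $T_\nu$ does not factor as $G(z_i,z_{i+1};b)\cdot U_{\ell_\nu}(z_i)$, which your iteration would need: already for a single variable $b=(b_1)$ and $\nu=(1,0)$ one checks that $T_{(1,0)}\neq T_{(0,0)}\cdot U_1(z_i)$. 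Your Jacobi--Trudi argument also fails, because for multivariable $b$ the support of $s_{\mu/\nu}(b)$ is $\nu\subseteq\mu$, not the interlacing condition $\nu\prec\mu$; hence the second determinant term does not vanish and the range $0\le k\le\ell_\nu$ is incorrect.

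The repair is close at hand and uses only your own ingredients. Do not insert all the $z_i$ simultaneously. Insert $z_k$ by the simple inversion, sum over $\lambda^{x_k}$ by skew Cauchy to obtain the residual $U_{\ell^{k-1}}(z_k)$, and then introduce $z_{k-1}$ via the \emph{coupled} inversion
\[
q^{\ell}\,U_{\ell}(w)\;=\;(1-q^2)\oint\frac{\mathfrak m_{\rm Geo}(dz)}{2\pi\I}\,U_{\ell}(z)\,\phi\bigl(z^{\pm1}w^{\pm1},q\bigr),
\]
which is precisely your $(1-q^2)$ identity read through the orthonormality of the $U_\ell$. This delivers the correct coupling factor $(1-t_i/t_{i+1})\,\phi(z_i^{\pm1}z_{i+1}^{\pm1},\sqrt{t_i/t_{i+1}})$ immediately, and the ensuing sum over $\lambda^{x_{k-1}}$ is a plain skew Cauchy with residual $U_{\ell^{k-2}}(z_{k-1})$. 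Iterating down to $i=1$, the last sum is the ordinary Cauchy identity $\sum_{\lambda}s_{\lambda}(c_1/\sqrt{t_1},\sqrt{t_1}\,a^{(1)})\,s_{\lambda}(z_1,1/z_1)$, yielding the left-boundary factors and \eqref{eq:Laplacetransform2}. Compared with the paper, this bypasses both the $2k$-fold intermediate formula and the residue reduction; what plays the role of Lemma~\ref{lem:specialcomputation} here is the coupled inversion above, which is in effect that lemma specialized to $x_1x_2=y_1y_2=1$ after stripping the divergent $\lambda_2$-sum.
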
 

 As a corollary, the normalization constant $\mathcal Z^{\vec a, c_1,c_2}_{\rm Geo}$ can also be expressed as an integral.
\begin{corollary}
Assume that $c_1,c_2<1$. Then, 
 \begin{equation} \mathcal Z^{\vec a, c_1,c_2}_{\rm Geo}(N) =  \frac{1}{2}  \oint \frac{dz}{ 2\I\pi z} \frac{(1-z^2)(1-1/z^2)}{(1-zc_1)(1-\frac{c_1}{z})(1-\frac{c_2}{z})(1-c_2 z)} \prod_{i=1}^N\left(\frac{1}{(1- a_i z)(1-a_i/ z)} \right), 
	 	\label{eq:normalisation integral}
\end{equation}
where the contour is a circle around $0$ with radius $1$. 
Evaluating by residues for small values of $N$, we find, 
\begin{equation}
	\mathcal Z^{\vec a, c_1,c_2}_{\rm Geo}(N=0) = \frac{1}{1-c_1c_2}, \quad \quad \mathcal Z^{\vec a, c_1,c_2}_{\rm Geo}(N=1) = \frac{1}{(1-c_1c_2)(1-a_1c_1)(1-a_1c_2)}, 
	\label{eq:normalisationexplicit1}
\end{equation}
and 
\begin{equation}
\mathcal Z^{\vec a, c_1,c_2}_{\rm Geo}(N=2) = \frac{1-a_1a_2c_1c_2}{(1-c_1c_2)(1-a_1c_1)(1-a_1c_2)(1-a_2c_1)(1-a_2c_2)(1-a_1a_2)}.
	\label{eq:normalisationexplicit2}
\end{equation}
For larger values of $N$, the partition function does not seem to factorize anymore. 
\label{cor:normalization}
\end{corollary}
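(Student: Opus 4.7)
The plan is to obtain \eqref{eq:normalisation integral} as a direct specialization of Theorem~\ref{theo:formulaSchur}, and then to extract the closed forms \eqref{eq:normalisationexplicit1}--\eqref{eq:normalisationexplicit2} by elementary residue calculus on the resulting contour integral.

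First, I would apply Theorem~\ref{theo:formulaSchur} with $k=1$, $x_0=0$, $x_1=N$, and $t_1=1$. The standing assumptions $c_1^2<t_1<1/c_2^2$ and $\sqrt{t_1}<1/a_r$ reduce to $c_1,c_2,a_r\in(0,1)$, which hold by the hypotheses of Definition~\ref{def:LPP}. The left-hand side of~\eqref{eq:Laplacetransform2} becomes $\EE[1^{\lambda_1^N-\lambda_1^0}]=1$; the product over $i=1,\dots,k-1$ in the right-hand side is empty; and the boundary factor $\phi(z_1^{\pm 1},c_1/\sqrt{t_1})\phi(z_1^{\pm 1},c_2\sqrt{t_1})$ collapses to $\phi(z_1^{\pm 1},c_1)\phi(z_1^{\pm 1},c_2)$. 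Unfolding $\mathfrak m_{\rm Geo}(dz)=(1-z^2)(1-1/z^2)/(2z)$ and $\phi(z,a)=1/(1-za)$, and rearranging the resulting identity $1=[\text{integral}]/\mathcal Z^{\vec a,c_1,c_2}_{\rm Geo}(N)$, gives \eqref{eq:normalisation integral} upon identifying the overall factor of $1/2$.

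Next, I would compute the explicit values by residues. After rewriting $1/(1-c_i/z)=z/(z-c_i)$ and $1/(1-a_r/z)=z/(z-a_r)$, the integrand is a rational function of $z$ whose poles inside the unit disc lie at $z=c_1,c_2,a_1,\dots,a_N$, together with a possible pole at the origin coming from $(1-1/z^2)$. A short check of the leading Laurent term at $z=0$ shows regularity there as soon as $N\geq 1$, while for $N=0$ there is a simple pole at the origin contributing $-1/(2c_1c_2)$. Summing the three residues at $z=0,c_1,c_2$ for $N=0$ (and the four residues at $z=c_1,c_2,a_1$ for $N=1$), one finds after a short common-denominator calculation that the factor $(c_1-c_2)$ appearing in the individual residues cancels out and the remaining rational function collapses to the compact form in \eqref{eq:normalisationexplicit1}. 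The same scheme applied for $N=2$ produces the five residues at $z=c_1,c_2,a_1,a_2$ (no origin contribution), and the nontrivial numerator $1-a_1 a_2 c_1 c_2$ in \eqref{eq:normalisationexplicit2} emerges from the algebraic cancellation between the boundary contributions at $c_i$ and the bulk contributions at $a_r$.

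The only substantive obstacle is the algebraic simplification for $N=2$, where five rational expressions must collapse into the single compact form \eqref{eq:normalisationexplicit2}; this can be verified directly or delegated to a symbolic computation. As an independent cross-check (useful also to confirm the non-factorization claim for $N\geq 3$), one may evaluate $\mathcal Z^{\vec a,c_1,c_2}_{\rm Geo}(N)$ from the definition \eqref{eq:deftwolayerSchur} by applying the branching rule \eqref{eq:branchingrule} to obtain $\sum_{\lambda^0,\lambda^N\in\Sign_2,\,\lambda_2^0=0} c_1^{\lambda_1^0}c_2^{\lambda_1^N-\lambda_2^N}\,s_{\lambda^N/\lambda^0}(a_1,\dots,a_N)$, then carrying out the remaining finite sums via the Littlewood identity \eqref{eq:Littlewood}. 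This route is more direct for $N\leq 2$ but becomes unwieldy in general, whereas the residue derivation extends uniformly to every $N$.
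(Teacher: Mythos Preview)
Your approach matches the paper's: specialize Theorem~\ref{theo:formulaSchur} at $k=1$, $t_1=1$ so the left-hand side is $1$, read off the normalization constant, then evaluate by residues. You are in fact more careful than the paper's sketch in noting the extra simple pole at $z=0$ when $N=0$ (the paper only lists $c_1,c_2,a_j$); the only slip is in your residue counts --- for $N=1$ there are three poles inside the contour ($c_1,c_2,a_1$), not four, and for $N=2$ there are four ($c_1,c_2,a_1,a_2$), not five.
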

\begin{proof}
	The normalization constant does not depend on $k$, and is necessarily equal to the same integral as in \eqref{eq:Laplacetransform2} where $k=1$ and  $t_1=1$, which immediately yields the formula \eqref{eq:normalisation integral}. 
	To evaluate this integral by residues, we compute residues at the poles for $z=c_1$, $z=c_2$ and $z=a_j$,  which evaluate to \eqref{eq:normalisationexplicit1} and \eqref{eq:normalisationexplicit2}.
\end{proof}
\begin{remark}
When $c_1<1$ but $c_2>1$, or vice versa, the value of the normalization constant can be computed from \eqref{eq:normalisation integral} by analytically continuing the integral. In other words, \eqref{eq:normalisation integral} is true when $c_1c_2<1$. Otherwise,  we should  define the integral as being the sum of residues of the integrand  at $z=c_1$, $z=c_2$ and $z =a_j$ for all $1\leq j\leq N$. 
\end{remark}

\subsection{Proof of Theorem \ref{theo:formulaSchur}}
\label{sec:proofLaplacetransform}
In order to compute the multidimensional Laplace transform appearing in \eqref{eq:Laplacetransform1}, we use an integral formula for skew Schur functions arising in the marginal distribution  \eqref{eq:marginaldistributionSchur}.  The following well-known result  holds for signatures of any length $n$, though we will eventually use it only for $n=2$. 
\begin{lemma}
For $\lambda, \mu\in \Sign_n$, 
\begin{equation}
	s_{\mu/\lambda}(a_1, \dots, a_k) = \oint\frac{dz_1}{2\I\pi z_1}\dots \oint\frac{dz_n}{2\I\pi z_n} \Delta(z)  s_{\mu}(z^{-1})s_{\lambda}(z) \prod_{i=1}^n\prod_{j=1}^k \frac{1}{1-z_i a_j} 
	\label{eq:formulaskewSchur}
\end{equation}	
where $z=(z_1,\dots, z_n)$, $z^{-1}=(z_1^{-1}, \dots, z_n^{-1})$, 
and the variables are integrated along a circle around $0$ with radius smaller than all the $a_j^{-1}$. 
\label{lem:skewSchur}
\end{lemma}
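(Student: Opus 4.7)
The plan is to derive the integral formula as a direct consequence of the skew Cauchy identity \eqref{eq:specialCauchy} combined with the orthonormality of Schur polynomials with respect to the scalar product \eqref{eq:defscalarproductSchur}.

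First, I would recall that $\{s_\mu\}_{\mu\in\Signplus_n}$ is an orthonormal family for $\langle\cdot,\cdot\rangle_n$, so that $\langle s_\lambda, s_\mu\rangle_n = \delta_{\lambda,\mu}$ for partitions of length at most $n$. To extend this to arbitrary $\lambda,\mu\in \Sign_n$, I would use the shift property \eqref{eq:shiftproperty}: choosing $c\in \Z_{\geq 0}$ large enough that $\lambda+c^n,\mu+c^n\in\Signplus_n$, the factors $(z_1\cdots z_n)^{c}$ coming from $s_{\lambda+c^n}(z)$ and $(z_1\cdots z_n)^{-c}$ from $s_{\mu+c^n}(z^{-1})$ cancel inside the integrand defining $\langle\cdot,\cdot\rangle_n$. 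Hence orthonormality extends verbatim to all signatures, and the same trick also extends the skew Cauchy identity \eqref{eq:specialCauchy} from partitions to signatures.

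Next, I would take the scalar product (in the variable $z$) of both sides of \eqref{eq:specialCauchy} against $s_\mu$. By orthonormality the left-hand side collapses to $s_{\mu/\lambda}(a_1,\dots,a_k)$, while the right-hand side equals
\begin{equation*}
\langle s_\lambda(\cdot)\,\Pi(a;\cdot), s_\mu\rangle_n
= \oint\frac{dz_1}{2\I\pi z_1}\cdots\oint\frac{dz_n}{2\I\pi z_n}\,\Delta(z)\,s_\lambda(z)\,s_\mu(z^{-1})\,\Pi(a;z),
\end{equation*}
which, upon inserting the product form \eqref{eq:defPi} of $\Pi(a;z)$, gives precisely \eqref{eq:formulaskewSchur}.

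The only point requiring some care, and hence the main (mild) obstacle, is justifying the manipulations analytically. The prescription that each contour be a circle around $0$ of radius strictly less than $\min_j a_j^{-1}$ ensures that $\Pi(a;z)$ is holomorphic in a neighborhood of the integration cycle, and that the expansion $\sum_{\nu\in\Signplus_n} s_\nu(z)\,s_\nu(y)=\prod_{i,l}(1-z_i y_l)^{-1}$ underlying the Cauchy pairing converges uniformly when $y=z^{-1}$ with $z$ on the contour multiplied by the $a_j$. This legitimizes exchanging the summation over $\nu$ with the $z$-integration. Beyond this standard bookkeeping, the argument is a formal application of orthogonality and presents no further obstacle.
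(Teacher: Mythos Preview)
Your proposal is correct and follows essentially the same approach as the paper: both arguments take the scalar product of the skew Cauchy identity \eqref{eq:specialCauchy} against $s_\mu$ using the orthonormality \eqref{eq:defscalarproductSchur}, and then invoke the shift property \eqref{eq:shiftproperty} to pass from $\Signplus_n$ to $\Sign_n$. The only cosmetic difference is that the paper first derives the formula for $\lambda,\mu\in\Signplus_n$ and extends afterwards, whereas you extend orthonormality and the skew Cauchy identity to signatures before pairing; this is the same argument in a different order.
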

\begin{proof}
	Let us start from the following special case of the skew Cauchy identity \eqref{eq:specialCauchy}
\begin{equation}
	 \sum_{\mu\in\Signplus_n} s_{\mu/\lambda}(a_1, \dots, a_k)s_{\mu}(z_1, \dots, z_n) = s_{\lambda}(z_1,\dots, z_n)\Pi(a;z).
	 \label{eq:specialCauchy2}
\end{equation}
Taking the scalar product \eqref{eq:defscalarproductSchur}, in both sides of \eqref{eq:specialCauchy}, against $s_{\mu}$, we obtain
$$s_{\mu/\lambda}(a_1, \dots, a_k) = \left\langle s_{\lambda}(\cdot)\Pi(a;\cdot ) \; , \; s_{\mu}(\cdot) \right\rangle_n$$
which establishes the formula \eqref{eq:formulaskewSchur} when $\lambda, \mu\in \Signplus_n$. The formula extends to any $\lambda, \mu\in \Sign_n$ thanks to the shift property of Schur functions \eqref{eq:shiftproperty}. 
\end{proof}
Before we prove Theorem \ref{theo:formulaSchur}, let us state another useful identity. This one holds for signatures of length  $n=2$ and does not seem to admit natural generalizations. 
\begin{lemma}
	For $x=(x_1,x_2)\in \mathbb C^2$ and $y=(y_1,y_2)\in \mathbb C^2$ with $\vert x_iy_j\vert <1$, and $c\in \mathbb C$ such that $\vert c x_iy_j\vert <1$,  we have 
	\begin{equation}
		\sum_{\lambda\in \Signplus_2} s_{\lambda}(x)s_{\lambda}(y)c^{\lambda_1-\lambda_2} = \Pi(x,cy)\frac{1-c^2 x_1x_2y_1y_2}{1- x_1x_2y_1y_2}.
	\end{equation}
\label{lem:specialcomputation}
\end{lemma}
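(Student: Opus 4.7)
The plan is to reduce both sides to elementary rational function manipulations, exploiting the fact that for $n=2$ the Schur functions admit a simple closed form. First, by the shift property \eqref{eq:shiftproperty}, every $\lambda\in\Signplus_2$ can be written uniquely as $(m+\ell,m)$ with $m=\lambda_2\geq 0$ and $\ell=\lambda_1-\lambda_2\geq 0$, and $s_{(m+\ell,m)}(x)=(x_1x_2)^m s_{(\ell,0)}(x)$. Since $s_{(\ell,0)}(x_1,x_2)=h_\ell(x_1,x_2)=\tfrac{x_1^{\ell+1}-x_2^{\ell+1}}{x_1-x_2}$, summing the geometric series in $m$ rewrites the left-hand side as
\[
\frac{1}{1-x_1x_2y_1y_2}\sum_{\ell\geq 0}h_\ell(x)h_\ell(y)\,c^\ell.
\]

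Next I would expand $h_\ell(x)h_\ell(y)$ using the two-variable formula above, producing four terms of the form $\pm(x_iy_j)^{\ell+1}/[(x_1-x_2)(y_1-y_2)]$. Summing each in $\ell$ as a geometric series gives
\[
\sum_{\ell\geq 0}h_\ell(x)h_\ell(y)\,c^\ell
=\frac{1}{(x_1-x_2)(y_1-y_2)}\sum_{i,j=1}^2(-1)^{i+j}\frac{x_iy_j}{1-cx_iy_j}.
\]
Substituting and clearing the common denominator $\prod_{i,j}(1-cx_iy_j)$, the claim reduces to the polynomial identity
\[
\sum_{i,j=1}^2(-1)^{i+j}x_iy_j\prod_{(k,l)\neq(i,j)}(1-cx_ky_l)
=(x_1-x_2)(y_1-y_2)\bigl(1-c^2x_1x_2y_1y_2\bigr).
\]

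The final step is to verify this polynomial identity. The cleanest way I see is to match the coefficients of $c^j$ for $j=0,1,2,3$. The $c^0$ coefficient on the left is $x_1y_1-x_1y_2-x_2y_1+x_2y_2=(x_1-x_2)(y_1-y_2)$; the $c^3$ coefficient is $(-1+1+1-1)x_1x_2y_1y_2\cdot x_1x_2y_1y_2=0$; the $c^1$ coefficient vanishes after pairing terms containing $x_1x_2y_1y_2$ with opposite sign; and the $c^2$ coefficient collapses to $-x_1x_2y_1y_2(x_1-x_2)(y_1-y_2)$ after collecting the four triple products $x_iy_j\cdot x_ky_l\cdot x_py_q$. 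These four checks are routine. Alternatively, one may argue structurally: both sides are polynomials of bounded degree in $c,x_1,x_2,y_1,y_2$, and the left-hand side is antisymmetric under $x_1\leftrightarrow x_2$ and $y_1\leftrightarrow y_2$, so divisible by $(x_1-x_2)(y_1-y_2)$; the quotient and the right-hand-side factor $1-c^2x_1x_2y_1y_2$ then agree at enough specializations (e.g.\ $c=0$, trivially, and $c=1$, where the identity reduces to the standard two-variable Cauchy identity applied to $\sum_\lambda s_\lambda(x)s_\lambda(y)$).

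The main obstacle is purely algebraic bookkeeping in the final coefficient-matching step; nothing deeper than expanding four triple products is required, and the identity does not appear to have a one-line conceptual proof via the Cauchy or Littlewood identities, which is the reason it is flagged as seemingly new in the remark preceding the lemma.
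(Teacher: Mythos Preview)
Your direct computation is correct, and it is exactly the route the paper acknowledges but deliberately sets aside: ``The formula can be proved by a direct computation, writing Schur functions as alternant determinant ratios, expanding the determinants, and computing all sums explicitly.'' (Incidentally, if after reaching $\sum_{i,j}(-1)^{i+j}\frac{x_iy_j}{1-cx_iy_j}$ you first combine the two $x_1$-terms and the two $x_2$-terms over a common denominator, the $(y_1-y_2)$ factor drops out immediately and the remaining two-term combination yields $(x_1-x_2)(1-c^2x_1x_2y_1y_2)$ in one line, avoiding the coefficient-by-coefficient check.)

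The paper instead gives a probabilistic argument that you may find more satisfying than your closing remark suggests. It recognizes $\frac{1}{\Pi(x,cy)}s_\lambda(x)s_\lambda(cy)$ as the two-variable Schur measure, under which RSK gives $\lambda_2=\min\{w_{12},w_{21}\}$ with $w_{ij}\sim\mathrm{Geom}(cx_iy_j)$ independent. Using the scaling property, the left-hand side divided by $\Pi(x,cy)$ becomes $\mathbb E[c^{-2\lambda_2}]$, and since $\min\{w_{12},w_{21}\}\sim\mathrm{Geom}(c^2x_1x_2y_1y_2)$ this is $(1-c^2x_1x_2y_1y_2)/(1-x_1x_2y_1y_2)$. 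This is essentially a one-line conceptual proof, and more importantly it transports verbatim to the Whittaker analogue (Lemma~\ref{lem:specialcomputationWhittaker}) via geometric RSK, whereas your expansion has no obvious positive-temperature counterpart.

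One small gap: your ``alternatively, argue structurally'' sketch is not complete. After dividing by $(x_1-x_2)(y_1-y_2)$ the quotient is a polynomial of degree $3$ in $c$ (or degree $2$ once the $c^3$ vanishing is checked), so the two specializations $c=0$ and $c=1$ are not enough to pin it down; you would need at least one more.
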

\begin{proof}
The formula can be proved by a direct computation, writing Schur functions as alternant determinant ratios, expanding the determinants, and computing all sums explicitly. We prefer to give another proof, which is slightly less elementary, but does not involve any tedious computation.

The Schur measure on integer partitions
\begin{equation}
	\mathbb P(\lambda) = \frac{1}{\Pi(x,cy)}s_{\lambda}(x)s_{\lambda}(cy),
	\label{eq:Schurmeasure}
\end{equation} 
where $x=(x_1,x_2)$ and $y=(y_1,y_2)$ and $c>0$, is supported on signatures $\lambda\in \Sign_2$ and it is well-known that  under this probability measure, 
the coordinates $\lambda_1, \lambda_2$ are distributed as  
\begin{align}
	\lambda_1 &= w_{11}+ \max\lbrace w_{12},w_{21}\rbrace+ w_{22}\label{eq:RSK1}\\ 
	\lambda_1+\lambda_2 &= w_{11}+  w_{12}+w_{21}+ w_{22},\label{eq:RSK2}
\end{align}
where $w_{ij}$ are independent random variables  such that $w_{ij} \sim\mathrm{Geom}(cx_iy_j)$. This follows from the Robinson-Schensted-Knuth correspondence, see e.g.  \cite{johansson2000shape}. 
Ley us now write 
\begin{equation}
	\frac{1}{\Pi(x, cy)}\sum_{\lambda\in \Signplus_2} s_{\lambda}(x)s_{\lambda}(y)c^{\lambda_1-\lambda_2}=\frac{1}{\Pi(x, cy)} \sum_{\lambda\in \Signplus_2} s_{\lambda}(x)s_{\lambda}(cy)c^{-2\lambda_2}= \mathbb E\left[c^{-2 \min\lbrace w_{12}, w_{21}  \rbrace} \right],
	\label{eq:sumtoexpectation}
\end{equation}
where in the first equality we have used the scaling property \eqref{eq:scalingproperty}, and in the second equality, we recognize an expectation value with respect to the measure \eqref{eq:Schurmeasure}. 
Under this measure, \eqref{eq:RSK1} and \eqref{eq:RSK2} imply that  $\min\lbrace w_{12}, w_{21} \rbrace \sim \mathrm{Geom}(c^2 x_1x_2y_1y_2)$, so that 
$$\mathbb E\left[c^{-2 \min\lbrace w_{12}, w_{21}  \rbrace} \right] = \frac{1-c^2 x_1x_2y_1y_2}{1- x_1x_2y_1y_2}.$$ 
Hence, multiplying both sides of \eqref{eq:sumtoexpectation} by $\Pi(x,cy)$ we obtain the desired result. 
\end{proof}

Now we are ready to prove Laplace transform formulas. We first establish the following (which requires  $c_1c_2<1$  but not necessarily $c_1<1$ or $c_2<1$).
\begin{proposition}
	Let $t_1,\dots, t_k$ be real numbers such that  $c_1^2<t_1<\dots <t_k<\left( \frac{1}{c_2}\right)^2$. We have 
	\begin{multline}  	\mathbb E\left[\prod_{i=1}^k t_i^{\lambda_1^{x_i}-\lambda_1^{x_{i-1}}} \right] =   \frac{1}{\mathcal Z^{\vec a, c_1,c_2}_{\rm Geo}(N)}   \oint\!\!\!\oint \nu(dw^1) \dots \oint\!\!\!\oint \nu(dw^k) 
		\prod_{i=1}^k   \prod_{j=1}^2 \prod_{r=x_{i-1}+1}^{x_i} \frac{1}{1-a_r \sqrt{t_i}w_j^i}
		\\
		\times \prod_{i=1}^{k-1} \left(\frac{1-\frac{t_i w_1^{i+1}w_2^{i+1} }{t_{i+1} w_1^{i}w_2^{i} }   }{1- \frac{ w_1^{i+1}w_2^{i+1} }{w_1^{i}w_2^{i} } } 
		\prod_{r=1}^2\prod_{s=1}^2 \frac{1}{1-\sqrt{\frac{t_i}{t_{i+1}}} \frac{w^{i+1}_r}{w^{i}_s} }\right)
		\frac{1}{1-\frac{1}{w_1^kw_2^k} }\prod_{j=1}^2 	\frac{1}{1-\frac{w_j^1 c_1}{\sqrt{t_1}}}  \frac{1}{1-\frac{c_2\sqrt{t_k }}{w_j^k}}, 
		\label{eq:Laplacetransform1} 
	\end{multline}
	where we have used the short-hand notations $w^i=(w^i_1, w^i_2)$  and   $\nu(dw) = \Delta(w)\frac{dw_1}{2\I\pi w_1}\frac{dw_2}{2\I\pi w_2}$. For each $1\leq i\leq k$, the variables $w_1^i $ and $ w_2^i$ are integrated along a circle around $0$ with radius $r_i$ where the radii satisfy 
	\begin{equation}
		\frac{\sqrt{t_1}}{c_1}>r_1>r_2>\dots>r_k>1 \text{ and } r_i<\frac{1}{a_r \sqrt{t_i}} \text{ for }x_{i-1}<r\leq x_i.
		\label{eq:conditionsoncontours}
	\end{equation}
	\label{prop:formulaSchur}
\end{proposition}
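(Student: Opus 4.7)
The strategy is to insert the contour integral representation \eqref{eq:formulaskewSchur} for each of the $k$ skew Schur functions in the marginal \eqref{eq:marginaldistributionSchur}, evaluate the $k+1$ sums over the signatures $\mu^i$ using the summation identities collected in Section \ref{sec:Schurbackground} together with Lemma \ref{lem:specialcomputation}, and finally perform a rescaling of the integration variables. Writing out the expectation from \eqref{eq:marginaldistributionSchur} and applying Lemma \ref{lem:skewSchur} (with $n=2$) to each factor $s_{\mu^i/\mu^{i-1}}(a_{x_{i-1}+1},\dots,a_{x_i})$ introduces $k$ pairs $z^i=(z_1^i,z_2^i)$ of integration variables. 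Each intermediate $\mu^i$ (with $1 \leq i \leq k-1$) then appears only through the bilinear combination $t_i^{\mu_1^i}\,t_{i+1}^{-\mu_1^i}\,s_{\mu^i}((z^i)^{-1})\,s_{\mu^i}(z^{i+1})$, while $\mu^0$ (constrained by $\mu_2^0=0$) comes with the factor $(c_1/t_1)^{\mu_1^0}\,s_{\mu^0}(z^1)$ and $\mu^k$ with $c_2^{\mu_1^k-\mu_2^k}\,t_k^{\mu_1^k}\,s_{\mu^k}((z^k)^{-1})$; the $x$-dependent data is entirely carried by the $\phi$-type factors $(1-z_j^i a_r)^{-1}$.

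To evaluate the intermediate sums over $\mu^i\in\Signplus_2$, I would decompose $(t_i/t_{i+1})^{\mu_1^i} = \alpha_i^{|\mu^i|}\,\alpha_i^{\mu_1^i-\mu_2^i}$, with $\alpha_i := \sqrt{t_i/t_{i+1}}$, and use the scaling property \eqref{eq:scalingproperty} to absorb $\alpha_i^{|\mu^i|}$ into the argument of $s_{\mu^i}((z^i)^{-1})$. The sum then takes the form $\sum_{\mu^i}\alpha_i^{\mu_1^i-\mu_2^i}\,s_{\mu^i}(\alpha_i (z^i)^{-1})\,s_{\mu^i}(z^{i+1})$, which is exactly the setting of Lemma \ref{lem:specialcomputation} and evaluates in closed form. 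For $\mu^0$, the constraint $\mu_2^0=0$ reduces the sum to $\sum_{n\geq 0}(c_1/t_1)^n s_{(n,0)}(z_1^1,z_2^1) = \prod_{j=1}^2 (1-(c_1/t_1)z_j^1)^{-1}$. For $\mu^k$, the analogous decomposition $c_2^{\mu_1^k-\mu_2^k}t_k^{\mu_1^k} = (c_2\sqrt{t_k})^{\mu_1^k-\mu_2^k}\sqrt{t_k}^{|\mu^k|}$ combined with \eqref{eq:scalingproperty} turns the sum into an instance of the $n=2$ Littlewood identity \eqref{eq:Littlewood}, producing $\prod_{j=1}^2(1-c_2 t_k/z_j^k)^{-1}\cdot(1-t_k/(z_1^k z_2^k))^{-1}$.

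Finally, the rescaling $z_j^i = \sqrt{t_i}\,w_j^i$ performed in each integral leaves the measure $\nu(dz^i)$ invariant, since both $\Delta$ and $dz/z$ are scale-invariant, and transforms every factor into the one appearing in \eqref{eq:Laplacetransform1}: the $a_r$-factors become $(1-a_r\sqrt{t_i}\,w_j^i)^{-1}$, the contribution from Lemma \ref{lem:specialcomputation} becomes $\prod_{r,s=1}^2(1-\sqrt{t_i/t_{i+1}}\,w_r^{i+1}/w_s^i)^{-1}\cdot(1-(t_i/t_{i+1})w_1^{i+1}w_2^{i+1}/(w_1^iw_2^i))/(1-w_1^{i+1}w_2^{i+1}/(w_1^iw_2^i))$, and the boundary contributions become $\prod_j(1-(c_1/\sqrt{t_1})w_j^1)^{-1}$ and $\prod_j(1-c_2\sqrt{t_k}/w_j^k)^{-1}(1-1/(w_1^kw_2^k))^{-1}$. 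The contour conditions \eqref{eq:conditionsoncontours} are obtained by tracking the absolute-convergence range of each geometric series: Lemma \ref{lem:skewSchur} forces $r_i<1/(a_r\sqrt{t_i})$; the $\mu^0$-sum needs $r_1<\sqrt{t_1}/c_1$; Lemma \ref{lem:specialcomputation} needs $r_{i+1}<r_i$; and the Littlewood sum for $\mu^k$ needs $r_k>1$ (the condition $r_k>c_2\sqrt{t_k}$ is automatic since $c_2\sqrt{t_k}<1$ under the hypotheses on $t_k$).

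The main obstacle is the intermediate summation: the plain Cauchy identity \eqref{eq:Cauchy} would only recover a $\Pi$-type factor, but the twist $\alpha_i^{\mu_1^i-\mu_2^i}$ arising from the Laplace deformation generates the extra rational prefactor $(1-(t_i/t_{i+1})w_1^{i+1}w_2^{i+1}/(w_1^iw_2^i))/(1-w_1^{i+1}w_2^{i+1}/(w_1^iw_2^i))$, which is precisely the content of Lemma \ref{lem:specialcomputation}. The restriction to signatures of length $2$ in Definition \ref{def:generaltwolayerSchur} enters the argument essentially here, since Lemma \ref{lem:specialcomputation} does not appear to generalize transparently to $n\geq 3$.
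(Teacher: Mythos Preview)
Your proposal is correct and follows essentially the same approach as the paper's proof: insert the integral representation of Lemma~\ref{lem:skewSchur} for each skew Schur function, evaluate the sums over $\mu^0$, the intermediate $\mu^i$, and $\mu^k$ via the complete homogeneous sum, Lemma~\ref{lem:specialcomputation}, and the Littlewood identity~\eqref{eq:Littlewood} respectively, then rescale $z_j^i=\sqrt{t_i}\,w_j^i$. The only cosmetic difference is that the paper absorbs $\sqrt{t_i}$ and $\sqrt{t_{i+1}}$ separately into the two Schur functions carrying $\mu^i$, whereas you absorb the ratio $\sqrt{t_i/t_{i+1}}$ into one of them; the resulting summands and convergence constraints are identical.
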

\begin{remark}
	In Proposition \ref{prop:formulaSchur}, we could assume  slightly weaker assumptions on $t_i$. It suffices that  $a_jr_i \sqrt{t_i}<1$, $r_{i+1}<r_{i}$, $\sqrt{t_i}r_{i+1}<\sqrt{t_{i+1}}r_{i}$, $r_k>1$, $c_1r_1<\sqrt{t_1}$ and $c_2\sqrt{t_k}<r_k$. This ensures  that all summations performed in the proof of Proposition \ref{prop:formulaSchur} below are absolutely convergent so that the formula \eqref{eq:Laplacetransform1} still holds.
	\label{rem:weakerassumptions} 
\end{remark}
\begin{proof}[Proof of Proposition \ref{prop:formulaSchur}]
By definition, using \eqref{eq:marginaldistributionSchur}, 
\begin{multline}  	\mathbb E\left[\prod_{i=1}^k t_i^{\lambda_1^{x_i}-\lambda_1^{x_{i-1}}} \right] = \\  \frac{1}{\mathcal Z^{\vec a, c_1,c_2}_{\rm Geo}(N)} \sum_{\mu^{0}, \dots, \mu^{k}\in \Signplus_2}\mathds{1}_{\mu_2^{0}=0}\; c_1^{\mu_1^{0}-\mu_2^{0}}c_2^{\mu_1^{k}-\mu_2^{k}} \prod_{i=1}^k s_{\mu^{i}/\mu^{{i-1}} }(a_{x_{i-1}+1},\dots,a_{x_i}) t_i^{\mu_1^{i}-\mu_1^{{i-1}}}.
	\label{eq:laplacetransformstarting}
	\end{multline}
 Writing skew Schur functions as integrals, we obtain 
\begin{multline}  	\mathbb E\left[\prod_{i=1}^k t_i^{\lambda_1^{x_i}-\lambda_1^{x_{i-1}}} \right] =   \frac{1}{\mathcal Z^{\vec a, c_1,c_2}_{\rm Geo}(N)}  \sum_{\boldsymbol\mu\in (\Signplus_2)^k}\mathds{1}_{\mu_2^{0}=0} \oint\!\!\!\oint \nu(dz^1) \dots \oint\!\!\!\oint \nu(dz^k) 
 c_1^{\mu_1^{0}-\mu_2^{0}} c_2^{\mu_1^{k}-\mu_2^{k}} \\ \times  
 \prod_{i=1}^k \left(  \prod_{j=1}^2 \prod_{r=x_{i-1}+1}^{x_i} \frac{1}{1-a_r z_j^i}  \times s_{\mu^{i}}(\sqrt{t_i}  (z^i)^{-1}) s_{\mu^{{i-1}} }(z^i (\sqrt{t_i})^{-1})  \frac{\sqrt{t_i}^{\mu_1^{i}-\mu_2^{i}}}{\sqrt{t_i}^{\mu_1^{{i-1}}-\mu_2^{{i-1} } } }  \right), 
 \label{eq:applicationskewformula}
\end{multline}
where we have used the short hand notations $z^i=(z^i_1, z^i_2)$  and   $\nu(dz) = \Delta(z)\frac{dz_1}{2\I\pi z_1}\frac{dz_2}{2\I\pi z_2}$. To obtain \eqref{eq:applicationskewformula}, we have simply applied Lemma \ref{lem:skewSchur} to replace all skew Schur functions $s_{\mu^{i}/\mu^{{i-1}} }(a_{x_{i-1}+1},\dots,a_{x_i})$ in \eqref{eq:laplacetransformstarting} by integrals, and we have written 
 $$t_i^{\mu_1^{i}}=\sqrt{t_i}^{(\mu_1^{i}-\mu_2^{i})+ (\mu_1^{i}+\mu_2^{i})}$$
and likewise for $t_i^{\mu_1^{{i-1}}}$, so that 
$$ s_{\mu^{i}}(\sqrt{t_i} (z^i)^{-1}) s_{\mu^{{i-1}} }((\sqrt{t_i})^{-1}z^i) t_i^{\mu_1^{i}-\mu_1^{{i-1}}} = s_{\mu^{i}}(\sqrt{t_i} (z^i)^{-1}) s_{\mu^{{i-1}} }((\sqrt{t_i})^{-1}z^i)  \frac{\sqrt{t_i}^{\mu_1^{i}-\mu_2^{i}}}{\sqrt{t_i}^{\mu_1^{{i-1}}-\mu_2^{{i-1} } } }.$$
Initially, the variables in \eqref{eq:applicationskewformula} are integrated along  circles around $0$ with radius $1$, but we may deform the contours  so that  $\vert z_1^i\vert =\vert z_2^i\vert=R_i$ with $R_i<\min_{x_{i-1}<r\leq x_i}\lbrace a_r^{-1}\rbrace$.  The summations performed in the following steps will impose further  conditions that the $R_i$ must satisfy.  
  
Now, we may sum over the $\mu^{i}$ explicitly to greatly simplify the formula \eqref{eq:applicationskewformula}. The sum over $\mu^{0}$ is computed using  
\begin{multline}  \sum_{\mu^{0}\in \Signplus_2} \mathds{1}_{\mu_2^{0}=0}\;\;  c_1^{\mu_1^{0}-\mu_2^{0}}s_{\mu^{{0}} }\left((\sqrt{t_1})^{-1}z^1 \right)  \frac{1}{\sqrt{t_1}^{\mu_1^{{0}}-\mu_2^{{0} }}}  = \\ 
	\sum_{\mu_1^{0}\in \mathbb Z_{\geq 0}}  c_1^{\mu_1^{0}}s_{(\mu_1^{{0}}, 0) }\left((\sqrt{t_1})^{-1}z^1\right)  \frac{1}{\sqrt{t_1}^{\mu_1^{0}}}  =  \prod_{j=1}^2 \frac{1}{1-\frac{z_j^1c_1}{t_1}}, 
\label{eq:summu0}
\end{multline}
where in the second equality, we have used that for any integer $m\geq 0$, the Schur functions  $s_{(m,0)}(z_1,z_2) = h_m(z_1,z_2)$, where $h_m$ is the $m$th complete homogeneous symmetric polynomial.  
The sum  \eqref{eq:summu0} is convergent as long as $\vert z_j^1 \vert =R_1$ with  $c_1 R_1<t_1$. The sum over $\mu^{i}$ when $0< i<k$ is computed, using Lemma \ref{lem:specialcomputation}, as
$$\sum_{\mu^{i}\in \Signplus_2} s_{\mu^{i}}(\sqrt{t_i} (z^i)^{-1})s_{\mu^{{i}} }\left((\sqrt{t_{i+1}})^{-1}z^{i+1}\right) \sqrt{\frac{t_i}{t_{i+1}}}^{\mu_1^{i}-\mu_2^{i}}= \frac{1-\frac{t_i^2 z_1^{i+1}z_2^{i+1} }{t_{i+1}^2 z_1^{i}z_2^{i} }   }{1- \frac{t_i}{t_{i+1}}\frac{ z_1^{i+1}z_2^{i+1} }{z_1^{i}z_2^{i} } } 
\prod_{r=1}^2\prod_{s=1}^2 \frac{1}{1-\frac{t_i}{t_{i+1}} \frac{z^{i+1}_r}{z^{i}_s} }.$$
This sum is convergent as long as $R_{i+1} \sqrt{t_i}<R_{i}\sqrt{t_{i+1}}$ and $R_{i+1} t_i<R_{i}t_{i+1}$. These conditions can be satisfied for many choices of $t_i$ and $R_i$.  For instance, we note that conditions are satisfied  if  we assume that  $R_{i+1} t_i<R_{i}t_{i+1}$ and $t_i\geq t_{i+1}$, so that,  strictly speaking,  it is not necessary to impose that the $t_i$ are ordered as in the statement of the Proposition, though we will see below that it is more convenient to make this assumption.

The sum over $\mu^{k}$ is computed using \eqref{eq:Littlewood} as
$$  \sum_{\mu^{k}\in \Signplus_2} s_{\mu^{k}}(\sqrt{t_k} (z^k)^{-1}) (c_2\sqrt{t_k})^{\mu_1^{{k}}-\mu_2^{{k} }} = \frac{1}{1-\frac{t_k}{z_1^kz_2^k} }\prod_{j=1}^2 \frac{1}{1-\frac{c_2t_k }{z_j^k}} .$$
This sum is convergent as long as $t_k c_2<R_k$ and $t_k<R_k^2$. 
To summarize, after summing over $\mu^i$ for $1\leq i\leq k$ in \eqref{eq:applicationskewformula}, we obtain that 
\begin{multline}  	\mathbb E\left[\prod_{i=1}^k t_i^{\lambda_1^{x_i}-\lambda_1^{x_{i-1}}} \right] =   \frac{1}{\mathcal Z^{\vec a, c_1,c_2}_{\rm Geo}}   \oint\!\!\!\oint \nu(dz^1) \dots \oint\!\!\!\oint \nu(dz^k) 
	\prod_{i=1}^k   \prod_{j=1}^2 \prod_{r=x_{i-1}+1}^{x_i} \frac{1}{1-a_r z_j^i}
 \\
\times \prod_{i=1}^{k-1} \left(\frac{1-\frac{t_i^2 z_1^{i+1}z_2^{i+1} }{t_{i+1}^2 z_1^{i}z_2^{i} }   }{1- \frac{t_i}{t_{i+1}}\frac{ z_1^{i+1}z_2^{i+1} }{z_1^{i}z_2^{i} } } 
\prod_{r=1}^2\prod_{s=1}^2 \frac{1}{1-\frac{t_i}{t_{i+1}} \frac{z^{i+1}_r}{z^{i}_s} }\right)
	\frac{1}{1-\frac{t_k}{z_1^kz_2^k} }\prod_{j=1}^2 	\frac{1}{1-\frac{z_j^1 c_1}{t_1}} \frac{1}{1-\frac{c_2t_k }{z_j^k}}.  
\end{multline}
At this point, it is convenient to perform the change of variables $z^i_j=\sqrt{t_i}w^i_j$. We obtain
\begin{multline}  	\mathbb E\left[\prod_{i=1}^k t_i^{\lambda_1^{x_i}-\lambda_1^{x_{i-1}}} \right] =   \frac{1}{\mathcal Z^{\vec a, c_1,c_2}_{\rm Geo}}   \oint\!\!\!\oint \nu(dw^1) \dots \oint\!\!\!\oint \nu(dw^k) 
	\prod_{i=1}^k   \prod_{j=1}^2 \prod_{r=x_{i-1}+1}^{x_i} \frac{1}{1-a_r \sqrt{t_i}w_j^i}
	\\
	\times \prod_{i=1}^{k-1} \left(\frac{1-\frac{t_i w_1^{i+1}w_2^{i+1} }{t_{i+1} w_1^{i}w_2^{i} }   }{1- \frac{ w_1^{i+1}w_2^{i+1} }{w_1^{i}w_2^{i} } } 
	\prod_{r=1}^2\prod_{s=1}^2 \frac{1}{1-\sqrt{\frac{t_i}{t_{i+1}}} \frac{w^{i+1}_r}{w^{i}_s} }\right)
	\frac{1}{1-\frac{1}{w_1^kw_2^k} }\prod_{j=1}^2 	\frac{1}{1-\frac{w_j^1 c_1}{\sqrt{t_1}}}  \frac{1}{1-\frac{c_2\sqrt{t_k }}{w_j^k}},
	\label{eq:Laplacetransform3}
\end{multline}
where the contours are now circles with radii $r_i$, with $r_i=R_i/\sqrt{t_i}$. If the $t_i$ are ordered as in the statement of Proposition \ref{prop:formulaSchur}, then the rather simple conditions on $r_i$ given in \eqref{eq:conditionsoncontours} imply that all the inequalities that the $R_i$ must satisfy above are indeed satisfied. This concludes the proof of Proposition \ref{prop:formulaSchur}. 

As mentioned in Remark \ref{rem:weakerassumptions}, it is not necessary to impose that the $t_i$ are ordered. It suffices that the $R_i$ satisfy all conditions given above, and using $R_i=\sqrt{t_i}r_i$, this leads to the conditions given in Remark \ref{rem:weakerassumptions}. 
\end{proof}

\begin{proof}[Proof of Theorem \ref{theo:formulaSchur}]
We start from  the Laplace transform formula from Proposition \ref{prop:formulaSchur}, that is the formula \eqref{eq:Laplacetransform3} just above. Consider the integration over the variable $w_2^k$. There is a pole inside the contour at $w_2^k=c_2\sqrt{t_k}$ and at $w_2^k=1/w_1^k$.  One notices that the  first one yields no contribution, since after computing the residue at $w_2=c_2\sqrt{t_k}$, there are no longer poles inside the contour for the variable $w_1^k$. Here we have used the additional condition $c_2<1$ which we did not assume in Proposition \ref{prop:formulaSchur} but is necessary in Theorem \ref{theo:formulaSchur}. Hence, we can simply replace the integration by the residue at $w_2^k=1/w_1^k$.  We will now consider the integration over variables $w_2^{k-1}, w_2^{k-2}, \dots$ sequentially. At first sight, for some $1\leq i\leq k-1$, the integral over $w_i^2$ does not seem to have a pole at $1/w_i^1$.  However, the factor 
$$ \frac{1 }{1- \frac{ w_1^{i+1}w_2^{i+1} }{w_1^{i}w_2^{i} } }  $$
in \eqref{eq:Laplacetransform3}, after computing the residue at $w_2^{i+1}=1/w_1^{i+1}$, becomes 
$$ \frac{1 }{1- \frac{ 1 }{w_1^{i}w_2^{i} } }  $$
so that there is actually a simple pole at $w_2^i=1/w_1^i$ when we perform the integrations sequentially. There are two other poles in the variable $w_2^i$ at $w_2^i = \sqrt{\frac{t_i}{t_{i+1}}}w_{i+1}^1$ and at $w_2^i = \sqrt{\frac{t_i}{t_{i+1}}}\frac{1}{w_{i+1}^1}$. One notices that after computing these last two residues in the variable $w_2^i$, the factors in $\Delta(w^{i})$ (present in the measure $\nu(dw^i)$) exactly cancel with some denominators in the formula, so that eventually, there are no poles in the variable $w_1^i$ inside the contours. Thus, the integral over $w_2^i$ can be replaced by a residue at $w_2^i=1/w_1^i$, and one can then consider the integral over $w_2^{i-1}$.

Repeating the procedure  until computing the residue at $w_2^1=1/w_1^1$, and renaming variables as  $z_i=w_1^i$ for $1\leq i\leq k$, we obtain 
\begin{multline}  	\mathbb E\left[\prod_{i=1}^k t_i^{\lambda_1^{x_i}-\lambda_1^{x_{i-1}}} \right] =  \\  \frac{1}{\mathcal Z^{\vec a, c_1,c_2}_{\rm Geo}}   \oint\frac{dz_1}{2\I\pi z_1}\dots  \oint\frac{dz_k}{2\I\pi z_k}
	\prod_{i=1}^k \frac{ (1-z_i^2)\left( 1-1/z_i^2\right) }{2} \prod_{r=x_{i-1}+1}^{x_i} \frac{1}{(1-a_r \sqrt{t_i}z_i)(1-a_r \sqrt{t_i}/z_i)}
	\\
	\times \prod_{i=1}^{k-1} \left(\frac{1-\frac{t_i  }{t_{i+1}  }   }{     \left(1-\sqrt{\frac{t_i}{t_{i+1}} }z_iz_{i+1} \right)\left(1-\sqrt{\frac{t_i}{t_{i+1}} }\frac{z_i}{z_{i+1}} \right) \left(1-\sqrt{\frac{t_i}{t_{i+1}} }\frac{z_{i+1}}{z_{i}} \right)\left(1-\sqrt{\frac{t_i}{t_{i+1}} }\frac{1}{z_{i}z_{i+1} }     \right)  } \right)\\ \times 
	\frac{1}{\left(1-\frac{z_1 c_1}{\sqrt{t_1}}\right)\left(1-\frac{ c_1}{z_1\sqrt{t_1}}\right)}  \frac{1}{\left(1-\frac{c_2\sqrt{t_k }}{z_k}\right)\left(1- z_k c_2\sqrt{t_k }\right)},  
	\label{eq:formulaschurdetails}
\end{multline}
which is exactly \eqref{eq:Laplacetransform2}. 
This concludes the proof of Theorem  \ref{theo:formulaSchur}.
\end{proof}

\subsection{Proof of Theorem \ref{theo:formulaLPPintro}} 
\label{sec:proofLPP}
For $c_1,c_2<1$, in view of the identity in distribution \eqref{eq:twodescriptions}, the formula stated in Theorem \ref{theo:formulaLPPintro} is simply that of Theorem \ref{theo:formulaSchur} in the special case where all parameters $a_i$ are equal. It remains to justify the analytic continuation for other values of $c_1,c_2$. 

Given the definition of the probability measure $\mathbb P_{\rm Geo}^{a, c_1,c_2}$ in \eqref{eq:defstationarymeasureLPPintro}, the multipoint probability distribution of $\mathbf L_1, \mathbf L_2$ is a power series in $c_1$ and $c_2$ (this was  already remarked and used in \cite{barraquand2023stationary}). Thus, the expectation 
\begin{equation}
	\mathbb E_{\rm Geo}^{a, c_1,c_2}\left[\prod_{i=1}^k t_i^{2(L_1(x_i)-L_1(x_{i-1}))} \right]
	\label{eq:expextationpowerseries}
\end{equation} 
is likewise a power series. One needs to impose conditions on the $t_i$ such that the expectation is an absolutely convergent series. For that, it is convenient to observe that from the probabilistic definition \eqref{eq:defstationarymeasureLPPintro} 
$$ t^{2L_1(N)} \PP^{a,c_1,c_2}_{\rm Geo}(\mathbf L_1,\mathbf L_2) \leq C \PP^{at,c_1/t,c_2t}_{\rm Geo}(\mathbf L_1,\mathbf L_2) $$
where the constant $C$ does not depend on $\mathbf L_1,\mathbf L_2$. Thus, $\mathbb E_{\rm Geo}^{a, c_1,c_2}\left[t^{2L_2(N)}\right]$ is summable as long as $\PP^{at,c_1/t,c_2t}_{\rm Geo}$ is a valid probability measure. We know that this is the case when $at<1$, $at^2c_2<1$ and $ac_1<1$. This is why we assume that $at_k<1$ and $at_k^2c_2<1$. 
In such case, the absolutely convergent power series is an analytic function in $c_1, c_2$. Hence, it is equal to the analytic continuation of the integral formula in the right-hand side of \eqref{eq:formulaschurdetails}. One can perform the analytic continuation first with respect to $c_1$ and then with respect to $c_2$, or vice-versa. The result must be the same whatever the order, since it is equal to \eqref{eq:expextationpowerseries}. This concludes the proof of Theorem \ref{theo:formulaLPPintro}

\begin{remark} One could express the analytic continuation of \eqref{eq:formulaschurdetails}  as the sum of an integral term and some residues, depending on the values of $c_1,c_2$. We do not make this analytic continuation more explicit since we do not use it for any application. However, in the log-gamma case, we will  make some analytic continuations explicit, see in particular \eqref{eq:analyticcontinuation}, as we need them to prove results about the KPZ equation without restrictions on boundary parameters. 
\end{remark}

\subsection{Markovian description of the two-layer open Schur process}
\label{sec:MarkovianSchur}
In this Section we describe the two-layer Schur process \eqref{eq:deftwolayerSchur} as a Markov process with explicit, though complicated, transition probabilities and initial distribution. This alternative description is easy to obtain.  It follows directly from the definitions and the branching property of Schur polynomials. We then show that, using the same techniques as above, one can write explicit integral formulas for the transition probabilities, and in the large $N$ limit, those simplify, leading to a simple description of the stationary measure as a Doob transformed process.

Define the probability measure 
\begin{equation*}
 	\mathsf p_0(\la) = c_1^{\la_1-\la_2} h_{0,N}(\la_1-\la_2)\mathds{1}_{\la_2=0},
\end{equation*}
 where the functions $h_{x,N}:\Z_{\geq 0}\to \mathbb R_+$ are defined  by
 \begin{equation}
 	h_{x,N}(\ell)  = \frac{q_{N-x}(\ell)}{\sum_{\ell\geq 0} c_1^\ell q_N(\ell)},
 	\label{eq:defh}
 \end{equation} 
 with, for $M\leq N$, 
 \begin{equation}
 	q_M(\ell) = \prod_{i=N-M+1}^N(1-a_i)^2 \sum_{\la\in \Sign_2} s_{\la/(\ell,0)}\left(a_{N-M+1}, \dots, a_N\right) c_2^{\la_1-\la_2}, 
 	\label{eq:defq}
 \end{equation}
We also define transition probabilities
\begin{equation*}
	\mathsf p_{x,y}(\la, \mu) = \prod_{i=x+1}^y(1-a_i)^2 s_{\mu/\la}\left(a_{x+1}, \dots, a_y\right)\frac{h_{y,N}(\mu_1-\mu_2)}{h_{x,N}(\la_1-\la_2)}. 
	\label{eq:transitionkernel}
\end{equation*}
Using the branching rule \eqref{eq:branchingrule} and the fact that $s_{\mu/\la}$ remains invariant when adding the same integer to $\la_1,\la_2, \mu_2, \mu_2$, one may  check that $\mathsf p_{x,y}(\la, \mu)$ defines a probability measure on $\mu\in \Sign_2$ for every given  $\la$ and $x<y$. The branching rule also implies that $\mathsf p_{x,y}$ satisfy a semi-group like property 
\begin{equation}
 \sum_{\kappa\in \Sign_2}\mathsf p_{x,y}(\la, \kappa) \mathsf p_{y,z}( \kappa, \mu) =  \mathsf p_{x,z}(\la, \mu).
\end{equation}

\begin{proposition}
The process $x\mapsto \la^x$ defined by \eqref{eq:deftwolayerSchur} is a time-inhomogeneous Markov chain with transition kernel $\mathsf p_{x,y}(\la^x, \la^y)$ and initial distribution $\mathsf p_0(\la^0)$.
\label{prop:markoviandescription}
\end{proposition}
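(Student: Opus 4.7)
The plan is to verify directly that the joint distribution of $(\la^{x_0},\dots,\la^{x_k})$ at any marginal points $0=x_0<x_1<\dots<x_k\leq N$ factors as $\mathsf p_0(\la^{x_0})\prod_{i=1}^k \mathsf p_{x_{i-1},x_i}(\la^{x_{i-1}},\la^{x_i})$; by standard Markov chain criteria this simultaneously identifies the initial distribution, the transition kernel, and establishes the Markov property.

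Starting from \eqref{eq:deftwolayerSchur}, I would first sum out all the $\la^j$ with $j\notin \{x_0,\dots,x_k\}$ using the branching rule \eqref{eq:branchingrule}. Between two consecutive marginal points, this collapses $\prod_{i=x_{j-1}+1}^{x_j} s_{\la^i/\la^{i-1}}(a_i)$ into the single skew Schur $s_{\mu^j/\mu^{j-1}}(a_{x_{j-1}+1},\dots,a_{x_j})$ (writing $\mu^j=\la^{x_j}$). The remaining tail sum over $\la^{x_k+1},\dots,\la^N$ collapses to $s_{\la^N/\mu^k}(a_{x_k+1},\dots,a_N)$, and then the sum over $\la^N$ against $c_2^{\la_1^N-\la_2^N}$ — after the change of variable $\la^N\mapsto \la^N-(\mu_2^k,\mu_2^k)$ and shift invariance of skew Schur — equals $q_{N-x_k}(\mu_1^k-\mu_2^k)/\prod_{j=x_k+1}^N(1-a_j)^2$ by the definition \eqref{eq:defq}.

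This yields
\begin{equation*}
\mathbb P(\la^{x_0}=\mu^0,\dots,\la^{x_k}=\mu^k) = \frac{c_1^{\mu_1^0}\,\mathds{1}_{\mu_2^0=0}}{\mathcal Z^{\vec a,c_1,c_2}_{\rm Geo}(N)\,\prod_{j=x_k+1}^N(1-a_j)^2}\,\prod_{i=1}^k s_{\mu^i/\mu^{i-1}}(a_{x_{i-1}+1},\dots,a_{x_i})\,q_{N-x_k}(\mu_1^k-\mu_2^k).
\end{equation*}
On the other hand, in $\mathsf p_0(\mu^0)\prod_{i=1}^k \mathsf p_{x_{i-1},x_i}(\mu^{i-1},\mu^i)$ the ratios $h_{x_i,N}/h_{x_{i-1},N}$ telescope to $h_{x_k,N}(\mu_1^k-\mu_2^k)/h_{0,N}(\mu_1^0)$, the second factor cancelling the $h_{0,N}(\mu_1^0)$ inside $\mathsf p_0$; writing $h_{x_k,N}$ back in terms of $q_{N-x_k}$ via \eqref{eq:defh} and collecting the $(1-a_j)^2$ prefactors reduces the agreement between the two expressions to the single identity
\begin{equation*}
\mathcal Z^{\vec a,c_1,c_2}_{\rm Geo}(N)\,\prod_{j=1}^N(1-a_j)^2 \;=\; \sum_{\ell\geq 0}c_1^{\ell}\,q_N(\ell).
\end{equation*}
This last equality is itself obtained by the same branching-rule-plus-shift-invariance computation applied directly to the normalization sum in \eqref{eq:deftwolayerSchur}, where one sums first over all $\la^i$ with $1\leq i\leq N-1$, then over $\la^N$ against $c_2^{\la_1^N-\la_2^N}$, and finally over $\la^0_1\geq 0$ against $c_1^{\la_1^0}$.

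The only minor subtlety, and the sole place where care is required, is the bookkeeping of the shifts $(\mu_2^k,\mu_2^k)$ when invoking shift invariance of skew Schur functions to identify the $c_2$-weighted tail sum with $q_{N-x_k}$; no identity beyond the branching rule \eqref{eq:branchingrule}, the shift property \eqref{eq:shiftproperty}, and the definition \eqref{eq:defq} is used. As a consistency check one may also verify directly that $\sum_\mu \mathsf p_{x,y}(\la,\mu)=1$, which follows from the same manipulation: summing $s_{\mu/\la}$ against $q_{N-y}(\mu_1-\mu_2)$ produces $q_{N-x}(\la_1-\la_2)/\prod_{j=x+1}^y(1-a_j)^2$, which cancels exactly against the factors defining $\mathsf p_{x,y}$ and $h_{x,N}$.
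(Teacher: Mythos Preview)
Your proof is correct and follows essentially the same approach as the paper: verify that $\mathsf p_0(\mu^0)\prod_{i=1}^k \mathsf p_{x_{i-1},x_i}(\mu^{i-1},\mu^i)$ matches the marginal distribution \eqref{eq:marginaldistributionSchur} via the telescoping of the $h_{x,N}$ ratios. The paper's proof is a one-sentence appeal to this simplification (and implicitly takes $x_k=N$, where $q_0(\ell)=c_2^\ell$ makes the matching immediate), whereas you spell out the general case $x_k\leq N$ together with the tail sum and the normalization identity; this is the same argument with more bookkeeping made explicit.
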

\begin{proof}
It suffices to notice that for any $0=x_0<x_1<\dots<x_k$,  the product 
$$\mathsf p_0(\mu^0) \prod_{i=1}^k\mathsf p_{x_{i-1},x_i}(\mu^{i-1}, \mu^i),$$
simplifies and matches with the marginal distribution of the two-layer Schur process \eqref{eq:marginaldistributionSchur}. 
\end{proof}
The Markovian description can even be generalized to the more general two-layer Schur processes in Definition \ref{def:generaltwolayerSchur}. Let 
$$ \mathsf p_x^{\rightarrow}(\lambda, \mu) = (1-a_x)^2s_{\mu/\lambda}(a_{x})\frac{h_{x,N}(\mu_1-\mu_2)}{h_{x-1,N}(\lambda_1-\lambda_2)} $$
 and 
 $$ \mathsf p_x^{\downarrow}(\lambda, \mu) = (1-a_x)^2s_{\lambda/\mu}(a_{x})\frac{h_{x,N}(\mu_1-\mu_2)}{h_{x-1,N}(\lambda_1-\lambda_2)}. $$
\begin{proposition}
The process $x\mapsto \lambda^x$ from Definition \ref{def:generaltwolayerSchur}, associated with a word $w\in \lbrace \rightarrow, \downarrow\rbrace^N$, is a (time-inhomogeneous) Markov process with kernel $\mathsf p_x^{w_x}(\lambda^{x-1},\lambda^x)$ and initial distribution $\mathsf p_0(\la^0)$.
\label{prop:Markoviangeneral}
\end{proposition}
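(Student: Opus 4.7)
The strategy is to extend the argument of Proposition~\ref{prop:markoviandescription} verbatim, with the function $h_{x,N}$ replaced by an analogue depending on the word $w$. The first step is to give the correct definition: for each $0\leq x\leq N$ and $\ell\in\Z_{\geq 0}$, I would set
\begin{equation*}
h^w_{x,N}(\ell):=\frac{1}{\mathcal Z^{\vec a,c_1,c_2}_{\rm Geo}(N)}\prod_{i=x+1}^N(1-a_i)^2\sum_{\lambda^{x+1},\dots,\lambda^N}c_2^{\lambda_1^N-\lambda_2^N}\prod_{\substack{i>x\\ w_i=\rightarrow}}s_{\lambda^i/\lambda^{i-1}}(a_i)\prod_{\substack{i>x\\ w_i=\downarrow}}s_{\lambda^{i-1}/\lambda^i}(a_i),
\end{equation*}
where the sum is taken with $\lambda^x=(\ell,0)$ fixed. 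The shift invariance of skew Schur functions, together with the fact that the boundary weight $c_2^{\lambda_1^N-\lambda_2^N}$ only sees differences of coordinates, ensures the right-hand side depends on $\lambda^x$ only through $\lambda_1^x-\lambda_2^x$, so the notation $h^w_{x,N}(\ell)$ is consistent. Two boundary identities are immediate: $h^w_{N,N}(\ell)=c_2^\ell/\mathcal Z^{\vec a,c_1,c_2}_{\rm Geo}(N)$ (the tail sum is empty), and $\sum_{\ell\geq 0}c_1^\ell h^w_{0,N}(\ell)=1$ by the very definition of the partition function, so that $\mathsf p_0$ is a bona fide probability distribution.

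Next I would verify the backward recursion
\begin{equation*}
h^w_{x-1,N}(\lambda_1-\lambda_2)=(1-a_x)^2\sum_{\mu\in\Sign_2}s_{\mu/\lambda}(a_x)\,h^w_{x,N}(\mu_1-\mu_2)\quad\text{when }w_x=\rightarrow,
\end{equation*}
and the analogous identity with $s_{\lambda/\mu}(a_x)$ in place of $s_{\mu/\lambda}(a_x)$ when $w_x=\downarrow$. Both are instances of the branching rule \eqref{eq:branchingrule} applied to split off the step at position $x$ from the tail sum. Read as $\sum_\mu \mathsf p_x^{w_x}(\lambda,\mu)=1$, these identities exhibit $\mathsf p_x^{w_x}$ as a genuine transition kernel.

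The final step is to identify the joint density. In the product $\mathsf p_0(\lambda^0)\prod_{x=1}^N \mathsf p_x^{w_x}(\lambda^{x-1},\lambda^x)$, the ratios $h^w_{x,N}(\lambda_1^x-\lambda_2^x)/h^w_{x-1,N}(\lambda_1^{x-1}-\lambda_2^{x-1})$ telescope, leaving only the numerator $h^w_{N,N}(\lambda_1^N-\lambda_2^N)=c_2^{\lambda_1^N-\lambda_2^N}/\mathcal Z^{\vec a,c_1,c_2}_{\rm Geo}(N)$ from the last transition, while the copy of $h^w_{0,N}$ carried by $\mathsf p_0$ cancels the denominator of $\mathsf p_1^{w_1}$. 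Combined with the prefactor $c_1^{\lambda_1^0-\lambda_2^0}\mathds{1}_{\lambda_2^0=0}$ from $\mathsf p_0$ and the $(1-a_i)^2$-weighted product of (skew) Schur factors coming from the kernels, the outcome is precisely the density of Definition~\ref{def:generaltwolayerSchur}. Marginalising over any subset of the $\lambda^i$ then exhibits the stated Markov property. The only part requiring mild care is the bookkeeping of $(1-a_i)^2$ factors and the appeal to shift invariance needed to treat $h^w_{x,N}$ as a function of the single variable $\lambda_1^x-\lambda_2^x$; I do not expect a substantive obstacle.
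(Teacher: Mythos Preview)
Your telescoping argument is correct, but it proves a slightly different statement than the one asserted. The kernels $\mathsf p_x^{\rightarrow}$ and $\mathsf p_x^{\downarrow}$ in the proposition are defined using the function $h_{x,N}$ from \eqref{eq:defh}--\eqref{eq:defq}, which is built from the \emph{horizontal} tail sum (all steps $\rightarrow$) and does not depend on the word $w$. Your argument, by contrast, introduces a word-dependent function $h^w_{x,N}$ and establishes the Markov property for kernels built from that function. To match the proposition as stated you still need the identity $h^w_{x,N}=h_{x,N}$, i.e.\ that the tail sum \eqref{eq:sumarbitrarypath} is independent of the shape of the path.

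This is exactly the non-trivial content of the paper's proof, and it does not follow from the branching rule alone. One needs the skew Cauchy identity \eqref{eq:skewCauchysignatures} and the skew Littlewood identity \eqref{eq:skewLittlewoodsignatures} for $\Sign_2$: successive applications of these commutation relations let one flip each $\downarrow$ step in the tail to a $\rightarrow$ step without changing the value of the sum, ultimately reducing to the horizontal case defining $q_{N-x}$. Without this step your backward recursion for $h^w$ is not a recursion for $h$, and the kernels you construct are a priori different from the $\mathsf p_x^{w_x}$ of the statement.
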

\begin{proof}
Again,  this relies on the same simplifications of products of ratios of functions $h_{x,N}$ as in the proof of Proposition \ref{prop:markoviandescription}, as well as the following property. For any $0\leq x\leq N$, the sum 
\begin{equation} \frac{1}{\mathcal Z^{\vec a, c_1,c_2}_{\rm Geo}(N)}  \sum_{\la^{x+1}, \dots, \la^N\in\Sign_2} c_2^{\lambda_1^N-\lambda_2^N} \prod_{\footnotesize \begin{matrix}
		x+1\leq i\leq N \\ w_i=\rightarrow
\end{matrix}} s_{\lambda^i/\lambda^{i-1} }(a_i)\prod_{\footnotesize \begin{matrix}
		x+1\leq i\leq N \\ w_i=\downarrow
\end{matrix}} s_{\lambda^{i-1}/\lambda^{i} }(a_i)
\label{eq:sumarbitrarypath}
\end{equation}
does not depend on the $w_i$ and equals $h_{x,N}(\la_1^x-\la^x_2)$, which,  by definition \eqref{eq:defh},   corresponds to the sum \eqref{eq:sumarbitrarypath} in the case where $w_x=\rightarrow$ for all $x$. As in the proof of \cite[Proposition 2.15]{barraquand2023stationary}, one uses the Cauchy and Littlewood identities \cite[Propositions 2.6 and 2.7]{barraquand2023stationary}: For any $\mu,\nu\in\Sign_2$ and $a,b$ such that $\vert ab\vert<1$, 
\begin{equation}
	\sum_{\la\in \Sign_2} s_{\la/\mu}(a)s_{\la/\nu}(a) = \sum_{\kappa\in \Sign_2}s_{\mu/\kappa}(b)s_{\nu/\kappa}(a)
	\label{eq:skewCauchysignatures}
\end{equation} 
and for $\mu\in \Sign_2$ and $a,c$ such that $\vert ac\vert<1$, 
\begin{equation}
	\sum_{\la\in \Sign_2} c^{\lambda_1-\lambda_2} s_{\la/\mu}(a) = \sum_{\kappa\in \Sign_2} c^{\kappa_1-\kappa_2}s_{\mu/\kappa}(a).
	\label{eq:skewLittlewoodsignatures}
\end{equation}
Successive applications of identities \eqref{eq:skewCauchysignatures} and \eqref{eq:skewLittlewoodsignatures} allow to transform the sum \eqref{eq:sumarbitrarypath} into 
\begin{multline} \frac{1}{\mathcal Z^{\vec a, c_1,c_2}_{\rm Geo}(N)}   \sum_{\la^{x+1}, \dots, \la^N} c_2^{\lambda_1^N-\lambda_2^N} \prod_{
		x+1\leq i\leq N } s_{\lambda^i/\lambda^{i-1} }(a_i) = \\  \sum_{\la^N\in \Sign_2} c_2^{\lambda_1^N-\lambda_2^N} s_{\lambda^N/\lambda^x}(a_{x+1},\dots, a_N) = \frac{q_{N-x}(\lambda^x_1-\lambda^x_2)}{\mathcal Z^{\vec a, c_1,c_2}_{\rm Geo}(N)} =   h_{x,N}(\lambda^x_1-\lambda^x_2).
\end{multline}
Hence, we obtain that for all $0\leq k\leq N$,  the product 
$$\mathsf p_0(\mu^0) \prod_{x=1}^k\mathsf p_{x}(\mu^{i-1}, \mu^i)$$
coincides with with the marginal distribution of $\la^0,\dots,\la^k$ in \eqref{eq:deftwolayerSchur}, which concludes the proof.
\end{proof}

 \subsubsection{Application}
Before giving an application of the Markovian description above, let us first  establish an integral formula for the function $q_M$ arising in the definition of the function $h_{x,N}$. 
\begin{proposition} For $c_1, c_2<1$, and any $\ell\in \Z_{\geq 0}$, we have
	\begin{equation}
		q_M(\ell) = \frac{1}{2} \oint \frac{dz}{2\I\pi z} \left(z^{\ell+1} - \frac{1}{z^{\ell+1}} \right)\left(\frac{1}{z} - z \right) \frac{1}{1-c_2z}\frac{1}{1-c_2/z}\prod_{i=N-M+1}^N\left(\frac{1-a_i}{1-a_iz}\frac{1-a_i}{1-a_i/z} \right),
		\label{eq:expressionqgeneral}
	\end{equation}
	where the contour is a circle of radius $1$ around the origin. 
	If $c_1>1$ or $c_2>1$, $q_M(\ell)$ is given by the analytic continuation of the formula above. 
	\label{prop:formulaqgeneral}
\end{proposition}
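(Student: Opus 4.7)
The strategy is to convert the sum defining $q_M(\ell)$ into a double contour integral, carry out one Littlewood-type summation in closed form, and then reduce the resulting double integral to a single one via a residue computation.

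First, I would apply Lemma~\ref{lem:skewSchur} with $n=2$ to write
$$s_{\la/(\ell,0)}(a_{N-M+1},\dots,a_N) = \oint\!\oint \frac{dz_1\,dz_2}{(2\I\pi)^2 z_1z_2}\,\Delta(z)\,s_{\la}(z^{-1})\,s_{(\ell,0)}(z)\prod_{i=N-M+1}^N\frac{1}{(1-z_1a_i)(1-z_2a_i)},$$
with $|z_1|=|z_2|=R$, the radius $R$ chosen so that $1<R<\min\bigl(1/c_2,\,1/\max_i a_i\bigr)$; this is possible under the hypothesis $c_1,c_2<1$ together with $a_ic_2<1$. After swapping sum and integral (legitimate since $R>1$ and $R>c_2$ make both the Littlewood sum and the Schur expansion absolutely convergent), the remaining sum over $\lambda$ is handled by the $n=2$ Littlewood identity \eqref{eq:Littlewood} with $c=c_2$, giving
$$\sum_{\la\in\Signplus_2} s_\la(z_1^{-1},z_2^{-1})\,c_2^{\la_1-\la_2}=\frac{1}{(1-c_2/z_1)(1-c_2/z_2)(1-1/(z_1z_2))}.$$
Note that $s_{\la/(\ell,0)}$ vanishes unless $(\ell,0)\subseteq\la$, so the sum over $\Sign_2$ in \eqref{eq:defq} is effectively over $\Signplus_2$.

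Next, I would clear the $z_1,z_2,z_1z_2$ factors using the identities
$$\Delta(z_1,z_2)s_{(\ell,0)}(z_1,z_2)=\frac{-(z_1-z_2)(z_1^{\ell+1}-z_2^{\ell+1})}{2z_1z_2},\qquad \frac{1}{z_j(1-c_2/z_j)}=\frac{1}{z_j-c_2},\qquad \frac{1}{1-1/(z_1z_2)}=\frac{z_1z_2}{z_1z_2-1},$$
to rewrite the integrand in the tidy form
$$I(z_1,z_2)=\frac{-(z_1-z_2)(z_1^{\ell+1}-z_2^{\ell+1})}{2(z_1-c_2)(z_2-c_2)(z_1z_2-1)}\prod_i\frac{1}{(1-z_1a_i)(1-z_2a_i)}.$$
Now evaluate the $z_2$-integral by residues. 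Inside $|z_2|=R$, the only poles are at $z_2=c_2$ and at $z_2=1/z_1$ (the poles at $1/a_i$ lie outside, and $z_2=0$ is regular after the simplification).

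The key observation is then that the residue at $z_2=c_2$ contributes zero after the $z_1$-integration: the $(z_1-c_2)$ factor in the numerator cancels the pole at $z_2=c_2$, leaving an integrand in $z_1$ whose only singularities are at $z_1=1/c_2$ and $z_1=1/a_i$, all of which lie outside $|z_1|=R$. The residue at $z_2=1/z_1$, after simplifying $z_1(1/z_1-c_2)=1-c_2z_1$, gives exactly
$$-\frac{1}{2}\oint\frac{dz_1}{2\I\pi}\,\frac{(z_1-1/z_1)(z_1^{\ell+1}-z_1^{-\ell-1})}{(z_1-c_2)(1-c_2z_1)}\prod_i\frac{1}{(1-z_1a_i)(1-a_i/z_1)},$$
which, after reinstating the prefactor $C=\prod_i(1-a_i)^2$ and using $(z^{-1}-z)=-(z-z^{-1})$ together with $(1-c_2z)(1-c_2/z)=(1-c_2z)(z-c_2)/z$, matches \eqref{eq:expressionqgeneral}. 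Finally, the contour $|z_1|=R$ can be deformed to $|z|=1$ since the annulus $1\leq|z|\leq R$ contains no poles of the integrand under our choice of $R$. The main obstacle in this plan is the careful bookkeeping of convergence and contour choices: one has to keep $1<R<\min(1/c_2,1/\max a_i)$ throughout so that both the Littlewood and Cauchy-type series converge, and so that the final residue computation picks out exactly the right poles. The case $c_1>1$ or $c_2>1$ is handled by analytic continuation, using that $q_M(\ell)$ depends on $c_2$ as a power series in $c_2$ in the region $c_2<1$.
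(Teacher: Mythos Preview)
Your proposal is correct and follows essentially the same route as the paper: apply Lemma~\ref{lem:skewSchur} for $n=2$, sum over $\lambda$ via the Littlewood identity, and reduce the double integral to a single one by showing that only the residue at $z_2=1/z_1$ survives. The paper is terser, invoking ``the same reason as in the proof of Theorem~\ref{theo:formulaSchur}'' for why the $z_2=c_2$ residue contributes nothing, whereas you spell this out directly; your phrase ``the $(z_1-c_2)$ factor in the numerator cancels the pole at $z_2=c_2$'' should read ``\dots cancels the pole at $z_1=c_2$'' (in the resulting $z_1$-integrand), but the underlying argument is sound.
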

\begin{proof}
In \eqref{eq:defq}, we may express the Schur function $s_{\la/(\ell,0)}$ using Proposition \ref{lem:skewSchur}. Summing over $\la$ using the Littlewood identity \eqref{eq:Littlewood}, we obtain 
$$ q_M(\ell) =  \oint\!\!\!\oint \nu(dz)s_{(\ell,0)}(z_1,z_2) \frac{1}{1-1/z_1z_2}\frac{1}{(1-c_2/z_1)(1-c_2/z_2)}\prod_{i=N-M+1}^N\frac{(1-a_i)^2}{(1-a_iz_1)(1-a_iz_2)},$$
where the contour is a circle around zero with radius slightly larger than $1$. Then, for the same reason as in the proof of Theorem \ref{theo:formulaSchur}, the integral over $z_2$ is equal to the residue at $z_2=1/z_1$. Computing this residue and using that 
$$ s_{(\ell,0)}(z_1,z_2)  = \frac{z_1^{\ell+1}-z_2^{\ell+1}}{z_1-z_2}, $$ 
we obtain \eqref{eq:expressionqgeneral}. 
\end{proof}

Using the explicit expression above, one can now study limits as $N$ goes to infinity. 
\begin{proposition}
Assume $a_i\equiv a\in (0,1)$ and $c_2<1$. As $M\to \infty$, 
\begin{equation*}
q_M(\ell) \sim_{M\to\infty} \frac{1}{(1-c_2)^2}\frac{1}{2\sqrt{\pi}}\left( \frac{(1-a)^2}{a M}\right)^{3/2} (\ell+1)
\end{equation*}
so that  as $N\to\infty$,
\begin{equation}
	h_{x,N}(\ell)\xrightarrow[N\to\infty]{}(\ell+1)(1-c_1)^2.
\end{equation}	
\label{prop:limitq}
\end{proposition}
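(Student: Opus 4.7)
The plan is to analyze the integral representation \eqref{eq:expressionqgeneral} of $q_M(\ell)$ by a standard saddle-point / Laplace argument on the unit circle, then combine numerator and denominator in the definition \eqref{eq:defh} of $h_{x,N}$.

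First, I would parameterize $z = e^{\I\theta}$ with $\theta \in [-\pi,\pi]$. Using $z^{\ell+1} - z^{-\ell-1} = 2\I\sin((\ell+1)\theta)$, $z^{-1} - z = -2\I\sin\theta$, and the even parity of the remaining factors (with $a_i \equiv a$), formula \eqref{eq:expressionqgeneral} becomes
\begin{equation*}
q_M(\ell) = \frac{2}{\pi}\int_0^\pi \sin((\ell+1)\theta)\sin\theta\, F(\theta)\, g(\theta)^M\, d\theta,
\end{equation*}
where $F(\theta) = (1 - 2c_2\cos\theta + c_2^2)^{-1}$ and $g(\theta) = (1-a)^2/(1-2a\cos\theta + a^2)$. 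Note that $g$ is continuous on $[0,\pi]$, attains its unique maximum at $\theta = 0$ with $g(0) = 1$, and satisfies $\log g(\theta) = -\tfrac{a}{(1-a)^2}\theta^2 + O(\theta^4)$ as $\theta \to 0$.

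Next, I would perform the Laplace analysis. For fixed $\delta \in (0,\pi)$ the contribution from $|\theta| \geq \delta$ is bounded by $C g(\delta)^M$ and hence exponentially small in $M$. On the neighborhood $|\theta| < \delta$ I would substitute $\theta = (1-a)u/\sqrt{aM}$; to leading order $\sin((\ell+1)\theta)\sin\theta \sim (\ell+1)\theta^2$, $F(\theta) \to (1-c_2)^{-2}$, and $g(\theta)^M \to e^{-u^2}$, giving
\begin{equation*}
q_M(\ell) \sim \frac{2(\ell+1)(1-a)^3}{\pi(1-c_2)^2(aM)^{3/2}}\int_0^\infty u^2 e^{-u^2}\,du = \frac{\ell+1}{(1-c_2)^2}\,\frac{1}{2\sqrt\pi}\left(\frac{(1-a)^2}{aM}\right)^{3/2},
\end{equation*}
using $\int_0^\infty u^2 e^{-u^2}du = \sqrt\pi/4$. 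This is the first claimed asymptotic.

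For the limit of $h_{x,N}$, the definition \eqref{eq:defh} reads $h_{x,N}(\ell) = q_{N-x}(\ell)/\sum_{\ell'\geq 0} c_1^{\ell'}q_N(\ell')$. Applying the previous step, the prefactor $\tfrac{1}{2\sqrt\pi(1-c_2)^2}((1-a)^2/a)^{3/2}$ and the factor $(N-x)^{-3/2}/N^{-3/2} \to 1$ cancel, so that
\begin{equation*}
h_{x,N}(\ell) \xrightarrow[N\to\infty]{} \frac{\ell+1}{\sum_{\ell'\geq 0}(\ell'+1)c_1^{\ell'}} = (\ell+1)(1-c_1)^2,
\end{equation*}
since $\sum_{\ell'\geq 0}(\ell'+1)c_1^{\ell'} = (1-c_1)^{-2}$. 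The one technical point here, which I expect to be the main obstacle, is to justify exchanging the limit with the sum $\sum_{\ell'\geq 0}c_1^{\ell'}q_N(\ell')$ in the denominator. For this I would use the refined bound $|\sin((\ell+1)\theta)| \leq \min((\ell+1)|\theta|, 1)$ and split the trigonometric integral at $\theta = 1/(\ell+1)$, which yields a uniform estimate of the form $N^{3/2} q_N(\ell) \leq C(\ell+1)$ for all $\ell \geq 0$ and $N$ large. Since $\sum_{\ell'\geq 0}(\ell'+1)c_1^{\ell'} < \infty$ for $c_1 < 1$, dominated convergence applies and the limit follows.
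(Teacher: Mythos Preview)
Your proof is correct and follows essentially the same route as the paper: a Laplace/saddle-point analysis of the integral \eqref{eq:expressionqgeneral} on the unit circle, localized at $z=1$ with scaling $\theta\sim M^{-1/2}$, followed by the evaluation of a Gaussian integral. The paper's proof is terser---it writes $z=e^{\I x/\sqrt{M}}$ and jumps directly to $\int_{\R}x^2 e^{-ax^2/(1-a)^2}\,dx$---but the content is the same. Your version is in fact more careful on one point the paper glosses over entirely: the justification, via a uniform bound $N^{3/2}q_N(\ell')\leq C(\ell'+1)$ and dominated convergence, of the interchange of limit and sum in the denominator $\sum_{\ell'\geq 0}c_1^{\ell'}q_N(\ell')$.
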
 
\begin{proof}
In \eqref{eq:expressionqgeneral}, one may perform the change of variables $z=e^{i x/\sqrt{M}}$, so that as $M$ going to infinity, 
$$ q_M(\ell) \sim_{M\to\infty} \frac{1}{(1-c_2)^2}\frac{\ell+1}{\pi M^{3/2}} \int_{\mathbb R} x^2 e^{\frac{-a x^2}{(1-a)^2}}dx.$$
The integral can be calculated explicitly and leads to the  two limits above. 
\end{proof}
The above proposition shows that as $N$ goes to infinity, the process $x\mapsto \la^x$ (defined by \eqref{eq:deftwolayerSchur}, in the case $a_i\equiv a$, $c_2<1$) weakly converges to the Markov process with transition probability 
\begin{equation}
	 \overline{\mathsf p}(\la^{x},\la^{x+1})  = (1-a)^2 s_{\la^{x+1}/\la^x}(a) \frac{\la_1^{x+1}-\la_2^{x+1}+1}{\la_1^x-\la_2^x+1} =  (1-a)^2 s_{\la^{x+1}/\la^x}(a) \frac{s_{\lambda^{x+1}}(1,1)}{s_{\lambda^{x}}(1,1)}
	 \label{eq:limitDoobtransform}
\end{equation}
 and initial distribution 
 $$ \overline{\mathsf p}_0(\la^0) = (1-c_1)^2 c_1^{\la_1^0-\la_2^0} (\la_1^0-\la_2^0+1)\mathds{1}_{\la_2=0},$$
 that is, $\la_2^0=0$ and  $\la_1^0$ is distributed as the sum of two independent $\mathrm{Geom}(c_1)$ random variables. Furthermore, the factor $ (1-a)^2 s_{\la^{x+1}/\la^x}(a)$ in  \eqref{eq:limitDoobtransform} can be interpreted as the transition kernel for a couple of independent random walks $(\la_1^x, \la_2^x) $ with $\mathrm{Geom}(a)$ increments, killed when $\la_2^x>\la_1^{x-1}$. Then, $\overline{\mathsf p}(\la^{x},\la^{x+1})$ can be interpreted as the transition  probability of the same killed random walk conditioned to survive forever \cite{o2003conditioned}. 
 
 In \cite{barraquand2023stationaryloggamma}, stationary models of last passage percolation and the log-gamma polymer in a half-space were studied. In the last passage percolation case, \cite[Sec. 3]{barraquand2023stationaryloggamma} defines a stationary last passage percolation model $G_{r,s}^{\rm stat}(n,m)$, stationary  in the sense that  the distribution of the process $k\mapsto G_{r,s}^{\rm stat}(m+k,m)-G_{r,s}^{\rm stat}(m,m)$ is the same for each $m\geq 2$. In the case $m=2$, $G_{r,s}^{\rm stat}(m+k,m)$ is the last passage time along  the second row  of a  model of last passage percolation in a half-quadrant. Such last passage percolation times can be seen as (a certain limit of) a  marginal of a Pfaffian Schur process \cite{borodin2005eynard} (see in particular \cite[Proposition 3.10]{baik2018pfaffian}).

 When $r=c_1$ and  $s=1$, the connection with the Pfaffian Schur process shows that  $G_{r,s}^{\rm stat}(2+k,2) - G_{r,s}^{\rm stat}(2,2)$ has the same distribution as a   process $k\mapsto \mu_1^{k}$ where  $(\mu_1^{k}, \mu_2^k)$ is a Markov process with kernel  \eqref{eq:limitDoobtransform} (the form of the transition kernel is implied by results of \cite{o2003conditioned}), and using the definition of $G_{r,s}$  in \cite{barraquand2023stationaryloggamma}, the initial condition is such that $\mu_1^{0}$ is the sum of two independent geometric random variables with parameter $c_1$,  and $\mu_2^0=0$. Thus, this shows that the stationary measure of geometric LPP in a strip converges to a stationary measure of LPP in a half-quadrant (here in the regime $c_1,c_2<1$, though we believe that other regimes could be studied with a similar approach).

\section{Two-layer Whittaker process} 
\label{sec:Whittaker}

\subsection{Background on (skew) Whittaker functions}  Unlike Schur functions, Whittaker functions cannot be defined as determinants, but they satisfy a branching rule that is very similar with that of Schur functions. Let us define for $\alpha\in \mathbb C$ and $x, y\in \mathbb R^n$, the kernels 
\begin{equation*}
	\Psi^{(n)}_{\alpha}(x/y):= \exp\left( -\sum_{i=1}^n \alpha(x_i-y_i) -\sum_{i=1}^n  e^{-(x_i-y_i)} - \sum_{i=1}^{n-1} e^{-(y_i-x_{i+1})}\right)
\end{equation*}
and for $x\in \mathbb R^n, y\in \mathbb R^{n-1}$, 
\begin{equation*}
	\Psi^{(n,n-1)}_{\alpha}(x/y):= \exp\left( -\alpha\left( \sum_{i=1}^nx_i - \sum_{i=1}^{n-1} y_i\right)   -\sum_{i=1}^{n-1}  e^{-(x_i-y_i)} - \sum_{i=1}^{n-1} e^{-(y_i-x_{i+1})}\right).
\end{equation*}
The usual conventions in the literature are such that when dealing with Whittaker functions, the roles of indices and variables are switched compared to the use of Schur functions. Hence, the argument $x/y$ above is analogous to the skew partitions $\lambda/\mu$ in the previous section.

The kernels $\Psi^{(n)}_{\alpha}(x/y)$ and $\Psi^{(n,n-1)}_{\alpha}(x/y)$ are sometimes referred to as Baxter operators \cite{gerasimov2008baxter}. Various conventions are used in the literature when defining Whittaker functions. In particular, compared to \cite{corwin2014tropical,o2014geometric}, our variables $x_i$ and $y_i$ are logarithms of the variables used there. 

We may then define (class-one, $\mathfrak gl_{n}(\R)$)-Whittaker functions, for $\boldsymbol{\alpha}=(\alpha_1, \dots, \alpha_n)\in \mathbb C^n$ and $x=x^n\in \mathbb R^n$, as  
\begin{equation}
	\Psi_{\boldsymbol{\alpha}}(x) = \int_{\mathbb R^{\frac{n(n-1)}{2}}} dx^{1}\dots dx^{n-1} \prod_{i=1}^n \Psi^{(i,i-1)}_{\alpha_i}(x^i / x^{i-1}).
	\label{eq:defWhittakerGivental}
\end{equation}
This is sometimes referred to as Givental's \cite{givental1997stationary} integral representation of Whittaker functions.  Although it is not obvious from this formula,  $\Psi_{\boldsymbol{\alpha}}(x)$ is symmetric in the $\alpha_i$. Viewed as functions of the variable $x$, for $\alpha_i$ such that $\Re[\alpha_i]>0$, Whittaker functions  have exponential decay at infinity in the Weyl chamber $x_1<\dots<x_n$  and doubly exponential decay at infinity away from the Weyl chamber (see \cite[Section 4]{borodin2014macdonald} for a precise statement and additional references).

 We also define, for $(\alpha_1, \dots, \alpha_k)\in \mathbb C^k$ and $x,y\in \mathbb R^n$,  \textit{skew} Whittaker functions via the branching rule
\begin{equation}
	\Psi_{\alpha_1, \dots, \alpha_k}^{(n)}(x/y) = \int_{\mathbb R^{n(k-1)}} dx^1 \dots dx^{k-1} \prod_{i=1}^k  \Psi^{(n)}_{\alpha_i}(x^{i}/x^{i-1}),
	\label{eq:defskewWhittaker}
\end{equation}
under the convention that $x^{k}=x$ and $x^0=y$.

We will also use dual Whittaker functions (the term dual is used by analogy with the theory of Macdonald symmetric functions; furthermore, they will be indeed a dual family with respect to the inner product below). For $\boldsymbol{\alpha}\in \C^n$ and $x\in \mathbb R^n$, they are simply defined as $\Psi^*_{\boldsymbol{\alpha}}(x)=e^{-e^{-x_n}}\Psi_{\boldsymbol{\alpha}}(x)$.  For $x\in \R^n$ and $\boldsymbol\beta=(\beta_1, \dots, \beta_m)\in \mathbb C^m$ with $m\geq n$, we extend the definition of (dual) Whittaker functions through the branching rule
\begin{equation}
	\Psi^*_{\boldsymbol{\beta}}(x) = \int_{\mathbb R^{n}}   \Psi^{(n)}_{\beta_1, \dots, \beta_{m-n}}(x / y) \Psi^*_{\beta_{m-n+1}, \dots, \beta_m}(y)dy.
	\label{eq:defWhittakerfunctiongeneral}
\end{equation}

Whittaker functions satisfy an analogue of the Cauchy identity: for $\Real[\alpha_i+\beta_j]>0$ and $m\geqslant n$, 
\begin{equation}
	\int_{\mathbb R^n} \Psi_{\boldsymbol{\alpha}}(x)\Psi^*_{\boldsymbol{\beta}}(x)dx  = \prod_{i=1}^n\prod_{j=1}^m \Gamma(\alpha_i+\beta_j).
	\label{eq:CauchyWhittaker}
\end{equation}
This identity was conjectured in \cite{bump1984fourier}  in the case $n=m$,  and proved in \cite{stade2001mellin, stade2002archimedean}. The case $m>n$ is proved in \cite[Corollary 3.5]{o2014geometric} using a probabilistic argument based on the geometric RSK correspondance. Whittaker functions also satisfy an analogue of the Littlewood identity \cite[Corollary 5.2]{o2014geometric}: for $s>0$,  $\Real[\alpha_i+\alpha_j]>0$ and $u\in \R$ such that $\Real[\alpha_i+u]>0$, 
\begin{equation}
	\int_{\mathbb R^n} \Psi_{\boldsymbol{\alpha}}(x) e^{-s e^{-x_n}} e^{-u(x_1-x_2+x_2-x_3+\dots)}dx = s^{-\sum_{i=1}^n \alpha_i}\prod_{i=1}^n \Gamma(\alpha_i+u)\prod_{1\leq i<j\leq n}\Gamma(\alpha_i+\alpha_j).
	\label{eq:LittlewoodWhittaker}
\end{equation}

Finally, let us mention another important property. Seen as functions of the variable  $z\in \I\mathbb R^n $, the functions $\Psi_z(x)$ form an orthonormal family with respect to the scalar product 
\begin{equation}
	\langle f,g\rangle_n = \int_{\I\mathbb R} \frac{dz_1}{2\I\pi} \dots  \int_{\I\mathbb R} \frac{dz_1}{2\I\pi} \Delta(z) f(z)\overline{g(z)}
\end{equation}
where now 
\begin{equation}
	\Delta(z) = \frac{1}{n!} \prod_{i\neq j} \frac{1}{\Gamma(z_i-z_j)}. 
	\label{eq:defDeltaWhittaker}
\end{equation}
\subsection{General definition of the two-layer open Whittaker process}
As for the two-layer Schur process, the two-layer Whittaker process in general depends on a down-right path $\mathcal P$ as in Figure \ref{fig:downrightpaths}, encoded by a word $w\in \lbrace \rightarrow, \downarrow \rbrace^N$. 
\begin{definition}
	Consider an integer $N\geq 1$, parameters $\alpha_1, \dots, \alpha_N\in (0,1)$ and boundary parameters $u,v\in \R$ such that $u+v>0$ and $u+\alpha_i>0$ for all $i,j$. Let $w\in \lbrace \rightarrow, \downarrow \rbrace^N$. The two-layer open Whittaker  process is a probability measure on sequences  $\boldsymbol\lambda=(\lambda^{0}, \dots, \lambda^{N}) \in \R^{2N+1}$ with density 
	\begin{equation*}
		\mathbb P(\boldsymbol{\lambda}) = \frac{1}{\mathcal Z_{\rm LG}^{\boldsymbol{\alpha}, u,v}(N)} e^{-u(\lambda_1^0-\lambda_2^0)}e^{-v(\lambda_1^N-\lambda_2^N)} \prod_{\footnotesize \begin{matrix}
				1\leq i\leq N \\ w_i=\rightarrow
		\end{matrix}} \Psi^{(2)}_{\alpha_i}(\lambda^i/\lambda^{i-1}) \prod_{\footnotesize \begin{matrix}
				1\leq i\leq N \\ w_i=\downarrow
		\end{matrix}} \Psi^{(2)}_{\alpha_i}(\lambda^{i-1}/\lambda^{i})\;\; \delta_0(\lambda_2^0),
	\end{equation*} 
	where the normalization constant $\mathcal Z_{\rm LG}^{\boldsymbol{\alpha}, u,v}(N)$ does not depend on the shape of the path (\cite[Proposition 3.12]{barraquand2023stationary}). 
	\label{def:generaltwolayerWhittaker}
\end{definition}

We will focus in particular on the case where $w=(\rightarrow, \dots, \rightarrow)$, where 
\begin{equation}
	\mathbb P^{\boldsymbol{\alpha}, u, v}(\boldsymbol{\lambda}) = \frac{1}{\mathcal Z_{\rm LG}^{\boldsymbol{\alpha}, u,v}(N)} e^{-u(\lambda_1^0-\lambda_2^0)}e^{-v(\lambda_1^N-\lambda_2^N)} \prod_{i=1}^N \Psi^{(2)}_{\alpha_i}(\lambda^i/\lambda^{i-1}) \delta_0(\lambda_2^0).
	\label{eq:deftwolayerWhittaker}
\end{equation} 
Given some integers $0= x_0< x_1< \dots< x_k= N$, one can apply the definition \eqref{eq:defskewWhittaker} to write the marginal density of $\lambda^{x_0}=\mu^0, \dots, \lambda^{x_k}=\mu^k$ as 
\begin{equation}
	\frac{1}{\mathcal Z_{\rm LG}^{\boldsymbol{\alpha}, u,v}(N)} e^{-u(\mu_1^0-\mu_2^0)}e^{-v(\mu_1^k-\mu_2^k)}  \prod_{i=1}^k \Psi^{(2)}_{\alpha_{x_{i-1}+1},\dots,\alpha_{x_i}}(\mu^{i}/\mu^{i-1}) \delta_0(\mu_2^0).
	\label{eq:marginaldistributionWhittaker}
\end{equation}
The main result of this Section is the following. We explain in Section \ref{sec:proofWhittaker} why it implies  Theorem \ref{theo:formulaloggammaintro} stated in the introduction. 
\begin{theorem} Assume that $u,v>0$.  Let $t_1, \dots, t_k$ be complex numbers such that $2u>\Real[t_1]>\dots >\Real[t_k]>-2v$ and for all $1\leq i\leq k$ and $x_{i-1}<r\leq x_i$, we impose that $\alpha_r+\Real[t_i/2]>0$.  Then, 
	\begin{multline}  	\mathbb E^{\boldsymbol{\alpha}, u, v}\left[\prod_{i=1}^k e^{-t_i(\lambda_1^{x_i}-\lambda_1^{x_{i-1}})} \right] =   \frac{1}{\mathcal Z_{\rm LG}^{\boldsymbol{\alpha}, u,v}(N)} \int_{\I\R}\frac{dz_1}{2\I\pi} \dots  \int_{\I\R}\frac{dz_k}{2\I\pi}    
		\prod_{i=1}^{k-1}\frac{\Gamma\left(\pm z_i\pm z_{i+1}+ \frac{t_i-t_{i+1}}{2}\right)}{2\Gamma(\pm 2z_i)\Gamma(t_i-t_{i+1})} \\ 
		\times	\frac{ \Gamma\left(\pm z_1-t_1/2+u\right)\Gamma\left(\pm z_k +t_k/2+v\right) }{2\Gamma(\pm 2z_k)} \prod_{i=1}^k    \prod_{r=x_{i-1}+1}^{x_i} \Gamma\left(\alpha_r+ \frac{t_i}{2}\pm z_i\right).
		\label{eq:LaplacetransformWhittaker}
	\end{multline}
 where each time that $\pm$ appears in the formula, we take the product of the Gamma function with a plus sign with the Gamma function with a minus sign.
	\label{th:formulaWhittaker}
\end{theorem}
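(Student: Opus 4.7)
I would follow the four-step strategy of the proof of Theorem \ref{theo:formulaSchur}, substituting each Schur ingredient by its Whittaker analogue. First, starting from the marginal density \eqref{eq:marginaldistributionWhittaker}, I split each exponent
\begin{equation*}
t_i(\lambda_1^{x_i} - \lambda_1^{x_{i-1}}) = \tfrac{t_i}{2}\sum_{j=1}^2(\mu_j^i - \mu_j^{i-1}) + \tfrac{t_i}{2}\bigl((\mu_1^i - \mu_2^i) - (\mu_1^{i-1} - \mu_2^{i-1})\bigr).
\end{equation*}
The symmetric piece is absorbed into the parameters of the skew Whittaker functions via the translation property of Givental's integral \eqref{eq:defWhittakerGivental}, effectively shifting $\alpha_r \to \alpha_r + t_i/2$ for $x_{i-1} < r \leq x_i$. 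The antisymmetric piece telescopes via Abel summation and only modifies the boundary exponentials, so that $u$ becomes $u - t_1/2$, $v$ becomes $v + t_k/2$, and each intermediate step $0 < i < k$ picks up an extra factor $\exp(-\tfrac{t_i - t_{i+1}}{2}(\mu_1^i - \mu_2^i))$. This is the additive analogue of the multiplicative change of variables $z \mapsto \sqrt{t_i}\,w$ used in the Schur proof.

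Next, I would use the Whittaker analogue of Lemma \ref{lem:skewSchur} (the Proposition cited as \texttt{lem:integralWhittaker} in the introduction, which follows from Whittaker orthogonality together with the skew Cauchy identity) to write each skew Whittaker function $\Psi^{(2)}_{(\alpha_r + t_i/2)_{r}}(\mu^i/\mu^{i-1})$ as a $2$-dimensional contour integral over $(z^i_1, z^i_2) \in (\I\R)^2$, with integrand proportional to $\Delta(z^i)\,\Psi_{z^i}(\mu^{i-1})\Psi_{-z^i}(\mu^i)\prod_{r,j}\Gamma(\alpha_r + t_i/2 + z^i_j)$. I would then integrate the $\mu^i$ out sequentially. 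The boundary integrals over $\mu^0$ and $\mu^k$ are evaluated using the Whittaker Littlewood identity \eqref{eq:LittlewoodWhittaker}, producing the factors $\Gamma(u - t_1/2 \pm z^1_j)$ and $\Gamma(v + t_k/2 \pm z^k_j)$ respectively. For intermediate $0 < i < k$, the integral $\int_{\mathbb R^2} \Psi_{-z^i}(\mu)\Psi_{z^{i+1}}(\mu)\, e^{-\frac{t_i - t_{i+1}}{2}(\mu_1 - \mu_2)}\,d\mu$ must equal a product of four Gamma factors of the form $\Gamma(-z^i_r + z^{i+1}_s + (t_i - t_{i+1})/2)$ divided by $\Gamma(t_i - t_{i+1})$, times a simple correction. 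This is the Whittaker analogue of Lemma \ref{lem:specialcomputation}, and I would prove it by a direct Mellin--Barnes computation using Givental's representation at $n=2$, replacing the RSK-based proof of the Schur case (which exploited the Schur-measure interpretation that Whittaker does not admit).

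The final stage is to collapse each $2$-dimensional integral over $(z^i_1, z^i_2) \in (\I\R)^2$ to a $1$-dimensional integral over $z_i \in \I\R$, matching the measure $\mathfrak m_{\rm LG}(dz_i) = \frac{1}{2\Gamma(2z_i)\Gamma(-2z_i)}$, which is precisely $\Delta(z^i)$ restricted to the diagonal $z^i_2 = -z^i_1$. I expect this to be the main technical obstacle: in the Schur proof the collapse came cleanly from residues at $w^i_2 = 1/w^i_1$, but here integration is on the non-compact imaginary axis and there are no closed contours to deform. Instead, I plan to exploit the Weyl symmetry $(z^i_1, z^i_2) \mapsto (z^i_2, z^i_1)$ of the integrand together with the antisymmetric denominator $\Gamma(z^i_1 - z^i_2)\Gamma(z^i_2 - z^i_1)$ appearing in $\Delta(z^i)$, and then perform a contour shift in $z^i_2$ (to $-z^i_1 + \I\R$) so that after cancellation only the diagonal contribution survives. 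Convergence and the use of Fubini throughout are justified by the Gamma decay $|\Gamma(a + \I y)| \leq C e^{-\pi|y|/2}$ together with the hypotheses $u, v > 0$ and $\Real[\alpha_r + t_i/2] > 0$, which keep every Gamma factor in its holomorphic half-plane.
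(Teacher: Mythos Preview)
Your overall plan---absorb the symmetric piece of $t_i$ into the $\alpha_r$, use the integral representation of skew Whittaker functions, integrate out the $\mu^i$ sequentially---matches the paper's architecture. But the crucial step, integrating out the $\mu^i$, fails as you have stated it, and this failure is precisely what makes the Whittaker proof depart from the Schur one.

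The integrals you propose to evaluate directly, namely
\[
\int_{\R^2}\Psi_{-z^k}(\mu)\,e^{-(v+t_k/2)(\mu_1-\mu_2)}\,d\mu
\quad\text{and}\quad
\int_{\R^2}\Psi_{-z^i}(\mu)\Psi_{z^{i+1}}(\mu)\,e^{-\frac{t_i-t_{i+1}}{2}(\mu_1-\mu_2)}\,d\mu,
\]
are \emph{not} absolutely convergent: the Littlewood identity \eqref{eq:LittlewoodWhittaker} and the Whittaker analogue of Lemma~\ref{lem:specialcomputation} both require the regularizer $e^{-se^{-\mu_2}}$, and without it the $\mu_1+\mu_2$ direction is not integrable. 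So your claimed outputs $\prod_j\Gamma(v+t_k/2\pm z_j^k)$ and $\prod_{r,s}\Gamma(-z^i_r+z^{i+1}_s+\tfrac{t_i-t_{i+1}}{2})/\Gamma(t_i-t_{i+1})$ are not what these integrals produce. The paper handles this by inserting the regularizer, computing, and sending $s\to 0$; the result is \emph{distributional}: the $\mu^k$ integral yields $\delta_0(z^k_1+z^k_2)\,\Gamma(v+t_k/2\pm z^k_1)$ (Lemma~\ref{lem:deltaLitllewood}), and each intermediate $\mu^i$ integral imposes a linear constraint $w_2=-w_1-z_1-z_2$ on the spectral variables (Lemma~\ref{lem:deltaCauchy}). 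When the final delta $z^k_1+z^k_2=0$ is fed back through these constraints, every pair collapses to the antidiagonal $z^i_2=-z^i_1$ and the formula emerges directly. In short, the collapse from $2$-dimensional to $1$-dimensional integrals is produced by these delta constraints, not by any Weyl-symmetry plus contour-shift argument; your proposed ``final stage'' mechanism has no foothold because after a correct evaluation of the $\mu$-integrals there is nothing left to symmetrize or shift.

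A smaller point you have also missed: the integral representation of $\Psi^{(2)}_{\alpha_{x_{i-1}+1},\dots,\alpha_{x_i}}$ from Lemma~\ref{lem:integralWhittaker} needs at least two specializations (i.e.\ $x_i-x_{i-1}\geq 2$) for convergence. The paper first proves the theorem under this restriction and then removes it by a limiting argument in which extra parameters are sent to infinity.
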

The proof is given below. In the simplest case, $k=1$, we obtain that for $2u>t>-2v$, 
\begin{equation}  	\mathbb E^{\boldsymbol{\alpha}, u, v}\left[ e^{-t(\lambda_1^{N}-\lambda_1^{0})} \right] =   \frac{1}{\mathcal Z_{\rm LG}^{\boldsymbol{\alpha}, u,v}(N)} \int_{\I\R}\frac{dz}{2\I\pi} \frac{\Gamma(u-t/2\pm z)\Gamma(v+t/2\pm z)\prod_{i=1}^N\Gamma(\alpha_i+t/2\pm z)}{2\Gamma(2z)\Gamma(-2z)}.
	\label{eq:Laplacetransformspecialcase} 
\end{equation}
As a corollary, the normalization constant  has a simple integral expression.
\begin{corollary}
	When $u,v> 0$, 
	\begin{equation}
	\mathcal Z_{\rm LG}^{\boldsymbol{\alpha}, u,v}(N) = \int_{\I\R}\frac{dz}{2\I\pi}\frac{\Gamma(u\pm z)\Gamma(v\pm z)\prod_{i=1}^N \Gamma(\alpha_i\pm z)}{2\Gamma(2z)\Gamma(-2z)}.
	\end{equation}
	\label{cor:normalizationWhittaker}
\end{corollary}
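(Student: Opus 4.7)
The plan is to derive this corollary as a direct specialization of Theorem \ref{th:formulaWhittaker}, in complete analogy with the way Corollary \ref{cor:normalization} was deduced from Theorem \ref{theo:formulaSchur}. Concretely, I would take $k=1$ and $t_1=0$ in the Laplace transform formula \eqref{eq:LaplacetransformWhittaker} (or equivalently in its $k=1$ specialization \eqref{eq:Laplacetransformspecialcase}).

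First I would verify that the hypotheses of Theorem \ref{th:formulaWhittaker} are satisfied at $t_1=0$: the condition $2u>\Real[t_1]>-2v$ becomes $2u>0>-2v$, which is precisely the assumption $u,v>0$ of the corollary; the condition $\alpha_r+\Real[t_1/2]>0$ becomes $\alpha_r>0$, which holds by Definition \ref{def:generaltwolayerWhittaker}. Once the theorem applies, the left-hand side of \eqref{eq:Laplacetransformspecialcase} is simply $\mathbb{E}^{\boldsymbol{\alpha},u,v}[1]=1$, and solving for $\mathcal Z_{\rm LG}^{\boldsymbol{\alpha}, u,v}(N)$ yields
$$\mathcal Z_{\rm LG}^{\boldsymbol{\alpha}, u,v}(N) = \int_{\I\R}\frac{dz}{2\I\pi}\frac{\Gamma(u\pm z)\Gamma(v\pm z)\prod_{i=1}^N \Gamma(\alpha_i\pm z)}{2\Gamma(2z)\Gamma(-2z)},$$
which is the claimed identity.

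The only thing worth checking is that the integral on the right-hand side is indeed absolutely convergent, so that the specialization $t_1=0$ is legitimate. For $z=\I y$ with $y\in \R$, the Stirling bound $|\Gamma(a+\I y)|\sim C_a |y|^{a-1/2} e^{-\pi|y|/2}$ shows that each factor $\Gamma(u\pm z)$, $\Gamma(v\pm z)$, $\Gamma(\alpha_i\pm z)$ contributes a decay $e^{-\pi|y|}$ (from the pair), while $1/\Gamma(\pm 2z)$ contributes a growth of order $e^{2\pi|y|}$. The net decay of the integrand is thus $e^{-(N+2)\pi|y|}\cdot e^{2\pi|y|} = e^{-N\pi|y|}$ times a polynomial, which is integrable for $N\geq 1$. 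There is no genuine obstacle here: the whole argument is a one-line consequence of Theorem \ref{th:formulaWhittaker} together with a routine convergence check, exactly as in the proof of Corollary \ref{cor:normalization}.
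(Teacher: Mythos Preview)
Your proposal is correct and follows essentially the same approach as the paper: the paper's proof is the single line ``It suffices to let $t\to 0^+$ in \eqref{eq:Laplacetransformspecialcase},'' which is exactly your specialization $k=1$, $t_1=0$ of Theorem \ref{th:formulaWhittaker}. Your additional verification of the hypotheses and the convergence check are fine elaborations, but the core argument is identical.
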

\begin{proof}
	It suffices to let $t\to 0^+$ in \eqref{eq:Laplacetransformspecialcase}. 
\end{proof}

\subsection{Proof of Theorem \ref{th:formulaWhittaker}}
As in the proof of Proposition \ref{prop:formulaSchur} and Theorem \ref{theo:formulaSchur}, we will rely on an integral representation of skew Whittaker functions (Lemma  \ref{lem:integralWhittaker}), which is itself based on a variant of the Cauchy identity for Whittaker functions (Lemma \ref{lem:skewCauchyWhittaker}). 

The computation of the Laplace transform requires further integral identities. We first prove a Whittaker  analogue of Lemma \ref{lem:specialcomputation} (Lemma \ref{lem:specialcomputationWhittaker} below). Without surprise, it relies on the geometric RSK correspondence instead of the classical RSK correspondence. However, in the Whittaker case, this identity is not directly applicable to the computation of the Laplace transform of the two-layer  Whittaker process. We will need two extra identities, stated below in Lemma \ref{lem:deltaLitllewood} and Lemma \ref{lem:deltaCauchy}, which do not have analogues in Section \ref{sec:Schur}. 

Once all these preliminary  identities are established, we prove Theorem \ref{th:formulaWhittaker}. 

\begin{lemma} Let $k\geq 1$. 
There holds a variant of the Cauchy identity where, for $\boldsymbol{\alpha}=(\alpha_1, \dots, \alpha_k)\in \C^k$ and $\boldsymbol\beta=(\beta_1, \dots, \beta_n)\in \C^n$  satisfying $\Real[\alpha_i+\beta_j]>0$, 
\begin{equation}
	\int_{\mathbb R^n} \Psi^{(n)}_{\boldsymbol\alpha}(x/y)  \Psi_{\boldsymbol\beta}(x)   dx = \Psi_{\boldsymbol\beta}(y) \prod_{i=1}^k\prod_{j=1}^n \Gamma(\alpha_i+\beta_j).
	\label{eq:skewCauchyWhittaker}
\end{equation}
\label{lem:skewCauchyWhittaker}
\end{lemma}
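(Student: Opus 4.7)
The plan is to induct on $k$, reducing everything to the base case $k=1$. For the inductive step, the branching rule \eqref{eq:defskewWhittaker} yields
\begin{equation*}
\Psi^{(n)}_{\alpha_1,\dots,\alpha_k}(x/y) = \int_{\R^n} \Psi^{(n)}_{\alpha_k}(x/z)\,\Psi^{(n)}_{\alpha_1,\dots,\alpha_{k-1}}(z/y)\,dz,
\end{equation*}
so after Fubini (convergence being guaranteed by $\Re[\alpha_i+\beta_j]>0$ together with the doubly exponential decay of $\Psi_{\boldsymbol\beta}$ away from the Weyl chamber),
\begin{equation*}
\int \Psi^{(n)}_{\boldsymbol\alpha}(x/y)\Psi_{\boldsymbol\beta}(x)\,dx
= \int \Psi^{(n)}_{\alpha_1,\dots,\alpha_{k-1}}(z/y)\left[\int \Psi^{(n)}_{\alpha_k}(x/z)\Psi_{\boldsymbol\beta}(x)\,dx\right]dz.
\end{equation*}
Applying the $k=1$ identity to the inner integral gives a factor $\Psi_{\boldsymbol\beta}(z)\prod_j\Gamma(\alpha_k+\beta_j)$, and then the inductive hypothesis on the outer integral finishes the step.

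For the base case $k=1$, the cleanest route is to deduce it from the full Cauchy identity \eqref{eq:CauchyWhittaker} applied twice. Take parameters $\boldsymbol\alpha'=\boldsymbol\beta$ (of length $n$) and $\boldsymbol\beta'=(\alpha,\gamma_1,\dots,\gamma_n)$ (of length $n+1$). By the branching definition \eqref{eq:defWhittakerfunctiongeneral},
\begin{equation*}
\Psi^{*}_{\alpha,\gamma_1,\dots,\gamma_n}(x) = \int \Psi^{(n)}_{\alpha}(x/y)\,\Psi^{*}_{\gamma_1,\dots,\gamma_n}(y)\,dy,
\end{equation*}
so inserting this into \eqref{eq:CauchyWhittaker} and switching the order of integration yields
\begin{equation*}
\int \Psi^{*}_{\gamma_1,\dots,\gamma_n}(y)\left[\int \Psi^{(n)}_{\alpha}(x/y)\Psi_{\boldsymbol\beta}(x)\,dx\right]dy = \prod_{i=1}^n \Gamma(\alpha+\beta_i)\prod_{i=1}^n\prod_{j=1}^n\Gamma(\beta_i+\gamma_j).
\end{equation*}
Factoring out $\prod_i\Gamma(\alpha+\beta_i)$ and recognizing the remaining Gamma product, via another application of \eqref{eq:CauchyWhittaker} with equal numbers of parameters, as $\int \Psi_{\boldsymbol\beta}(y)\Psi^{*}_{\gamma_1,\dots,\gamma_n}(y)\,dy$, one obtains
\begin{equation*}
\int \Psi^{*}_{\gamma_1,\dots,\gamma_n}(y)\left\{\int \Psi^{(n)}_{\alpha}(x/y)\Psi_{\boldsymbol\beta}(x)\,dx - \Psi_{\boldsymbol\beta}(y)\prod_{j=1}^n\Gamma(\alpha+\beta_j)\right\}dy = 0.
\end{equation*}
Since this holds for $\boldsymbol\gamma$ ranging over an open set of admissible parameters, completeness of the family $\{\Psi^{*}_{\boldsymbol\gamma}\}_{\boldsymbol\gamma\in \I\R^n}$ (the Whittaker--Plancherel theorem, which underlies the orthogonality statement of Section~\ref{sec:Whittaker} and is proved in \cite{o2014geometric}) forces the bracketed function to vanish.

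The hard part is justifying the last step rigorously, i.e.\ that orthogonality forces the vanishing: one needs the Whittaker inverse transform for the specific contour of $\boldsymbol\gamma$. A cleaner alternative, if one prefers to avoid invoking Plancherel, is to establish the $k=1$ identity directly by inserting Givental's representation \eqref{eq:defWhittakerGivental} for $\Psi_{\boldsymbol\beta}$ and performing the resulting integrals in a carefully chosen order so that each variable appears in exactly one double-exponential factor and contributes a single Euler integral $\int e^{-a\xi-e^{-\xi}}d\xi=\Gamma(a)$; one then reassembles the kernels $\Psi^{(i,i-1)}_{\beta_i}$ to recover $\Psi_{\boldsymbol\beta}(y)$ on the right-hand side. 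Either way, the identity is essentially the Baxter $Q$-operator eigenvalue relation for the quantum Toda lattice \cite{gerasimov2008baxter}, whose kernel is precisely $\Psi^{(n)}_\alpha$.
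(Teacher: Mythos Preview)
Your argument is correct and close in spirit to the paper's. The paper itself remarks that iterating the $k=1$ case from \cite{gerasimov2008baxter} would work, so your inductive reduction is explicitly acknowledged as an alternative route. For the base case, the paper proceeds essentially as you do but handles general $k$ in one shot and phrases the final step more constructively: rather than testing against $\Psi^*_{\boldsymbol\gamma}$ for varying $\boldsymbol\gamma$ and invoking completeness, it takes $\boldsymbol\gamma = z \in \I\R^n$ from the outset, applies the scalar product $\langle\,\cdot\,,\Psi_z(y)\rangle_n$ to both sides, and recognizes $\big\langle \prod_{i,j}\Gamma(z_i+\beta_j),\, \Psi_z(y)\big\rangle_n = \Psi^*_{\boldsymbol\beta}(y)$ directly from the Cauchy identity together with orthonormality. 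This is the inverse Whittaker transform computed explicitly, which sidesteps the step you correctly flag as delicate (arguing that vanishing of the transform forces vanishing of the bracketed function in the relevant function class). Your second alternative, the direct Euler-integral computation via Givental's representation, is also valid and is essentially the original Baxter $Q$-operator argument.
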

\begin{proof} The statement could be proven by iterating the result of \cite[Corollary 2.2]{gerasimov2008baxter}, which corresponds to the special case $k=1$. We give a direct and self-contained proof:   
	By \eqref{eq:CauchyWhittaker}, we have that for $z=(z_1, \dots, z_n)\in \I\R^n$, and $\boldsymbol{\beta}\in\C^n$ with $\Real[\beta_j]>0$, 
	\begin{equation*}
	\int_{\mathbb R^n} \Psi^*_{\boldsymbol{\alpha}, z_1, \dots, z_n}(x)\Psi_{\boldsymbol{\beta}}(x)dx  = \prod_{i=1}^k\prod_{j=1}^n \Gamma(\alpha_i+\beta_j)\prod_{i,j=1}^n \Gamma(z_i+\beta_j).
	\end{equation*}
 Plugging definition \eqref{eq:defWhittakerfunctiongeneral}, we obtain that
	\begin{equation*}
	\int_{\mathbb R^n} \int_{\R^n}  \;\Psi_{\boldsymbol\alpha}^{(n)}(x/y) \Psi^*_{\mathbf z}(y)\Psi_{\boldsymbol{\beta}}(x)dxdy  = \prod_{i=1}^k\prod_{j=1}^n \Gamma(\alpha_i+\beta_j)\prod_{i=1}^n\prod_{j=1}^n \Gamma(z_i+\beta_j).
\end{equation*}
Now, we take the scalar product of both sides with $\Psi_z(y)$. Using the orthonormality, this yields 
\begin{equation*}
e^{-e^{-y_n}}	\int_{\mathbb R^n}   \Psi_{\boldsymbol\alpha}^{(n)}(x/y) \Psi_{\boldsymbol{\beta}}(x)dx  = \prod_{i=1}^k\prod_{j=1}^n \Gamma(\alpha_i+\beta_j) \left\langle \prod_{i,j=1}^n \Gamma(z_i+\beta_j), \Psi_z(y)\right\rangle_n.
\end{equation*} 
But from the Cauchy identity \eqref{eq:CauchyWhittaker} when $m=n$ and using the orthogonality, we know that the scalar product on the right is simply $\Psi^*_{\boldsymbol{\beta}}(y)$, which yields the claimed formula \eqref{eq:skewCauchyWhittaker}. Finally, by analytic continuation, the formula holds as long as $\Real[\alpha_i+\beta_j]>0$. 
\end{proof}

We arrive at the following integral representation for skew Whittaker functions. 
\begin{lemma} Let $k>n-1$. For $\alpha=(\alpha_1, \dots, \alpha_k)\in \mathbb C^k$ and $x,y\in \R^n$, 
	\begin{equation}
\Psi_{\boldsymbol{\alpha}}^{n}(y/x) =  \int_{\I\mathbb R+\eta} \frac{dz_1}{2\I\pi} \dots  \int_{\I\mathbb R+\eta} \frac{dz_n}{2\I\pi} \Delta(z)  \Psi_{\mathbf z}(x) \prod_{i=1}^k\prod_{j=1}^n \Gamma(\alpha_i+z_j) \overline{\Psi_{\mathbf z}(y)},
	\end{equation}
where the real part of the contours is such that $\Real[\alpha_i+\eta]>0$ for all $i$. 
\label{lem:integralWhittaker}
\end{lemma}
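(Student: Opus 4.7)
The plan is to mirror the proof of Lemma~\ref{lem:skewSchur}, with the Whittaker Plancherel inversion (equivalently, the orthonormality of $\{\Psi_{\mathbf z}(x)\}$ recalled at the end of Section~\ref{sec:Whittaker}) playing the role of Schur orthogonality and Lemma~\ref{lem:skewCauchyWhittaker} providing the Cauchy input. Throughout, the symbol $\overline{\Psi_{\mathbf z}(y)}$ should be read as the holomorphic continuation $\mathbf z \mapsto \Psi_{-\mathbf z}(y)$, which agrees with the pointwise complex conjugate when $\mathbf z \in \I\R^n$.

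I would first apply Lemma~\ref{lem:skewCauchyWhittaker} (after exchanging the roles of $x$ and $y$) at $\boldsymbol\beta = -\mathbf z$, with $\mathbf z \in \I\R^n$ and temporarily assuming $\Real[\alpha_i]>0$:
\begin{equation*}
\int_{\R^n} \Psi^{(n)}_{\boldsymbol\alpha}(y'/x)\, \Psi_{-\mathbf z}(y')\, dy' = \Psi_{-\mathbf z}(x) \prod_{i=1}^{k}\prod_{j=1}^{n}\Gamma(\alpha_i - z_j).
\end{equation*}
The left-hand side is precisely the Whittaker transform of $y' \mapsto \Psi^{(n)}_{\boldsymbol\alpha}(y'/x)$, and inverting it via the Plancherel formula yields
\begin{equation*}
\Psi^{(n)}_{\boldsymbol\alpha}(y/x) = \int_{\I\R^n} \Delta(z)\, \Psi_{\mathbf z}(y)\, \Psi_{-\mathbf z}(x) \prod_{i,j}\Gamma(\alpha_i - z_j) \prod_{j=1}^{n}\frac{dz_j}{2\I\pi}.
\end{equation*}
The change of variables $\mathbf z \mapsto -\mathbf z$ leaves the contour $\I\R^n$ invariant and, thanks to the relabeling $(i,j)\mapsto(j,i)$, leaves $\Delta(z) = \frac{1}{n!}\prod_{i\neq j}\Gamma(z_i-z_j)^{-1}$ invariant as well; this produces exactly the formula of the lemma in the case $\eta = 0$.

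The main remaining task is to shift the contour from $\I\R^n$ to $\I\R^n + \eta$. The integrand is meromorphic in each $z_j$: the poles of $\Gamma(\alpha_i + z_j)$ lie at $z_j = -\alpha_i - m$, $m \in \Z_{\geq 0}$, all strictly to the left of $\I\R^n + \eta$ under the hypothesis $\Real[\alpha_i + \eta] > 0$, while $\Delta(z)$ depends only on differences $z_i - z_j$ and is therefore unaffected by the \emph{uniform} translation $\mathbf z \mapsto \mathbf z + \eta \mathbf 1$. Hence no residues are picked up. Absolute convergence of the integral along the shifted contour follows from the exponential decay $|\Gamma(a + \I t)| \leq C e^{-\pi|t|/2}$ combined with the vertical decay of Whittaker functions, and the condition $k > n-1$ is exactly what makes the $nk$ Gamma factors dominate the growth of $\Psi_{\mathbf z}(x)\overline{\Psi_{\mathbf z}(y)}$ so that Fubini and Cauchy's theorem apply rigorously. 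Finally, the temporary hypothesis $\Real[\alpha_i] > 0$ used for the Plancherel step can be removed by analytic continuation in $\boldsymbol\alpha$, both sides being analytic under the stated convergence conditions. The main delicate point of the proof is therefore not the algebraic identities but the justification of the Plancherel inversion on a function as specific as $y' \mapsto \Psi^{(n)}_{\boldsymbol\alpha}(y'/x)$, which requires the Whittaker decay estimates mentioned after \eqref{eq:defWhittakerGivental}.
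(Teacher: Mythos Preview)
Your proposal is correct and follows essentially the same route as the paper: apply the skew Cauchy identity of Lemma~\ref{lem:skewCauchyWhittaker} to compute the Whittaker transform of $y\mapsto\Psi^{(n)}_{\boldsymbol\alpha}(y/x)$, then invert via the orthonormality/Plancherel relation. The paper applies the Cauchy identity at $\boldsymbol\beta=\mathbf z$ directly (so no change of variables $\mathbf z\mapsto-\mathbf z$ is needed), and then simply states that ``the desired formula follows directly,'' omitting the contour shift to $\I\R+\eta$ and the analytic continuation in $\boldsymbol\alpha$ that you spell out; your added detail on those points is sound and, if anything, more careful than the paper's own argument.
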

\begin{proof}
	From Lemma \ref{lem:skewCauchyWhittaker}, we have 
	\begin{equation}
		\int_{\mathbb R^n} \Psi^{(n)}_{\boldsymbol\alpha}(y/x)  \Psi_{\mathbf z}(y)   dy = \Psi_{\mathbf z}(x) \prod_{i=1}^k\prod_{j=1}^n \Gamma(\alpha_i+z_j).
	\end{equation}
Orthonormality of Whittaker functions yields 
\begin{equation*}
\Psi^{(n)}_{\boldsymbol\alpha}(y/x) =  \left\langle \Psi_{\mathbf z}(x) \prod_{i=1}^k\prod_{j=1}^n \Gamma(\alpha_i+z_j), \Psi_{\mathbf z}(y)\right\rangle_n
\end{equation*}
from which the desired formula follows directly. Due to the decay of Gamma functions on a vertical axis, the scalar product is well defined when $k>n-1$. 
\end{proof}
We also have the following Whittaker analogue of Lemma \ref{lem:specialcomputation}. 
\begin{lemma} For $u\in \mathbb R, s>0$ and  $(\alpha_1,\alpha_2)$ and $(\beta_1,\beta_2)$ such that  $\Real[\alpha_i+\beta_j+u]>0$ and $\Real[\alpha_1+\alpha_2+\beta_1+\beta_2]>0$, we have 
\begin{multline}
\int_{\mathbb R^2}  \Psi_{\alpha_1, \alpha_2}(x_1,x_2)\Psi_{\beta_1, \beta_2}(x_1,x_2)e^{-s e^{-x_2}} e^{-u(x_1-x_2)} dx_1 dx_2 = \\ s^{-(\alpha_1+\beta_1+\alpha_2+\beta_2)}\frac{\Gamma(\alpha_1+\alpha_2+\beta_1+\beta_2)}{\Gamma(\alpha_1+\alpha_2+\beta_1+\beta_2+2u)} \prod_{1\leq i,j\leq 2} \Gamma(\alpha_i+\beta_j+u).
\label{eq:specWhittaker1}
\end{multline}
\label{lem:specialcomputationWhittaker}
\end{lemma}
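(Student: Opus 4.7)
The plan is to follow the proof of Lemma \ref{lem:specialcomputation}, replacing the Schur measure and the classical RSK correspondence by the Whittaker measure on $\mathbb{R}^2$ and the geometric RSK correspondence of \cite{corwin2014tropical, o2014geometric}.

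First, I would reduce to $s=1$ using the scaling covariance of Whittaker functions. Shifting all integration variables in Givental's representation \eqref{eq:defWhittakerGivental} yields
\[ \Psi_{\boldsymbol{\alpha}}(x+c\mathbf{1}) = e^{-c\sum_i\alpha_i}\Psi_{\boldsymbol{\alpha}}(x). \]
Substituting $x_i = y_i+\log s$ on the left-hand side of \eqref{eq:specWhittaker1} then gives $se^{-x_2}=e^{-y_2}$ and $x_1-x_2=y_1-y_2$, along with an overall prefactor $s^{-(\alpha_1+\alpha_2+\beta_1+\beta_2)}$. It therefore suffices to prove the identity at $s=1$.

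At $s=1$, the Cauchy identity \eqref{eq:CauchyWhittaker} for $n=m=2$ makes $\frac{1}{\prod_{i,j}\Gamma(\alpha_i+\beta_j)}\Psi_{\alpha_1,\alpha_2}(y)\Psi^*_{\beta_1,\beta_2}(y)$ a bona fide probability density on $\mathbb{R}^2$, so that the left-hand side equals $\prod_{i,j}\Gamma(\alpha_i+\beta_j)\cdot\mathbb{E}[e^{-u(y_1-y_2)}]$. The geometric RSK correspondence identifies this law with the pushforward of an i.i.d.\ matrix of inverse-Gamma weights: taking $W_{ij}$ independent with $1/W_{ij}\sim\mathrm{Gamma}(\alpha_i+\beta_j)$, one has
\[ e^{y_1} = W_{11}W_{22}(W_{12}+W_{21}), \qquad e^{y_1+y_2}=\prod_{i,j}W_{ij}, \]
the first being the partition function of the $2\times 2$ log-gamma polymer and the second the total product over the grid. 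Hence $e^{y_2}=W_{12}W_{21}/(W_{12}+W_{21})$, and
\[ e^{-u(y_1-y_2)} = \frac{W_{12}^u W_{21}^u}{W_{11}^u W_{22}^u (W_{12}+W_{21})^{2u}}. \]
The expectation then factorizes by independence. The two diagonal contributions produce $\mathbb{E}[W_{ii}^{-u}] = \Gamma(\alpha_i+\beta_i+u)/\Gamma(\alpha_i+\beta_i)$ directly from the Mellin transform of the inverse-Gamma density. For the off-diagonal piece I set $V_1=1/W_{12}\sim\mathrm{Gamma}(\alpha_1+\beta_2)$ and $V_2=1/W_{21}\sim\mathrm{Gamma}(\alpha_2+\beta_1)$; the classical beta--gamma decomposition gives $B:=V_1/(V_1+V_2)\sim\mathrm{Beta}(\alpha_1+\beta_2,\alpha_2+\beta_1)$ independent of $V_1+V_2$, and the off-diagonal factor becomes $B^u(1-B)^u$. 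A direct beta integral then yields
\[ \mathbb{E}[B^u(1-B)^u] = \frac{\Gamma(\alpha_1+\beta_2+u)\Gamma(\alpha_2+\beta_1+u)\Gamma(\alpha_1+\alpha_2+\beta_1+\beta_2)}{\Gamma(\alpha_1+\beta_2)\Gamma(\alpha_2+\beta_1)\Gamma(\alpha_1+\alpha_2+\beta_1+\beta_2+2u)}, \]
and assembling all factors produces exactly the right-hand side of \eqref{eq:specWhittaker1}.

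The main obstacle is invoking the correct form of the geometric RSK output for $n=2$ and its distributional identification with the Whittaker measure; this is standard but must be stated carefully. It is also essentially only for $n=2$ that the formula $e^{y_2}=W_{12}W_{21}/(W_{12}+W_{21})$ reduces the expectation to a single beta integral, which is presumably why the lemma does not obviously extend beyond length two. Once the coupling is in place the computation is a routine beta--gamma exercise; an alternative direct attack via Givental's integral representation and iterated Gamma integrals should also work but seems considerably more tedious.
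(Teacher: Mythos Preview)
Your proof is correct and follows essentially the same route as the paper: reduce to $s=1$ by scaling, then invoke the geometric RSK correspondence to identify the Whittaker measure with the pushforward of an inverse-Gamma array and compute the resulting moment.

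The only difference is in the bookkeeping. The paper first absorbs the tilt into the parameters via $\Psi_{\alpha_1,\alpha_2}(x)\,e^{-u(x_1-x_2)}=\Psi_{\alpha_1+u,\alpha_2+u}(x)\,e^{2ux_2}$, so that under the $(\alpha+u,\beta)$-Whittaker measure one only needs $\mathbb E[e^{2ux_2}]$; since $e^{-x_2}=1/W_{12}+1/W_{21}$ is directly $\mathrm{Gamma}(\alpha_1+\alpha_2+\beta_1+\beta_2+2u)$, this is a single Mellin moment and the hypothesis $\Re[\alpha_i+\beta_j+u]>0$ is used on the nose. Your version instead stays with the $(\alpha,\beta)$-measure and computes $\mathbb E[e^{-u(y_1-y_2)}]$ via the beta--gamma decomposition, which is equally valid but tacitly requires $\Re[\alpha_i+\beta_j]>0$ before extending by analytic continuation. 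Either way the argument is the same in substance.
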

\begin{proof}
For $s=1$, the proof is similar with that of Lemma \ref{lem:specialcomputation}. Using the geometric RSK correspondence \cite{corwin2014tropical}, we find that 
\begin{multline}
	 \prod_{1\leq i,j\leq 2} \frac{1}{\Gamma(\alpha_i+\beta_j+u)} \int_{\mathbb R^2}  \Psi_{\alpha_1, \alpha_2}(x_1,x_2)\Psi_{\beta_1, \beta_2}(x_1,x_2)e^{-e^{-x_2}} e^{-u(x_1-x_2)} dx_1 dx_2\\ 
	 =    \prod_{1\leq i,j\leq 2} \frac{1}{\Gamma(\alpha_i+\beta_j+u)} \int_{\mathbb R^2}  \Psi_{\alpha_1+u, \alpha_2+u}(x_1,x_2)\Psi^*_{\beta_1, \beta_2}(x_1,x_2) e^{2u x_2} dx_1 dx_2 \\
	  = \mathbb E\left[e^{2u x_2}\right]
\end{multline}
where in the right-hand side, $e^{x_2}$ is an inverse Gamma random variable with parameter $\alpha_1+\beta_1+\alpha_2+\beta_2+2u$. Hence, \eqref{eq:specWhittaker1} follows from the formula for the characteristic function of the (inverse) Gamma distribution. Finally, the case where $s>0$ is arbitrary follows from the case $s=1$ and the change of variables $x_i= \tilde x_i-\log(s)$. 
\end{proof}

Up to now, the above lemmas are Whittaker analogues of lemmas in Section \ref{sec:Schur} and will play a similar role. Now, we state two further lemmas which are specific to the Whittaker case.

\begin{lemma}
	Let $u>0$ and $\eta$ such that $0\leq \eta<u$. For $z\in (\eta+\I\R)^2$, we have 
	\begin{equation*}
		\int_{\R^2}d\mu  \Psi_{z}(\mu) e^{-u(\mu_1-\mu_2)}  =  \delta_0(z_1+z_2) \Gamma(u+z_1)\Gamma(u+z_2).
	\end{equation*} 
	This should be understood in the sense that for functions $f$ that are holomorphic between the lines $-\eta-\eps+\I\R$ and $\eta+\I\R$, for some $\eps>0$, with exponential decay at infinity (in the sense that $\vert f(z)\vert \leq C e^{-c \vert z\vert}$ when $-\eta-\eps \leq \Real[z]\leq \eta$), we have  
	\begin{equation}
		\int_{\R^2}dx \int_{(\I\R+\eta)^2}\frac{dz}{2\I\pi} f(z) \Psi_{z}(x) e^{-u(x_1-x_2)}  =  \int_{\I\R+\eta} \frac{dz}{2\I\pi} \Gamma(u\pm z)f(z,-z).
		\label{eq:deltaLitllewood}
	\end{equation}
\label{lem:deltaLitllewood}
\end{lemma}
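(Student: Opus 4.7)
The left-hand side of \eqref{eq:deltaLitllewood} has to be regularized before Fubini is applicable: the naive integral of $\Psi_z(\mu)e^{-u(\mu_1-\mu_2)}$ over $\mu\in\R^2$ diverges precisely on the locus $z_1+z_2=0$, which is where the delta distribution originates. The plan is to insert the regulator $e^{-s e^{-x_2}}$, evaluate the $x$-integral via the $n=2$ Littlewood identity \eqref{eq:LittlewoodWhittaker}, and extract the contribution at $z_1+z_2=0$ by a contour shift in the $s\to 0^+$ limit.

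For $s>0$, set
\begin{equation*}
I(s):=\int_{\R^2}dx\int_{(\I\R+\eta)^2}\frac{dz_1\,dz_2}{(2\I\pi)^2}\,f(z_1,z_2)\,\Psi_z(x)\,e^{-u(x_1-x_2)}\,e^{-s e^{-x_2}}.
\end{equation*}
The presence of $e^{-s e^{-x_2}}$ restores a Gaussian-like decay of the integrand at $x_2\to-\infty$, so that standard estimates on $\Psi_z(x)$ (doubly exponential decay outside the Weyl chamber and exponential decay of rate $\Real[\alpha_i]$ inside it) together with the assumed exponential decay of $f$ on the vertical strip ensure absolute convergence of the triple integral. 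Fubini and the Littlewood identity \eqref{eq:LittlewoodWhittaker} applied with $\boldsymbol\alpha=(z_1,z_2)$ then give
\begin{equation*}
I(s)=\int_{(\I\R+\eta)^2}\frac{dz_1\,dz_2}{(2\I\pi)^2}\,f(z_1,z_2)\,s^{-(z_1+z_2)}\,\Gamma(u+z_1)\,\Gamma(u+z_2)\,\Gamma(z_1+z_2).
\end{equation*}

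Next, shrink $\eps$ if necessary so that $\eps<u-\eta$, and deform the $z_2$-contour from $\eta+\I\R$ to $c+\I\R$ with $c\in(-\eta-\eps,-\eta)$. In this deformation we cross only the simple pole of $\Gamma(z_1+z_2)$ at $z_2=-z_1$ (the poles of $\Gamma(u+z_2)$ lie at $-u,-u-1,\dots$, hence strictly to the left of $c$), and $f$ is holomorphic in the strip by assumption. The residue at $z_2=-z_1$ contributes
\begin{equation*}
\int_{\eta+\I\R}\frac{dz_1}{2\I\pi}\,\Gamma(u+z_1)\,\Gamma(u-z_1)\,f(z_1,-z_1),
\end{equation*}
while the remaining double integral on $(\eta+\I\R)\times(c+\I\R)$ carries the factor $|s^{-(z_1+z_2)}|=s^{-(\eta+c)}$ with exponent $-(\eta+c)>0$. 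Combined with the decay of $f$ and Stirling's bound $|\Gamma(a+\I t)|\le C_a e^{-\pi|t|/2}$ on the integrand, this gives a bound of the form $s^{-(\eta+c)}\times(\text{absolutely convergent integral})$, which vanishes as $s\to 0^+$. Hence
\begin{equation*}
\lim_{s\to 0^+}I(s)=\int_{\eta+\I\R}\frac{dz_1}{2\I\pi}\,\Gamma(u\pm z_1)\,f(z_1,-z_1),
\end{equation*}
matching the right-hand side of \eqref{eq:deltaLitllewood}.

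It remains to identify $\lim_{s\to 0^+}I(s)$ with the left-hand side of \eqref{eq:deltaLitllewood}. Performing the $z$-integration first, the exponential decay of $f$ produces an honest function of $x$ (an inverse Whittaker transform of $f$) which is integrable against $e^{-u(x_1-x_2)}dx$ thanks to the decay properties of $\Psi_z(x)$. Since $e^{-s e^{-x_2}}\uparrow 1$ monotonically as $s\to 0^+$, dominated convergence identifies $\lim_{s\to 0^+}I(s)$ with the left-hand side of \eqref{eq:deltaLitllewood}, completing the proof.

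The main technical obstacle is the uniform-in-$s$ absolute integrability required to push $s\to 0^+$ inside the deformed contour integral. This rests on combining the $e^{-\pi|t|/2}$ decay of Gamma functions along vertical lines with the hypothesized exponential decay of $f$; the same estimates, applied at fixed $s>0$, justify Fubini in the definition of $I(s)$.
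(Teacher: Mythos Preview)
Your argument is essentially the same as the paper's: insert the regulator $e^{-s e^{-x_2}}$, apply the $n=2$ Littlewood identity \eqref{eq:LittlewoodWhittaker} after a Fubini swap, shift the $z_2$-contour to the left to extract the residue at $z_2=-z_1$, and let $s\to 0^+$ so that the remaining integral (now with $\Real[z_1+z_2]<0$) vanishes. The only minor point the paper makes explicit that you do not is that the argument is run first for $\eta>0$ strictly (so the pole at $z_2=-z_1$ sits strictly inside the strip being crossed rather than on the initial contour), and the case $\eta=0$ is then recovered by a small contour deformation at the end.
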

\begin{proof}
	Whittaker functions in two variables can be written as (see e.g. \cite[Section 5]{o2012directed})
	\begin{equation}
		\Psi_{z_1,z_2}(x_1,x_2) = 2 e^{-\frac{1}{2} (z_1+z_2)(x_1+x_2)}K_{z_1-z_2}(2e^{-\frac{x_1-x_2}{2}})
		\label{eq:WhittakerBessel}
	\end{equation}
	where $K$ is a Bessel function. Hence the integral identity  \eqref{eq:deltaLitllewood} could be verified explicitly using properties of Bessel functions. We will, however,  employ another proof method which has the advantage of being more general. We will reuse the argument in Lemma \ref{lem:deltaCauchy}.

	 First assume that $\eta>0$. Using the Littlewood identity \eqref{eq:LittlewoodWhittaker}, we may write, for $s>0$, 
	 \begin{multline}
	 \int_{(\I\R+\eta)^2} \frac{dz}{2\I\pi} f(z) \int_{\R^2} \Psi_{z}(x) e^{-se^{-x_2}}e^{-u(x_1-x_2)} = \\  \int_{(\I\R+\eta)^2} dz f(z) s^{-z_1-z_2} \Gamma(z_1+z_2) \Gamma(z_1+u)\Gamma(z_2+u). 
	 \label{eq:WhittakerdeltaLittlewoods}
	 \end{multline}
	 In the left-hand-side of \eqref{eq:WhittakerdeltaLittlewoods}, we claim that we may exchange the integrations over $z$ and $x$. Indeed, the definition of Whittaker functions \ref{eq:defWhittakerGivental} implies that $\vert \Psi_{z}(x)\vert \leq \Psi_{(\eta,\eta)}(x)$, so that we can integrate $\vert f(z) \Psi_{z}(x) e^{-se^{-x_2}}e^{-u(x_1-x_2)}\vert$ over $x\in \R^2$ using  \eqref{eq:LittlewoodWhittaker}, and the resulting quantity is integrable over $z$  using the decay assumption on the function $f$. Hence, by Fubini theorem, we can exchange integrations.  Now, in the right-hand side of \eqref{eq:WhittakerdeltaLittlewoods}, we may shift to the left the integration contour for $z_2$, so as to pick a pole when $z_2=-z_1$. We obtain that 
	 \begin{multline}
	 \int_{\R^2}dx	\int_{(\I\R+\eta)^2} dz f(z)  \Psi_{z}(x) e^{-se^{-x_2}}e^{-u(x_1-x_2)} =  \\ \int_{\I\R+\eta} \frac{dz_1}{2\I\pi} \int_{\I\R-\eta-\eps} \frac{dz_2}{2\I\pi}   f(z) s^{-z_1-z_2} \Gamma(z_1+z_2) \Gamma(z_1+u)\Gamma(z_2+u)
	  + \int_{\I\R+\eta} \frac{dz_1}{2\I\pi} f(z_1, -z_1)\Gamma(u\pm z_i).
	 \end{multline}
	 At this point, in the right-hand side, the contours are such that $\Re[z_1+z_2]<0$ in the first integral, so that it vanishes as $s$ goes to zero. Thus, we obtain the claimed formula \eqref{eq:deltaLitllewood} letting $s$ going to zero. Once the identity is established, we may slightly deform the contours as long as we do not encounter poles, so that the hypothesis that $\eta>0$ can be relaxed. 
\end{proof}

\begin{lemma}
Fix $u>0$ and real numbers $\eta, \xi$ such that $0\leq \eta+\xi<u$. Then, for a function $f$  holomorphic on $\eta+\I\R$, and a function $g$ holomorphic between the lines $\xi+\I\R$ and $\delta+\I\R$ for some $\delta<-\xi-2\eta$, both having exponential decay at infinity, we have 
\begin{multline}
	\int_{\mathbb R^2} dx \int_{(\eta+\I\R)^2} \frac{d\mathbf z}{(2\I\pi)^2}\int_{(\xi+\I\R)^2} \frac{d\mathbf w}{(2\I\pi)^2}  f(\mathbf z)g(\mathbf w) \Psi_{\mathbf z}(x)\Psi_{\mathbf w}(x)e^{-u(x_1-x_2)} = \\ 
	\int_{(\eta+\I\R)^2} \frac{d\mathbf z}{(2\I\pi)^2}\int_{\xi+\I\R } \frac{d w}{2\I\pi} f(\mathbf z)g(w,-w-z_1-z_2) \frac{\Gamma(\pm (z_1+w)+u)\Gamma(\pm(z_2+w)+u)}{\Gamma(2u)} .
	\label{eq:deltaCauchy}
 \end{multline}
\label{lem:deltaCauchy}
\end{lemma}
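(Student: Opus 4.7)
The plan is to mimic the strategy used in the proof of Lemma \ref{lem:deltaLitllewood}: insert a regularizer $e^{-se^{-x_2}}$ so that Lemma \ref{lem:specialcomputationWhittaker} becomes applicable, apply Fubini to exchange $\int dx$ with $\int dz\,dw$, then shift one of the $w$-contours to the left so as to extract the relevant residue, and finally let $s\to 0$.

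Concretely, I would start by considering
\begin{equation*}
I(s) := \int_{\mathbb R^2}\!dx \int_{(\eta+\I\R)^2}\!\frac{d\mathbf z}{(2\I\pi)^2}\int_{(\xi+\I\R)^2}\!\frac{d\mathbf w}{(2\I\pi)^2} f(\mathbf z)g(\mathbf w)\,\Psi_{\mathbf z}(x)\Psi_{\mathbf w}(x)e^{-se^{-x_2}}e^{-u(x_1-x_2)}.
\end{equation*}
The factor $e^{-se^{-x_2}}$ provides doubly exponential decay as $x_2\to -\infty$; combined with the bound $|\Psi_{\mathbf z}(x)|\leq \Psi_{(\eta,\eta)}(x)$ (and similarly for $\mathbf w$) that follows directly from the Givental representation \eqref{eq:defWhittakerGivental}, and the exponential decay of $f$ and $g$ on their contours, one checks that the full integrand is absolutely integrable. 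Fubini then lets me perform the $x$-integral first. Applying Lemma \ref{lem:specialcomputationWhittaker} with $\alpha_i=z_i$ and $\beta_j=w_j$ yields
\begin{equation*}
I(s) = \int_{(\eta+\I\R)^2}\!\frac{d\mathbf z}{(2\I\pi)^2}\int_{(\xi+\I\R)^2}\!\frac{d\mathbf w}{(2\I\pi)^2}f(\mathbf z)g(\mathbf w)\, s^{-(z_1+z_2+w_1+w_2)}\frac{\Gamma(z_1+z_2+w_1+w_2)}{\Gamma(z_1+z_2+w_1+w_2+2u)}\prod_{i,j=1}^2\Gamma(z_i+w_j+u).
\end{equation*}

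Next, I would shift the contour for $w_2$ from $\xi+\I\R$ to $\delta+\I\R$. The only pole of the integrand in the strip $\delta\leq \Re[w_2]\leq \xi$ is the simple pole of $\Gamma(z_1+z_2+w_1+w_2)$ at $w_2=-(z_1+z_2+w_1)$ (which sits at real part $-2\eta-\xi$, hence in the strip). One must check that no poles of $\Gamma(z_i+w_2+u)$ are encountered: their real parts are $\leq -\eta-u$, which is less than $-2\eta-\xi$ precisely because $\eta+\xi<u$, so a choice of $\delta$ with $-u-\eta<\delta<-2\eta-\xi$ (compatible with the holomorphy domain of $g$ assumed in the statement) avoids them. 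The residue at the pole $w_2=-(z_1+z_2+w_1)$ contributes $1/\Gamma(2u)$ times $g(w_1,-z_1-z_2-w_1)\,\Gamma(z_1+w_1+u)\Gamma(z_2+w_1+u)\Gamma(u-z_2-w_1)\Gamma(u-z_1-w_1)$, which after renaming $w_1\to w$ is exactly the right-hand side of \eqref{eq:deltaCauchy}.

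It remains to show that the integral on the shifted contour vanishes as $s\to 0$. On $\delta+\I\R$, the real part of the exponent in $s^{-(z_1+z_2+w_1+w_2)}$ is $-(2\eta+\xi+\delta)>0$, so $s^{-(z_1+z_2+w_1+w_2)}\to 0$; together with the exponential decay of $f,g$ and the standard decay $|\Gamma(a+\I b)|\leq Ce^{-\pi|b|/2}$, dominated convergence yields the desired limit. Setting $s\to 0$ on both sides concludes the proof, since $I(s)$ converges on the left-hand side to the iterated integral appearing in \eqref{eq:deltaCauchy} (again by dominated convergence, using that $e^{-se^{-x_2}}\to 1$ pointwise and the absolute integrability in the $x$ variable once $z,w$ are fixed follows from the $e^{-u(x_1-x_2)}$ weight and the asymptotics of Whittaker functions at infinity). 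The main obstacle is this last step: verifying absolute integrability of the $s=0$ integrand, which requires combining the $e^{-u(x_1-x_2)}$ factor with the exponential decay of the Whittaker functions in the Weyl chamber and their doubly exponential decay outside of it.
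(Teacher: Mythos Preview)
Your proposal is correct and follows essentially the same route as the paper: regularize with $e^{-se^{-x_2}}$, apply Lemma \ref{lem:specialcomputationWhittaker} after Fubini, shift the $w_2$-contour to pick up the residue at $w_2=-(z_1+z_2+w_1)$, and let $s\to 0$. The only point the paper makes explicit that you gloss over is that Lemma \ref{lem:specialcomputationWhittaker} requires $\Re[z_1+z_2+w_1+w_2]>0$, i.e.\ $\eta+\xi>0$, so one should first argue under this strict inequality and then recover the boundary case $\eta+\xi=0$ by a final contour deformation; conversely, your discussion of which poles are (not) crossed during the $w_2$-shift and of the $s\to 0$ limit on the left-hand side is more detailed than the paper's.
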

\begin{proof}
	First assume that $\xi+\eta>0$. Using Lemma \ref{lem:specialcomputationWhittaker}, we have, for $s>0$, 
	\begin{multline}
	 \int_{(\eta+\I\R)^2} \frac{d\mathbf z}{(2\I\pi)^2}\int_{(\xi+\I\R)^2} \frac{d\mathbf w}{(2\I\pi)^2}  f(\mathbf z)g(\mathbf w) 	\int_{\mathbb R^2} dx \Psi_{\mathbf z}(x)\Psi_{\mathbf w}(x)e^{-se^{-x_2}}e^{-u(x_1-x_2)} = \\ 
		\int_{(\eta+\I\R)^2} \frac{d\mathbf z}{(2\I\pi)^2}\int_{(\xi+\I\R)^2 } \frac{d \mathbf w}{2\I\pi} f(\mathbf z)g(\mathbf w) s^{-(z_1+w_1+z_2+w_2)}\\ \times \frac{\Gamma(z_1+z_2+w_1+w_2)}{\Gamma(z_1+z_2+w_1+w_2+2u)} \prod_{1\leq i,j\leq 2} \Gamma(z_i+w_j+u).  
		\label{eq:identityCauchywithspositive}
	\end{multline}
Using a similar argument as in the proof of Lemma \ref{lem:deltaLitllewood}, we may exchange the integrations over $x$ and over $z$ in the left-hand side. Moreover, in the right-hand side, if we shift to the left the integration contour for $w_2$, so as to pick the pole at $w_2=-z_1-z_2-w_1$, we  obtain 
\begin{multline}
\int_{(\eta+\I\R)^2} \frac{d\mathbf z}{(2\I\pi)^2} \int_{\xi+\I\R } \frac{d w_1}{2\I\pi} \int_{\delta +\I\R } \frac{d w_2}{2\I\pi}  f(\mathbf z)g(\mathbf w) s^{-(z_1+w_1+z_2+w_2)}\\ \times \frac{\Gamma(z_1+z_2+w_1+w_2)}{\Gamma(z_1+z_2+w_1+w_2+2u)} \prod_{1\leq i,j\leq 2} \Gamma(z_i+w_j+u) \\ 
+ \int_{(\eta+\I\R)^2} \frac{d\mathbf z}{(2\I\pi)^2} \int_{\xi+\I\R } \frac{d w_1}{2\I\pi} f(\mathbf z)g(w_1, -w_1-z_1-z_2) \frac{\Gamma(\pm (z_1+w_1)+u) \Gamma(\pm(z_2+w_1)+u)}{\Gamma(2u)},  
\end{multline}
with $\delta<-\xi-2\eta$. In the first integral, $\Re[z_1+w_1+z_2+w_2]<0$, so that the integral vanishes when $s$ goes to zero. Thus, we obtain the desired formula \eqref{eq:deltaCauchy} by letting $s$ go to zero in \eqref{eq:identityCauchywithspositive} and we can finally slightly deform the contours to relax the assumption $\xi+\eta>0$.
\end{proof}

\begin{proof}[Proof of Theorem \ref{th:formulaWhittaker}] 
The proof is different from that of Theorem \ref{theo:formulaSchur}. We will actually not obtain an analogue of Proposition \ref{prop:formulaSchur} but proceed more directly, with the help of slightly different Whittaker functions identities.  

Since the formulas will be rather intricate, we propose to the reader to first compute the normalization constant, in order to get a overall idea of the computation. By definition, 
\begin{equation}
\mathcal Z_{\rm LG}^{\boldsymbol{\alpha}, u,v}(N) = \int_{\mathbb R^2}d\mu^0 \int_{\R^2}d\mu^1 e^{-u(\mu_1^0-\mu_2^0)}e^{-v(\mu_1^1-\mu_2^1)} \Psi^{(2)}_{\boldsymbol{\alpha}}(\mu^1/\mu^0) \delta_0(\mu_2^0). 
\end{equation}
Inserting the formula from Lemma \ref{lem:integralWhittaker}, we find that $\mathcal Z_{\rm LG}^{\boldsymbol{\alpha}, u,v}(N)$ equals 
\begin{equation*}
	  \int_{\mathbb R^2}d\mu^0 \int_{\R^2}d\mu^1 \int_{\I\R}\frac{dz_1}{2\I \pi }\int_{\I\R}\frac{dz_2}{2\I\pi }\Delta(z)  e^{-u(\mu_1^0-\mu_2^0)}e^{-v(\mu_1^1-\mu_2^1)} \Psi_{z}(\mu^0) \Psi_{-z}(\mu^1)  \prod_{j=1}^2\prod_{i=1}^N \Gamma(\alpha_i+z_j) \delta_0(\mu_2^0). 
\end{equation*}
One can exchange the integral over $\mu^0$ and the integral over $z_1,z_2$ (to apply Fubini's theorem, one uses the decay in $z_1,z_2$ coming from the Gamma functions, while the decay in the variables $\mu^0_1,\mu_0^2$ will be clear in the explicit computation below). The integration over $\mu^0$  can be computed explicitly using (below we set $\mu_1^0=\mu_1, \mu_2^0=\mu_2, $  for readability)
\begin{multline}
	\int_{\R^2} d\mu_1 d\mu_2  \delta_0(\mu_2) e^{-u(\mu_1-\mu_2)} \Psi_{z_1, z_2}(\mu_1,\mu_2)  =  \\ 
	\int_{\mathbb R} d\mu_1 \int_{\R}dy  e^{-e^{-(\mu_1-y)}-e^{-y}} e^{-z_1(\mu_1-y)- z_2y} e^{-u\mu_1} =
	\Gamma(z_1+u)\Gamma(z_2+u).
	\label{eq:integralovermu0}
\end{multline}
The integral over $\mu^1$, however,  cannot be computed using the Littlewood type identity  (Lemma \ref{lem:specialcomputationWhittaker}), as we did in the proof of Proposition \ref{prop:formulaSchur}.  Instead, we should use Lemma \ref{lem:deltaLitllewood}, that is, the formula 
$$ \int_{\R^2} d\mu \Psi_{z}(\mu) e^{-v(\mu_1-\mu_2)}  = \delta_0(z_1+z_2) \Gamma(v+z_1)\Gamma(v+z_2).$$
This equation is valid when both sides are integrated against a function of $(z_1,z_2)$ with exponential decay at infinity, and one checks that the Gamma function factors in \eqref{eq:integralovermu0} do have such decay. Thus, this yields (letting $z_1=z$)
\begin{equation}
\mathcal Z_{\rm LG}^{\boldsymbol{\alpha}, u,v}(N) = \frac{1}{2}\int_{\I\R} dz \frac{\Gamma(\pm z+u)\Gamma(\pm z +v)\prod_{i=1}^N\Gamma(\pm z+\alpha_i)}{\Gamma(2z)\Gamma(-2z)},
\end{equation}
where we have used the short-hand notation $\Gamma(\pm z+c):= \Gamma(z+c)\Gamma(-z+c)$. 

\medskip 
Now, we turn to the general case. For the moment, we assume that integers $0= x_0< x_1< \dots< x_k= N$ are such that  $x_{i+1}-x_i\geq 2$ for all $1\leq i\leq k-1$.  We need to compute the average of 	$\prod_{i=1}^k e^{-t_i(\lambda_1^{x_i}-\lambda_1^{x_{i-1}})}$ with respect to the density given in \eqref{eq:marginaldistributionWhittaker}. By definition, 
	\begin{multline} 
		\mathbb E^{\boldsymbol{\alpha}, u, v}\left[\prod_{i=1}^k e^{-t_i(\lambda_1^{x_i}-\lambda_1^{x_{i-1}})} \right] =   \frac{1}{\mathcal Z_{\rm LG}^{\boldsymbol{\alpha}, u,v}(N)}
		 \int_{\mathbb R^{2(k+1)}} d\boldsymbol{\mu} \delta_0(\mu_2^{0})      e^{-u (\mu_1^{0}-\mu_2^{0})}e^{-v(\mu_1^{k}-\mu_2^{k})} \\ \times \prod_{i=1}^k \Psi^{(2)}_{\alpha_{x_{i-1}+1},\dots,\alpha_{x_i}}(\mu^{i}/\mu^{i-1})   e^{-t_i(\mu_1^{i}-\mu_1^{{i-1}})}.
	\label{eq:laplacetransformstartingW}
\end{multline}
Plugging Lemma  \ref{lem:integralWhittaker} (here we use the assumption that $x_{i+1}-x_i\geq 2$), we obtain 
\begin{multline*}  
	\mathbb E^{\boldsymbol{\alpha}, u, v}\left[\prod_{i=1}^k e^{-t_i(\lambda_1^{x_i}-\lambda_1^{x_{i-1}})} \right] =   \frac{1}{\mathcal Z_{\rm LG}^{\boldsymbol{\alpha}, u,v}(N)}
	 \int_{\mathbb R^{2(k+1)}} d\boldsymbol{\mu}  \\  \iint_{\I\R^2}\nu(dz^1) \dots  \iint_{\I\R^2}\nu(dz^k)   \delta_0(\mu_2^{0})   e^{-u (\mu_1^{0}-\mu_2^{0})}e^{-v(\mu_1^{k}-\mu_2^{k})} \\ 
	\prod_{i=1}^k   \left(\Psi_{z^i-t_i/2}(\mu^{i-1}) \Psi_{-z^i+t_i/2}(\mu^{i})  e^{-\frac{t_i}{2}(\mu_1^{i}-\mu_2^i -\mu_1^{{i-1}}+\mu_2^{i-1})} \prod_{j=1}^2 \prod_{r=x_{i-1}+1}^{x_i} \Gamma(\alpha_r+z_j^i) \right)
\end{multline*}
where $z^i=(z^i_1, z^i_2)$  and  $\nu(dz) = \Delta(z)\frac{dz_1}{2\I\pi z_1}\frac{dz_2}{2\I\pi z_2}$. Let us perform the change of variables $(z_1^i-t_i/2, z_2^i-t_i/2)\to (z_1^i, z_2^i)$ so that 
\begin{multline}  
	\mathbb E^{\boldsymbol{\alpha}, u, v}\left[\prod_{i=1}^k e^{-t_i(\lambda_1^{x_i}-\lambda_1^{x_{i-1}})} \right] =   \frac{1}{\mathcal Z_{\rm LG}^{\boldsymbol{\alpha}, u,v}(N)}
	\int_{\mathbb R^{2(k+1)}} d\boldsymbol{\mu}  \\  \iint_{\I\R^2}\nu(dz^1) \dots  \iint_{\I\R^2}\nu(dz^k)   \delta_0(\mu_2^{0})   e^{-u (\mu_1^{0}-\mu_2^{0})}e^{-v(\mu_1^{k}-\mu_2^{k})} \\ 
	\prod_{i=1}^k   \left(\Psi_{z^i}(\mu^{i-1}) \Psi_{-z^i}(\mu^{i})  e^{-\frac{t_i}{2}(\mu_1^{i}-\mu_2^i -\mu_1^{{i-1}}+\mu_2^{i-1})} \prod_{j=1}^2 \prod_{r=x_{i-1}+1}^{x_i} \Gamma(\alpha_r+t_i/2+z_j^i) \right).
	\label{eq:applicationskewformulaW}
\end{multline}
We note that after the change of variables, the integration contour for the variables $z_1^i, z_2^i$ should have become $\I\R-t_i/2$, but we may shoft the contour back to $\I\R$, provided that we do not cross singularities during the contour deformation. This is why we have assumed that $\alpha_r+\Real[t_i/2]>0$ in the statement of the theorem. 

Now, we will integrate over all the $\mu^i$ in \eqref{eq:applicationskewformulaW}. 
The integral over $\mu^0$
can be computed as before. Again, we set $\mu_1^0=\mu_1, \mu_2^0=\mu_2, $ and $t_1=t$ for readability, and since $\Real[t_1]<2u$, the integral is computed as 
\begin{multline}
	\int_{\R^2} d\mu_1 d\mu_2  \delta_0(\mu_2) e^{-(u-t/2)(\mu_1-\mu_2)} \Psi_{z_1^1, z_2^1}(\mu_1,\mu_2)  =  \\ 
	\int_{\mathbb R} d\mu_1 \int_{\R}dy  e^{-e^{-(\mu_1-y)}-e^{-y}} e^{-z_1^1(\mu_1-y)- z_2^1y} e^{-(u-t/2)\mu_1} =
	\Gamma(z_1^1-t/2+u)\Gamma(z_2^1-t/2+u).
\end{multline}

Then, the integral over $\mu^1$ can be computed using Lemma \ref{lem:deltaCauchy}. Indeed, the Gamma factors decay exponentially as $z_1,z_2$ go to infinity along vertical lines, and we recall that we have assumed that $x_{i+1}-x_i\geq 2$, so that we have enough Gamma factors to compensate those in the denominator in the measure $\nu(dz^1)$. Using that $\Real[t_1-t_2]>0$, this yields the rather intricate formula 
\begin{multline*}  
	\mathbb E^{\boldsymbol{\alpha}, u, v}\left[\prod_{i=1}^k e^{-t_i(\lambda_1^{x_i}-\lambda_1^{x_{i-1}})} \right] =   \frac{1}{\mathcal Z_{\rm LG}^{\boldsymbol{\alpha}, u,v}(N)}
	\int_{\mathbb R^{2(k+1)}} d\mu^2\dots d\mu^k  \\  \int_{\I\R}dz^1 \frac{1}{2\Gamma(2z^1)} \iint_{\I\R^2}\nu(dz^2) \dots  \iint_{\I\R^2}\nu(dz^k)   e^{-v(\mu_1^{k}-\mu_2^{k})} \\ 
	\Gamma(z^1-t_1/2+u)\Gamma(z^1-z_1^2-z_2^2-t_1/2+u) \frac{\Gamma\left(\pm(z_1^2+z^1) +\frac{t_1-t_2}{2}\right)\Gamma\left(\pm(z_2^2+z^1) +\frac{t_1-t_2}{2}\right)}{\Gamma(t_1-t_2)}\\
	\prod_{r=x_{0}+1}^{x_1} \Gamma(\alpha_r+t_1/2+z^1) \Gamma(\alpha_r+t_1/2+z^1-z_1^2-z_2^2)    
	\Psi_{-z^2}(\mu^{2})  e^{-\frac{t_2}{2}(\mu_1^{2}-\mu_2^2)} \prod_{j=1}^2 \prod_{r=x_{1}+1}^{x_2} \Gamma(\alpha_r+t_2/2+z_j^2) \\
	\prod_{i=3}^k   \left(\Psi_{z^i}(\mu^{i-1}) \Psi_{-z^i}(\mu^{i})  e^{-\frac{t_i}{2}(\mu_1^{i}-\mu_2^i -\mu_1^{{i-1}}+\mu_2^{i-1})} \prod_{j=1}^2 \prod_{r=x_{i-1}+1}^{x_i} \Gamma(\alpha_r+t_i/2+z_j^i) \right).
\end{multline*}
Eventually, the formula will simplify, but at this point, we must go on, summing over $\mu^2, \mu^3, \dots$ using Lemma \ref{lem:deltaCauchy}. After summing over all the $\mu^i$ up to $\mu^{k-1}$, using that $\Real[t_i-t_{i+1}]>0$,  we arrive at 
\begin{multline*}  
	\mathbb E^{\boldsymbol{\alpha}, u, v}\left[\prod_{i=1}^k e^{-t_i(\lambda_1^{x_i}-\lambda_1^{x_{i-1}})} \right] =   \frac{1}{\mathcal Z_{\rm LG}^{\boldsymbol{\alpha}, u,v}(N)} \int_{\I\R}dz^1 \frac{1}{2\Gamma(2z^1)} \dots \int_{\I\R}dz^{k-1} \frac{1}{2\Gamma(2z^{k-1})} \\ 
	\int_{\mathbb R^{2(k+1)}}  d\mu^k       \iint_{\I\R^2}\nu(dz^k)   e^{-(v+t_k/2)(\mu_1^{k}-\mu_2^{k})}\Psi_{-z^k}(\mu^{k}) \prod_{j=1}^2 \prod_{r=x_{k-1}+1}^{x_k} \Gamma(\alpha_r+t_k/2+z_j^k) \\ 
	\times \Gamma(z^1-t_1/2+u)\Gamma(z^1-z_1^k-z_2^k-t_1/2+u) \frac{\Gamma\left(\pm(z^{k}_1+z^{k-1}) +\frac{t_{k-1}-t_{k}}{2}\right)\Gamma\left(\pm(z_2^{k}+z^{k-1}) +\frac{t_{k-1}-t_{k}}{2}\right)}{\Gamma(t_{k-1}-t_{k})} \\ 
\prod_{i=1}^{k-2}	\frac{\Gamma\left(\pm(z^{i+1}+z^i) +\frac{t_i-t_{i+1}}{2}\right)\Gamma\left(\pm(-z^{i+1}-z_1^k-z_2^k+z^i) +\frac{t_i-t_{i+1}}{2}\right)}{\Gamma(t_i-t_{i+1})}\\
\prod_{i=1}^{k-1} \prod_{r=x_{i-1}+1}^{x_i} \Gamma(\alpha_r+t_i/2+z^i) \Gamma(\alpha_r+t_i/2-z^i-z_1^k-z_2^k).
\end{multline*}
Finally, the integral over $\mu^k$ can be computed using Lemma \ref{lem:deltaLitllewood}. Again, one readily checks that the integrand has enough decay in variables $z_1^k, z_2^k$. After applying Lemma \ref{lem:deltaLitllewood} using that $\Real[t_k]>-2v$, all occurrences of $z_1^k+z_2^k$ are replaced by zero, so that the formula above simplifies and one obtains
\begin{multline*}  
	\mathbb E^{\boldsymbol{\alpha}, u, v}\left[\prod_{i=1}^k e^{-t_i(\lambda_1^{x_i}-\lambda_1^{x_{i-1}})} \right] =   \frac{1}{\mathcal Z_{\rm LG}^{\boldsymbol{\alpha}, u,v}(N)} \int_{\I\R}dz^1 \frac{1}{2\Gamma(2z^1)} \dots \int_{\I\R}dz^{k} \frac{1}{2\Gamma(2z^{k})} \\ 
	\Gamma(v+t_k/2\pm z^k) \Gamma(\pm z^1-t_1/2+u) \prod_{i=1}^{k} \prod_{r=x_{i-1}+1}^{x_i} \Gamma(\alpha_r+t_i/2\pm z^i)  \\ 
	\prod_{i=1}^{k-1}	\frac{\Gamma\left(\pm(z^{i+1}+z^i) +\frac{t_i-t_{i+1}}{2}\right)\Gamma\left(\pm(-z^{i+1}-z_1^k-z_2^k+z^i) +\frac{t_i-t_{i+1}}{2}\right)}{\Gamma(t_i-t_{i+1})}.
\end{multline*}
Finally, denoting  $z_1^i$ by  $z_i$ for simplicity, we arrive at the claimed formula \eqref{eq:LaplacetransformWhittaker}.
\bigskip 

At this point, we have proved the theorem when the $x_i$ are such that $x_i-x_{i+1}\geq 2$. Consider now an arbitrary sequence of integers $0=x_0<x_1<\dots<x_k=N$ and a sequence   $\lambda^0, \dots, \lambda^N$ distributed according to $\mathbb P^{\boldsymbol{\alpha}, u,v}$ defined in \eqref{eq:marginaldistributionWhittaker}, with  $\boldsymbol\alpha=(\alpha_1, \dots, \alpha_N)$. It will be convenient to denote by $\mathcal I_N(\boldsymbol{\alpha}, u,v,\vec x)$ the integral expression in the right-hand side of \eqref{eq:LaplacetransformWhittaker}. 

Let $M=2N$ and $\tilde x_i=2x_i$. Consider the probability measure $\mathbb P^{\boldsymbol{\tilde{\alpha}}, u,v}$ on sequences $\tilde\lambda^0, \dots, \tilde\lambda^M \in \R^{2M}$ with inhomogeneity parameters $\boldsymbol{\tilde{\alpha}}=(e^c, \alpha_1 , e^c, \alpha_2, \dots, e^c, \alpha_N)$ where $c$ is an additional parameter. We have already proved that 
\begin{equation}
	\mathbb E^{\boldsymbol{\tilde\alpha}, u, v}\left[\prod_{i=1}^k e^{-t_i(\tilde\lambda_1^{\tilde x_i}-\tilde\lambda_1^{\tilde x_{i-1}})} \right] = \mathcal I_M(\boldsymbol{\tilde\alpha}, u, v, \vec{\tilde x})
	\label{eq:specialcasetheorem}
\end{equation}
We claim that as $c$ goes to $+\infty$, 
\begin{equation}
	e^{-c\sum_{i=1}^k t_i (x_i-x_{i-1})}\mathbb E^{\boldsymbol{\tilde\alpha}, u, v}\left[\prod_{i=1}^k e^{-t_i(\tilde\lambda_1^{\tilde x_i}-\tilde\lambda_1^{\tilde x_{i-1}})} \right] \xrightarrow{c\to +\infty} 	\mathbb E^{\boldsymbol{\alpha}, u, v}\left[\prod_{i=1}^k e^{-t_i(\lambda_1^{x_i}-\lambda_1^{x_{i-1}})} \right]
\label{eq:toprove1}
\end{equation}
and 
\begin{equation}
	e^{-c\sum_{i=1}^k t_i (x_i-x_{i-1})} \mathcal I_M(\boldsymbol{\tilde\alpha}, u, v, \vec{\tilde x}) \xrightarrow{c\to +\infty} \mathcal I_N(\boldsymbol{\alpha}, u,v,\vec x),	 
\label{eq:toprove2}
\end{equation}
so that by letting $c\to +\infty$ in \eqref{eq:specialcasetheorem}, we obtain \eqref{eq:LaplacetransformWhittaker}. Thus, it  remains to prove \eqref{eq:toprove1}  and \eqref{eq:toprove2}. 

In order to prove \eqref{eq:toprove2}, it is convenient to divide both the $k$-fold integral in $\mathcal I_M(\boldsymbol{\tilde\alpha}, u, v, \vec{\tilde x})$ and the normalization constant by $\Gamma(e^c)^{2M}$. Using 
$$ \frac{\Gamma(e^c\pm z)}{\Gamma(e^c)^2}\xrightarrow{c\to\infty} 1,$$ 
we see that $\mathcal Z_{\rm LG}^{\boldsymbol{\tilde\alpha}, u, v}(M) \xrightarrow{c\to\infty} \mathcal Z_{\rm LG}^{\boldsymbol{\alpha}, u, v}(N) $, and using 
$$ e^{-ct_i}\frac{\Gamma(e^c+t_i/2\pm z_i)}{\Gamma(e^c)^2}\xrightarrow{c\to\infty} 1,$$ 
we see (applying dominated convergence) that \eqref{eq:toprove2} holds. 

In order to prove \eqref{eq:toprove1} we recall the definition of the two-layer Whittaker measure \eqref{eq:deftwolayerWhittaker}. We will use the following asymptotics: for any $\mu\in \R^2$ and a bounded continuous function $f$, we have the limit,
\begin{equation}
	\frac{1}{\Gamma(e^c)} \int_{\mathbb R^2}dx  \Psi^{(2)}_{e^c}(\lambda/\mu) f(x) \xrightarrow[c\to\infty]{} f(0,0),
	\label{eq:limitcinfinity}
\end{equation}
where we let $\mathbf c=(c,c)\in \R^2$ and $\lambda=\mu-\mathbf c+ x $.

In probabilistic terms, this means that under the sub-Markov kernel 
$$\mathsf Q(\mu, \lambda) = \frac{1}{\Gamma(e^c)^2} \Psi^{(2)}_{e^c}(\lambda/\mu),$$ the law of  $\lambda-\mu+\mathbf c$ converges to $0$ as $c$ goes to infinity. Indeed, this kernel can be seen as the density that $\lambda_1-\mu_1$ is the logarithm of a $\mathrm{Gamma}^{-1}(e^c)$ random variable (so that as $c\to\infty$, $\lambda_1-\mu_1+c$ goes to zero), times the density that $\lambda_2-\mu_2$ is the logarithm of another independent  $\mathrm{Gamma}^{-1}(e^c)$ random variable, times a factor $e^{-e^{(\mu_1-\la_2)}}$ which goes to $1$ as $c$ goes to infinity. Using \eqref{eq:limitcinfinity}, one obtains that the law $\mathbb P^{\tilde{\boldsymbol{\alpha}}, u,v}$ of the sequence 
$$(\tilde\lambda^0, \tilde\lambda^2+ \mathbf c, \tilde\lambda^4+2\mathbf c, \dots, \tilde\lambda^M+N\mathbf c)$$ weakly converges to the law $\mathbb P^{\boldsymbol\alpha, u,v} $ of the sequence $(\lambda^0, \dots, \lambda^N)$. 
Since weak convergence implies convergence of the Laplace transforms, we obtain \eqref{eq:toprove1}, which concludes the proof of Theorem \ref{th:formulaWhittaker}.
\end{proof}

\subsection{Proof of Theorem \ref{theo:formulaloggammaintro}}
\label{sec:proofWhittaker}
In view of the identity in distribution \eqref{eq:twodescriptions} in the Whittaker case, the formula  \ref{eq:LaplacetransformWhittakerintro} is a special case of Theorem \ref{th:formulaWhittaker} when $\alpha_1=\dots=\alpha_N=\alpha$ and the $t_i$ are real. Thus, it only remains to justify the analytic continuation. 

Assume that the $t_i$ satisfy the extra assumption that $t_i>-\alpha+ \max\lbrace \vert u\vert , \vert v\vert\rbrace$.
Let us explain  why 
\begin{equation}
	\mathbb E_{\rm LG}^{\alpha, u, v}\left[\prod_{i=1}^N e^{- t_i(\mathbf L_1(i)-\mathbf L_1(i-1))} \right] 
	\label{eq:generalexpectation}
\end{equation}
is analytic in the variable $u\in (-\alpha, +\infty)$ for any fixed $v\in (-\alpha, \infty)$. For the same reason, the same expectation  is analytic in the variable $v$, for any fixed $u$. 
Letting $X_1(j)=L_1(j)-L_1(j-1)$ and $X_2(j)=L_2(j)-L_2(j-1)$, the probability density $P_u(\mathbf X)$ of the vector $\mathbf X= (X_1(1), \dots X_1(N), X_2(1), X_2(N))\in \R^{2N}$ satisfies the following  property (\cite[Prop. 3.18 and 3.16]{barraquand2023stationary}): for any $u_0>-\alpha$, there exist a complex neighborhood $U$ containing  $u_0$ such that  
\begin{itemize}
	\item For any $\mathbf X\in \R^{2N}$, the function $u\mapsto P_u(\mathbf X)$ is holomorphic on $U$. 
	\item There exists constants $C,r>0$ such that, for any $(u, \mathbf X)\in U\times\mathbf R^N$, 
	$$ \left\vert P_u(\mathbf X) \right\vert \leq Ce^{-r \sum_{j=1}^N \vert X_1(j)\vert +\vert X_2(j)\vert}.$$ 
\end{itemize}
Moreover, inspecting the proof of \cite[Prop. 3.18]{barraquand2023stationary}, the constant $r$ above can be chosen as  $r= \min\lbrace \alpha-\vert u\vert, \alpha-\vert v\vert\rbrace = \alpha-\max\lbrace \vert u\vert, \vert v\vert\rbrace$.  
Integrating the density $P_u(\mathbf X)$ against the function $\prod_{i=1}^N e^{-t_i X(j)}$ on $\mathbb R^{2N}$, which is possible as long as $t_i+r>0$, one can deduce using e.g. \cite[Lemma 3.20]{barraquand2023stationary}  that \eqref{eq:generalexpectation} is analytic in the variable $u$ on $(-\alpha, +\infty)$. 

Thus, given $u,v$ such that $\alpha+u, \alpha+v>0$, if the $t_i$ satisfy  $t_i>-\alpha+ \max\lbrace \vert u\vert , \vert v\vert\rbrace$, the value of the expectation \eqref{eq:generalexpectation} is given by the analytic continuation of the integral formula in the right-hand side of \eqref{eq:LaplacetransformWhittakerintro} in the variable $u$, or in the variable $v$, or in both variables one after the other. The latter case arises when $u,v<0$ and the result must not depend on the order in which the analytic continuations are taken. 

\subsection{Markovian description of the two-layer open Whittaker process}
\label{sec:MarkovWhittaker}
In this section we describe the two-layer Whittaker process \eqref{eq:deftwolayerWhittaker} as a Markov process with explicit transition density. The arguments are parallel to the Schur case treated in Section \ref{sec:MarkovianSchur}. 

 Define a probability density $\mathsf P_0(\la^0)$ for $\lambda^0\in \R^2$ by 
\begin{equation}
	\mathsf P_0(\la) = e^{-u(\la_1-\la_2)} H_{0,N}(\la_1-\la_2)\delta_0(\la_2),
	\label{eq:defp0Whittaker}
\end{equation}
where   the functions $H_{x,N}:\R\to \mathbb R_+$ are defined by 
\begin{equation}
	H_{x,N}(\ell)  = \frac{Q_{N-x}(\ell)}{\sum_{\ell\geq 0} e^{-u\ell} Q_N(\ell)}
	\label{eq:defH}
\end{equation} 
with, for $M\leq N$, 
\begin{equation*}
	Q_M(\ell) =\prod_{i=N-M+1}^N\frac{1}{\Gamma(\alpha_i)^{2}} \int_{\R^2}d\lambda  \Psi_{\alpha_{N-M+1}, \dots, \alpha_N} (\la/(\ell,0)) e^{-v(\la_1-\la_2)}. 
\end{equation*} 
Define further transition densities  
\begin{equation*}
	\mathsf P_{x,y}(\la, \mu) = \prod_{i=x+1}^y\frac{1}{\Gamma(\alpha_i)^{2}}\Psi_{\alpha_{x+1}, \dots, \alpha_y}(\mu/\la) \frac{H_{y,N}(\mu_1-\mu_2)}{H_{x,N}(\la_1-\la_2)}. 
	\label{eq:transitionkernelWhittaker}
\end{equation*}
Using the branching rule \eqref{eq:defskewWhittaker} one can verify that  $\mathsf P_{x,y}(\la, \mu)$ defines a probability density on $\mu\in \R^2$ for every given  $\la$ and $x<y$. The branching rule also implies that $\mathsf P_{x,y}$ satisfy a semi-group like property 
\begin{equation}
	\int_{\R^2}d\kappa \mathsf P_{x,y}(\la, \kappa) \mathsf P_{y,z}( \kappa, \mu) =  \mathsf P_{x,z}(\la, \mu).
\end{equation}
\begin{proposition}
	The process $(\la^x)_{x\in \llbracket 1,N\rrbracket}$ (defined by \eqref{eq:deftwolayerWhittaker}) is a time-inhomogeneous Markov chain on $\R^2$ with transition kernel $\mathsf P_{x,y}(\la^x, \la^y)$ and initial density $\mathsf P_0(\la^0)$.
	\label{prop:markoviandescriptionWhittaker}
\end{proposition}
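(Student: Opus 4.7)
The approach mirrors the proof of Proposition \ref{prop:markoviandescription} in the Schur case: it suffices to check that for every $0 = x_0 < x_1 < \dots < x_k \leq N$, the joint marginal density of $(\lambda^{x_0}, \dots, \lambda^{x_k})$ under \eqref{eq:deftwolayerWhittaker} agrees with $\mathsf P_0(\mu^0)\prod_{i=1}^k \mathsf P_{x_{i-1},x_i}(\mu^{i-1},\mu^i)$. Matching all finite-dimensional marginals identifies the law of the process with the stated Markov chain.

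The first step is to integrate out the variables $\lambda^j$ with $j\in \llbracket 1, N-1\rrbracket\setminus\{x_1,\ldots,x_{k-1}\}$. For indices $j\leq x_k$, the branching rule \eqref{eq:defskewWhittaker} directly produces the combined factors $\Psi^{(2)}_{\alpha_{x_{i-1}+1},\dots,\alpha_{x_i}}(\mu^i/\mu^{i-1})$ appearing in \eqref{eq:marginaldistributionWhittaker}. For $j>x_k$, the same branching rule, followed by integration against the boundary weight $e^{-v(\lambda_1^N-\lambda_2^N)}$, leaves the factor
\begin{equation*}
R(\mu^k) = \int_{\R^2} \Psi^{(2)}_{\alpha_{x_k+1},\dots,\alpha_N}(\lambda/\mu^k)\, e^{-v(\lambda_1-\lambda_2)}\, d\lambda.
\end{equation*}
The key observation is that the kernels $\Psi^{(n)}_\alpha$ and $\Psi^{(n,n-1)}_\alpha$ are manifestly invariant under simultaneous diagonal shifts of all their coordinates, since their only dependence on the variables is through the differences $x_i-y_i$ and $y_i-x_{i+1}$; the skew functions defined by \eqref{eq:defskewWhittaker} inherit this invariance. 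Shifting $\lambda$ by $-(\mu_2^k,\mu_2^k)$ and comparing with the definition of $Q_M$ then yields
\begin{equation*}
R(\mu^k) = \prod_{j=x_k+1}^N \Gamma(\alpha_j)^2 \cdot Q_{N-x_k}(\mu_1^k-\mu_2^k).
\end{equation*}
The same shift-and-branching argument applied to the whole normalization \eqref{eq:deftwolayerWhittaker} gives $\mathcal Z_{\rm LG}^{\boldsymbol\alpha,u,v}(N) = \prod_{j=1}^N\Gamma(\alpha_j)^2 \int_{\R} e^{-u\ell} Q_N(\ell)\,d\ell$, so the denominator in \eqref{eq:defH} agrees with the partition function up to the Gamma factors.

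The final step is algebraic bookkeeping. Substituting these identities into $\mathsf P_0\prod_i \mathsf P_{x_{i-1},x_i}$, the ratios $H_{x_i,N}(\mu_1^i-\mu_2^i)/H_{x_{i-1},N}(\mu_1^{i-1}-\mu_2^{i-1})$ telescope, the $H_{0,N}$ in the numerator cancels the one in $\mathsf P_0$, the product $\prod_{j=1}^{x_k}\Gamma(\alpha_j)^{-2}$ cancels partly against the Gamma factors in $H_{x_k,N}$, and the surviving $Q_{N-x_k}(\mu_1^k-\mu_2^k)\prod_{j=x_k+1}^N\Gamma(\alpha_j)^2$ reproduces exactly the factor $R(\mu^k)$ computed above. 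The only nontrivial input in the whole argument is the diagonal translation invariance of the skew Whittaker functions; every remaining step is a routine tracking of Gamma factors, entirely parallel to the Schur argument.
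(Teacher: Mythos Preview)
Your proof is correct and follows exactly the same approach as the paper, which merely says ``Same as the proof of Proposition \ref{prop:markoviandescription}'': verify that the product $\mathsf P_0(\mu^0)\prod_i \mathsf P_{x_{i-1},x_i}(\mu^{i-1},\mu^i)$ telescopes and matches the marginal density \eqref{eq:marginaldistributionWhittaker}. You have simply filled in the details (the diagonal shift invariance of skew Whittaker kernels, the identification of the tail integral with $Q_{N-x_k}$, and the Gamma bookkeeping) that the paper leaves implicit, and you also correctly write the denominator of $H_{x,N}$ as an integral over $\R$ rather than the sum that appears in \eqref{eq:defH}, which is evidently a typo carried over from the Schur case.
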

\begin{proof}
Same as the proof of Proposition \ref{prop:markoviandescription}. 
\end{proof}

The Markovian description can  be generalized to the more general two-layer Whittaker processes in Definition \ref{def:generaltwolayerWhittaker}. Let 
$$ \mathsf P_x^{\rightarrow}(\lambda, \mu) = \frac{1}{\Gamma(\alpha_x)^2} \Psi_{\alpha_x}(\mu/\lambda)\frac{H_{x,N}(\mu_1-\mu_2)}{H_{x-1,N}(\lambda_1-\lambda_2)} $$
and 
$$ \mathsf P_x^{\downarrow}(\lambda, \mu) = \frac{1}{\Gamma(\alpha_x)^2} \Psi_{\alpha_x}(\lambda/\mu)\frac{H_{x,N}(\mu_1-\mu_2)}{H_{x-1,N}(\lambda_1-\lambda_2)} . $$
\begin{proposition}
	The process $x\mapsto \lambda^x$ from Definition \ref{def:generaltwolayerWhittaker}, associated with a word $w\in \lbrace \rightarrow, \downarrow\rbrace^N$, is a (time-inhomogeneous) Markov process with kernel $\mathsf P_x^{w_x}(\lambda^{x-1},\lambda^x)$ and initial distribution $\mathsf P_0(\la^0)$.
	\label{prop:MarkoviangeneralWhittaker}
\end{proposition}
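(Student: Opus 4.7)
The plan is to mirror the proof of Proposition \ref{prop:Markoviangeneral}, replacing the Schur skew-Cauchy and skew-Littlewood identities \eqref{eq:skewCauchysignatures}--\eqref{eq:skewLittlewoodsignatures} by their Whittaker analogues. Concretely, I will show that for every $0\leq k\leq N$, the product
\[
\mathsf P_0(\lambda^0)\prod_{x=1}^{k}\mathsf P_x^{w_x}(\lambda^{x-1},\lambda^x)
\]
equals the marginal density of $(\lambda^0,\dots,\lambda^k)$ under Definition \ref{def:generaltwolayerWhittaker}. After telescoping the ratios $H_{x,N}/H_{x-1,N}$, this reduces to verifying that
\[
\int_{\mathbb R^{2(N-k)}}\!\!\! d\lambda^{k+1}\cdots d\lambda^N\, e^{-v(\lambda_1^N-\lambda_2^N)}\!\!\prod_{\substack{k<i\leq N\\ w_i=\rightarrow}}\!\!\Psi^{(2)}_{\alpha_i}(\lambda^i/\lambda^{i-1})\!\!\prod_{\substack{k<i\leq N\\ w_i=\downarrow}}\!\!\Psi^{(2)}_{\alpha_i}(\lambda^{i-1}/\lambda^i)\;=\;\prod_{i=k+1}^N\!\!\Gamma(\alpha_i)^{2}\cdot H_{k,N}(\lambda^k_1-\lambda^k_2)\,\mathcal Z^{\boldsymbol\alpha,u,v}_{\rm LG}(N),
\]
that is, this partial integral must be independent of the choice of arrows $w_{k+1},\dots,w_N$, and equal to its value on the horizontal word, which by definition of $Q_{N-k}$ gives the right-hand side.

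To establish this path-independence, I will use the Whittaker Cauchy identity \eqref{eq:skewCauchyWhittaker} and its ``one-sided'' Littlewood counterpart (a skew version of \eqref{eq:LittlewoodWhittaker}): for $\mu,\nu\in \mathbb R^{2}$ and admissible parameters,
\[
\int_{\mathbb R^2} d\lambda\,\Psi^{(2)}_a(\lambda/\mu)\Psi^{(2)}_b(\lambda/\nu)
=\Gamma(a+b)\!\int_{\mathbb R^2} d\kappa\,\Psi^{(2)}_b(\mu/\kappa)\Psi^{(2)}_a(\nu/\kappa),
\]
\[
\int_{\mathbb R^2} d\lambda\,e^{-v(\lambda_1-\lambda_2)}\Psi^{(2)}_a(\lambda/\mu)
=\Gamma(a+v)\int_{\mathbb R^2} d\kappa\,e^{-v(\kappa_1-\kappa_2)}\Psi^{(2)}_a(\mu/\kappa).
\]
Both identities appear implicitly in the proof of \cite[Proposition 3.12]{barraquand2023stationary} (which shows $\mathcal Z^{\boldsymbol\alpha,u,v}_{\rm LG}(N)$ is $w$-independent) and can also be obtained from \eqref{eq:skewCauchyWhittaker} and \eqref{eq:LittlewoodWhittaker} by expanding $\Psi^{(2)}_{a,b}$ via the branching rule and taking scalar products against Whittaker functions as in the proof of Lemma \ref{lem:integralWhittaker}. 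Applied locally to consecutive factors, the first identity exchanges an $\rightarrow\downarrow$ pair of equal lower indices for a $\downarrow\rightarrow$ pair (up to the scalar $\Gamma(a+b)$, which is absorbed in the normalization), and the second one takes care of a terminal $\downarrow$ at the right boundary. By successive local swaps any word $w$ can be transformed into the all-$\rightarrow$ word, giving the claimed equality.

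Having proved the marginal identity, the process $x\mapsto\lambda^x$ is a Markov chain with the stated transition kernel and initial law: the Markov property follows since for any $0\leq k\leq N$ the conditional density of $(\lambda^{k+1},\dots,\lambda^N)$ given $\lambda^k$ depends on the past only through $\lambda^k$. The one-step consistency, i.e.\ that $\mathsf P_x^{w_x}$ integrates to $1$ in $\mu$ for every fixed $\lambda$, is exactly the special case of the partial integral identity above taken over a single step and follows from the same Cauchy/Littlewood pair of identities applied to reduce the product appearing in the definition of $H_{x-1,N}$ to that of $H_{x,N}$ after one step.

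The main technical hurdle will be a careful bookkeeping of the scalar prefactors $\Gamma(\alpha_i)^{2}$ and $\Gamma(\cdot)$ generated by each application of the Cauchy/Littlewood identities and of the convergence conditions (needed under the assumption $u+v>0$, $\alpha_i+u>0$, $\alpha_i+v>0$) that guarantee that all the intermediate integrals are absolutely convergent and the order of integration may be exchanged via Fubini; these are of the same nature as the justifications carried out in Lemmas \ref{lem:deltaLitllewood} and \ref{lem:deltaCauchy}, and no essentially new estimate should be required.
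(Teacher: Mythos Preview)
Your proposal is correct and takes exactly the same route as the paper's proof, which reads in full: ``same as Proposition~\ref{prop:Markoviangeneral}, but using the Whittaker skew Cauchy and Littlewood identities \cite[Lemma~3.5 and~3.6]{barraquand2023stationary} in place of \eqref{eq:skewCauchysignatures}--\eqref{eq:skewLittlewoodsignatures}.'' One small correction: the Whittaker skew identities you need (the ones in \cite[Lemma~3.5 and~3.6]{barraquand2023stationary}) carry \emph{no} scalar prefactors $\Gamma(a+b)$ or $\Gamma(a+v)$ --- they are exact equalities between the two integrals, just like their Schur counterparts over $\Sign_2$ --- which is precisely what makes $\mathcal Z^{\boldsymbol\alpha,u,v}_{\rm LG}(N)$ path-independent; so the prefactor bookkeeping you flag as a hurdle is in fact trivial.
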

\begin{proof}
	The proof is the same as that of Proposition \ref{prop:Markoviangeneral} except that one now uses the Cauchy and Littlewood identities for skew Whittaker functions, i.e. \cite[Lemma 3.5 and 3.6]{barraquand2023stationary}, instead of \eqref{eq:skewCauchysignatures} and \eqref{eq:skewLittlewoodsignatures}.
\end{proof}

\subsubsection{Application}
Let us first  establish an integral formula for the function $Q_M$ arising in the definition of the function $H_{x,N}$. 
\begin{proposition} 
For $u,v>0$, and any $\ell\in \R$,  we have
\begin{equation}
	Q_M(\ell) =\int_{\I\R} \frac{dz}{2\I\pi} \Psi_z(\ell,0)  \frac{\Gamma(v\pm z)}{2\Gamma(\pm 2 z)}\prod_{i=N-M+1}^N \frac{\Gamma(\alpha_i\pm z)}{\Gamma(\alpha_i)^2}.
	\label{eq:expressionQWhittaker}
\end{equation}
When $u<0$ or $v<0$, $Q_M(\ell)$ is given by the analytic continuation in $u$ or $v$ of the formula above. 
\label{prop:formulaQ}
\end{proposition}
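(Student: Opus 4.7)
The proof will closely parallel Proposition \ref{prop:formulaqgeneral} in the Schur case, with Lemma \ref{lem:integralWhittaker} playing the role of the integral representation for skew Schur functions, and with Lemma \ref{lem:deltaLitllewood} simultaneously playing the roles of the Littlewood identity \emph{and} the single residue extraction that appeared in the Schur argument. The target formula \eqref{eq:expressionQWhittaker} employs the convention, already visible in Theorem \ref{theo:formulaloggammaintro}, that $\Psi_z(\ell,0)$ denotes the two-variable Whittaker function $\Psi_{(z,-z)}(\ell,0)$, and the proof will produce exactly this specialization.

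Assume first that $v>0$ and $M\geq 2$ (so that the hypothesis $k>n-1$ of Lemma \ref{lem:integralWhittaker} holds with $n=2, k=M$). Starting from the definition of $Q_M(\ell)$, I would apply Lemma \ref{lem:integralWhittaker} to rewrite the skew Whittaker function $\Psi^{(2)}_{\alpha_{N-M+1},\dots,\alpha_N}(\lambda/(\ell,0))$ as a double contour integral over $(z_1,z_2)\in (\eta+\I\R)^2$ involving the factor $\Psi_{(z_1,z_2)}(\ell,0)\,\overline{\Psi_{(z_1,z_2)}(\lambda)}$, the measure $\Delta(z)$ and the product of Gamma functions $\prod_i \Gamma(\alpha_i+z_j)$. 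Using $\overline{\Psi_z(\lambda)}=\Psi_{-z}(\lambda)$ on the contour, one checks that the integrand is absolutely integrable in $(z,\lambda)$ thanks to the exponential decay of Gamma factors along vertical lines combined with the (doubly) exponential decay of Whittaker functions, so Fubini applies and the $\lambda$-integration can be done first. After substituting $z\mapsto -z$ in the outer integral (the contour $\I\R$ being symmetric), the inner integral becomes $\int_{\R^2}\Psi_z(\lambda)e^{-v(\lambda_1-\lambda_2)}d\lambda$, which is precisely the distributional identity of Lemma \ref{lem:deltaLitllewood}: it collapses one of the two contour integrations by imposing $z_2=-z_1$ and contributes the factor $\Gamma(v\pm z_1)$. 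Recognizing $\Delta(z,-z)=\tfrac{1}{2\Gamma(2z)\Gamma(-2z)}$ and invoking the symmetry $\Psi_{(-z,z)}=\Psi_{(z,-z)}$, one reads off exactly the right-hand side of \eqref{eq:expressionQWhittaker}.

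Two points remain. First, the boundary case $M=1$ falls outside Lemma \ref{lem:integralWhittaker}; it can be handled either by a direct computation using Givental's representation \eqref{eq:defWhittakerGivental} applied to $\Psi_{\alpha_N}^{(2)}(\lambda/(\ell,0))$ together with the explicit Bessel form \eqref{eq:WhittakerBessel} of $\Psi_{z,-z}$, or by a limiting argument from $M=2$ in the spirit of the end of the proof of Theorem \ref{th:formulaWhittaker} (sending one of the parameters $\alpha_i$ to $+\infty$). Second, the analytic continuation for $u<0$ or $v<0$: the integral in \eqref{eq:expressionQWhittaker} is manifestly analytic in $v$ on the strip where no pole of $\Gamma(v\pm z)$ meets the contour $\I\R$, and for other ranges of $v$ (with $v+\alpha_i>0$) it is extended by shifting $\I\R$ past the relevant poles of $\Gamma(v+z)$, picking up residue contributions; the original integral definition of $Q_M$ is also analytic in $v$ on $\{v>-\min\alpha_i\}$ by the same type of argument used in Section \ref{sec:proofWhittaker}, so the identity propagates by uniqueness of analytic continuation. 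The main technical point I expect to require care is the justification of Fubini and of the $z\mapsto-z$ substitution (tracking signs, orientation, and the uniform decay of the integrand), rather than any conceptual difficulty.
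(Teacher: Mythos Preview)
Your proposal is correct and follows essentially the same approach the paper sketches: the paper says the argument is parallel to Proposition \ref{prop:formulaqgeneral} with calculations as in the proof of Theorem \ref{th:formulaWhittaker}, and you have correctly identified that this means using Lemma \ref{lem:integralWhittaker} for the integral representation of the skew Whittaker function and Lemma \ref{lem:deltaLitllewood} to collapse the $\lambda$-integration into the constraint $z_2=-z_1$. Your handling of the $M=1$ case via the limiting argument (sending an auxiliary $\alpha_i\to+\infty$) is exactly the device used at the end of the proof of Theorem \ref{th:formulaWhittaker}, and your treatment of the analytic continuation in $v$ is in line with Section \ref{sec:proofWhittaker}; note incidentally that $Q_M(\ell)$ does not actually depend on $u$, so only the continuation in $v$ is substantive.
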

\begin{proof}
We omit the proof as the argument is parallel to that of Proposition \ref{prop:formulaqgeneral} and the calculations are very similar as in the proof of Theorem \ref{th:formulaWhittaker}. 
\end{proof}

One can now study limits as $N$ goes to infinity. 
\begin{proposition}
	Assume $\alpha_i\equiv \alpha>0$ and $v>0$. As $M\to \infty$, 
	\begin{equation*}
			Q_M(\ell) \sim_{M\to\infty} \frac{\Gamma(v)^2}{2\sqrt{\pi} \left( \psi_1(\alpha) M\right)^{3/2} } \Psi_{(0,0)}(\ell,0)
		\end{equation*}
	where $\psi_1(z) = \partial_z^2\log\Gamma(z)$ is the trigamma function. 
	Further, as $N\to\infty$,
	\begin{equation}
			H_{x,N}(\ell)\xrightarrow[N\to\infty]{} \frac{\Psi_{(0,0)}(\ell,0)}{\Gamma(u)^2}.
		\end{equation}	
	\label{prop:limitQ}
\end{proposition}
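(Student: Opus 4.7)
The strategy is a Laplace-type asymptotic analysis applied to the integral formula of Proposition \ref{prop:formulaQ}, specialized to $\alpha_i\equiv\alpha$:
\begin{equation*}
	Q_M(\ell) = \int_{\I\R}\frac{dz}{2\I\pi}\,\Psi_z(\ell,0)\,\frac{\Gamma(v\pm z)}{2\Gamma(\pm 2z)}\left(\frac{\Gamma(\alpha\pm z)}{\Gamma(\alpha)^2}\right)^M.
\end{equation*}
The function $F(z):=\log\Gamma(\alpha+z)+\log\Gamma(\alpha-z)-2\log\Gamma(\alpha)$ is real, even, and vanishes at $z=0$; its Taylor expansion reads $F(z)=\psi_1(\alpha)z^2+O(z^4)$. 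On the imaginary axis, $z=\I y$ gives $F(\I y)=-\psi_1(\alpha) y^2+O(y^4)$, so the $M$th power concentrates near $z=0$ on a scale $|y|\sim M^{-1/2}$.

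The first step is the change of variables $z=\I y/\sqrt{M}$. In this scaling, $\Psi_z(\ell,0)\to \Psi_{(0,0)}(\ell,0)$ and $\Gamma(v\pm z)\to \Gamma(v)^2$ pointwise in $y$. The key small-$z$ expansion for the denominator factor is obtained from $\Gamma(2z)\Gamma(-2z)=-\pi/\bigl(2z\sin(2\pi z)\bigr)$, which gives $1/\bigl(2\Gamma(2z)\Gamma(-2z)\bigr)=-2z^2+O(z^4)$. After substitution, this factor becomes $2y^2/M$. Collecting the Jacobian $dz=\I\,dy/\sqrt M$, all powers of $M$ combine to $M^{-3/2}$ and one formally obtains
\begin{equation*}
	Q_M(\ell)\,\sim\,\Psi_{(0,0)}(\ell,0)\,\Gamma(v)^2\,\frac{1}{\pi M^{3/2}}\int_{\R}y^2 e^{-\psi_1(\alpha)y^2}\,dy
	=\frac{\Gamma(v)^2\,\Psi_{(0,0)}(\ell,0)}{2\sqrt\pi\,(\psi_1(\alpha)M)^{3/2}},
\end{equation*}
using $\int_\R y^2 e^{-a y^2}dy=\sqrt\pi/(2a^{3/2})$.

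The principal obstacle is the rigorous justification of the Laplace asymptotics: we must show that the contribution of $|y|>\delta\sqrt M$ is negligible uniformly in $\ell$. For this one uses the bound $|\Gamma(\alpha+\I y)|\leq \Gamma(\alpha)$ with strict inequality and exponential decay $|\Gamma(\alpha+\I y)|\sim C|y|^{\alpha-1/2}e^{-\pi|y|/2}$ as $|y|\to\infty$, so the factor $\left(\Gamma(\alpha\pm z)/\Gamma(\alpha)^2\right)^M$ provides an exponentially small bound outside a neighborhood of $0$. Combined with the uniform bound $|\Psi_z(\ell,0)|\leq \Psi_{(0,0)}(\ell,0)$ for $z\in\I\R$ (clear from the Givental representation \eqref{eq:defWhittakerGivental}) and the rapid decay of $\Gamma(v\pm z)$, dominated convergence applies to the rescaled integral and yields the claimed asymptotic.

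For the second statement, by \eqref{eq:defH} we have $H_{x,N}(\ell)=Q_{N-x}(\ell)/\int_\R e^{-u\ell'}Q_N(\ell')\,d\ell'$. Plugging in the asymptotic just established (and dominated convergence to pass the integration inside the $N\to\infty$ limit in the denominator) gives
\begin{equation*}
	H_{x,N}(\ell)\,\longrightarrow\,\frac{\Psi_{(0,0)}(\ell,0)}{\int_\R e^{-u\ell'}\Psi_{(0,0)}(\ell',0)\,d\ell'},
\end{equation*}
since the $N$-dependent prefactors $(N-x)^{-3/2}$ and $N^{-3/2}$ cancel in the ratio. It remains to identify the denominator with $\Gamma(u)^2$. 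Using the two-variable identity \eqref{eq:WhittakerBessel}, one has $\Psi_{(0,0)}(\ell,0)=2K_0\bigl(2e^{-\ell/2}\bigr)$, and the substitution $x=2e^{-\ell/2}$ reduces the integral to the Mellin transform $\int_0^\infty x^{2u-1}K_0(x)\,dx=2^{2u-2}\Gamma(u)^2$, giving exactly $\Gamma(u)^2$. This yields the claimed limit for $H_{x,N}(\ell)$.
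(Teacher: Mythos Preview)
Your proof is correct and follows the same approach as the paper: the change of variables $z=\I y/\sqrt{M}$ in the integral formula of Proposition \ref{prop:formulaQ}, followed by a Laplace-type localization near $z=0$, yielding the Gaussian integral $\int_\R y^2 e^{-\psi_1(\alpha)y^2}\,dy$. You supply more detail than the paper does---in particular the dominated convergence justification via $|\Psi_{(z,-z)}(\ell,0)|\leq \Psi_{(0,0)}(\ell,0)$ and the explicit evaluation of the denominator through the Mellin transform of $K_0$---but the core argument is identical.
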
 
\begin{proof}
	In \eqref{eq:expressionQWhittaker}, one may perform the change of variables $z=i x/\sqrt{M}$, so that as $M$ going to infinity, 
	$$ Q_M(\ell) \sim_{M\to\infty}  \frac{\Psi_{(0,0)}(\ell,0) \Gamma(v)^2 }{\pi M^{3/2}} \int_{\mathbb R} x^2 e^{-\psi_1(\alpha)x^2}dx,$$
which leads to the expressions given above.
\end{proof}
The above proposition shows that as $N$ goes to infinity, the process $x\mapsto \la^x$ (defined by \eqref{eq:deftwolayerWhittaker}, in the case $\alpha_i\equiv a$, $u,v>0$) weakly converges to the Markov process with transition probability 
\begin{equation}
\overline{	\mathsf P}(\lambda, \mu) = \frac{H(\mu_1-\mu_2)}{H(\la_1-\la_2)} \frac{1}{\Gamma(\alpha)^2}\Psi_{\alpha}(\mu/\la)
	\label{eq:limitDoobtransformLG}
\end{equation}
where 
\begin{equation} 
	H(\ell) = \frac{2K_0(2e^{-\ell/2})}{\Gamma(u)^2}, 
	\label{eq:Htransform}
\end{equation}
and initial distribution 
$$ \overline{\mathsf P}_0(\la^0) = \frac{1}{\Gamma(u)^2}  e^{-u(\la_1^0-\la_2^0)}\Psi_{(0,0)}(\la_1^0-\la_2^0, 0) \mathds{\delta}_0(\lambda_2^0).$$
In the expression \eqref{eq:Htransform}, we have used \eqref{eq:WhittakerBessel} to express $\Psi_{0,0}(\ell,0)$ in terms of the Bessel K function.

 Furthermore, the factor $ \frac{1}{\Gamma(\alpha)^2}\Psi_{\alpha}(\mu/\la)$ in  \eqref{eq:limitDoobtransform} can be interpreted as the transition kernel for a couple of independent random walks $(\la_1^x, \la_2^x) $ with increments distributed as logarithms of $\mathrm{Gamma}^{-1}(\alpha)$ random variables, killed at time $x$ with probability  
 $$ \exp(-e^{-(\la_2^x-\la_1^{x-1})}).$$
  Then, $\overline{\mathsf P}(\la^{x},\la^{x+1})$ can be interpreted as the transition  probability of the same killed random walk conditioned to survive forever.  
\begin{remark}
	By performing analytic continuations, one could study the cases where $c_2>1$ or $c_1>1$ which should lead to a different behaviour following the same phase diagram as in \cite{barraquand2023stationaryloggamma}. This remark also applies to Section \ref{sec:MarkovianSchur}.
\end{remark}

\section{Application to the open KPZ equation} 
\label{sec:KPZ}
The open KPZ equation on the segment $[0,L]$, with boundary parameters $u,v\in \R$ is the stochastic PDE
\begin{equation}
	\begin{cases}
		\partial_t h(t,x) = \tfrac 1 2 \partial_{xx} h(t,x) + \tfrac 1 2 \left( \partial_x h(t,x) \right)^2 +\xi(t,x),\\
		\partial_x h(t,x)\big\vert_{x=0}=u,\\
		\partial_x h(t,x)\big\vert_{x=L}=-v,
	\end{cases}
	\tag{$\mathrm{KPZ}_{u,v}$}
	\label{eq:KPZ}
\end{equation}
where $\xi$ is a space-time white noise. 
As for the KPZ equation on $\R$, solutions can be defined through the Hopf-Cole transform. We say that $h\in C(\mathbb R_{\geq 0}\times [0,L],\R)$ is a solution to \eqref{eq:KPZ} with initial condition $h_0$ if we have $h(t,x)=\log Z(t,x)$ where $Z(t,x)$ solves the multiplicative noise stochastic heat equation 
\begin{equation}
	\begin{cases}
		\partial_t Z(t,x) = \tfrac 1 2 \partial_{xx} Z(t,x) +  Z(t,x)\xi(t,x),\\
		\partial_x Z(t,x)\big\vert_{x=0}=(u-1/2)Z(t,0),\\
		\partial_x Z(t,x)\big\vert_{x=L}=-(v-1/2)Z(t,L),
	\end{cases}
	\tag{$\mathrm{SHE}_{u,v}$}
	\label{eq:SHE}
\end{equation}
where we interpret the product $Z(t,x)\xi(t,x)$ in the Ito sense, with initial condition $Z(0,x)=\log h_0(x)$. 
We refer to  \cite{corwin2016open, parekh2017kpz} for details about what this equation means and for proofs of the existence, uniqueness and positivity of solutions. 

\subsection{High temperature limit} The open KPZ equation is the scaling limit of the weakly asymmetric open ASEP \cite{corwin2016open}, and is expected to be the limit of many other particle systems in a  weak asymmetric scaling. It is well-known that the KPZ equation on $\R$, without boundary,  also arises as the high temperature limit of the free energy of directed polymer models in $\mathbb Z^2$ \cite{alberts2014intermediate}. This is true as well for the KPZ equation on $\mathbb R_+$ and directed polymers confined to a half-quadrant \cite{wu2018intermediate} (see also \cite{parekh2019positive, barraquand2023stationaryloggamma}). Hence, it is natural to expect, though it has not been proved yet,  that the open KPZ equation arises  as the high temperature limit of directed polymers in the strip. 
In particular, a precise convergence statement was proposed   in \cite{barraquand2023stationary} for the free energy of the log-gamma polymer: fix some $L>0$ and scale 
\begin{equation}
	\alpha=\tfrac 1 2 +  \eps^{-1}, \;\; N=\eps^{-1}L, \;\; h^{(\eps)}(t,x) = (\eps^{-2}t+\eps^{-1} x)\log(1/\eps) + H\left(\frac{\eps^{-2}t}{2}+ \eps^{-1}x,\frac{\eps^{-2}t}{2}  \right). 
	\label{eq:scalings}
\end{equation}
Then we expect that for all $t>0$ and parameters $u,v\in \mathbb R$, as $\eps\to 0$, we have the convergence 
\begin{equation}
	h^{(\eps)}(t,x) \Rightarrow h(t,x)
	\label{eq:conjecture}
\end{equation} 
in the space $C([0,L],\R)$. We refer to \cite[Conjecture 4.2]{barraquand2023stationary} for more details about this conjecture and the specific form of the scalings \eqref{eq:scalings}.  

\subsection{Stationary measures for the open KPZ equation}
Under the same scalings as \eqref{eq:scalings}, it was shown in \cite{barraquand2023stationary} that the probability distribution $\PP^{\alpha,u,v}_{\rm LG}$ from Definition \ref{def:stationaryloggamma} on couples of random walks converges to a probability distribution on continuous functions $B_1,B_2$ on $[0,L]$ defined by 
\begin{equation} \PP^{u,v}_{\rm KPZ}(B_1,B_2) = \frac{1}{\mathcal K_{u,v}(L)} e^{(u+v)B_1 \otimes B_2(L)} \,\PP_{\rm Brown}^{-v}(B_1)\,\PP_{\rm Brown}^{-u}(B_2) 
	\label{eq:KPZstat1}
\end{equation}
where the reference measure $\PP_{\rm Brown}^{d}$ is the law of a Brownian motion with drift $d$ starting from $0$, and the composition $B_1\otimes B_2$ is the geometric Pitman transform defined as 
$$ B_1\otimes B_2 \,(t)  =   - \log \int_0^t e^{-(B_1(s)+B_2(t)-B_2(s))}ds,$$
The normalization function satisfies 
$$ \mathcal K_{u,v}(L)  = \mathbb E\left[ \left(\int_0^L e^{-(B_1(s)+B_2(L)-B_2(s))}ds \right)^{-u-v}\right],$$
where the expectation is taken over the law of independent standard Brownian motions $B_1,B_2$. Modulo \eqref{eq:conjecture}, this proves the conjecture from \cite{barraquand2021steady} that the law of $B_1$ under $ \PP^{u,v}_{\rm KPZ}$ is the stationary measure for the open KPZ equation.  

When $u+v\geq 0$, the result was already proven through a combination of results from \cite{corwin2021stationary,  bryc2021markov}. Indeed, in the  special case $u+v>0$, it was shown in \cite{barraquand2021steady} that the law of $(B_1(x), B_2(x))$ is the same as the law of the processes  $(\Lambda_1(x)-\Lambda_1(0), \Lambda_2(x)-\Lambda_2(0))$
where $\Lambda_1, \Lambda_2$ are continuous stochastic processes on $[0,L]$ distributed as 
\begin{equation}
	\mathbb P_{\rm KPZ}^{u,v} (\Lambda_1, \Lambda_2) =    \frac{1}{\mathcal Z_{u,v}(L)}  F_{u,v}(\Lambda_1, \Lambda_2) \delta_0(\Lambda_2(0))\,\, \mathbb P^{\rm free}_{\rm Brown}(\Lambda_1)\mathbb P^{\rm free}_{\rm Brown}(\Lambda_2) 
	\label{eq:KPZtwolayer}
\end{equation}
where the reference measure $\mathbb P^{\rm free}_{\rm Brown}$ is the Brownian measure with free endpoints (though the presence of the  factor $\delta_0(\Lambda_2(0))$ will fix $\Lambda_2(0)=0$), and the functional $F_{u,v}$ is 
$$F_{u,v}(\Lambda_1, \Lambda_2) = \exp\left(  -\int_0^L  e^{-(\Lambda_1(s)-\Lambda_2(s))}d s \right) e^{-u(\Lambda_1(0)-\Lambda_2(0))}e^{-v  (\Lambda_1(L)-\Lambda_2(L))}.$$
The normalization constants in \eqref{eq:KPZtwolayer} and in \eqref{eq:KPZstat1} are related by 
\begin{equation}
	\mathcal Z_{u,v}(L) = \Gamma(u+v)e^{\frac{-L}{2}(u^2+v^2)}  \mathcal K_{u,v}(L).
	\label{eq:normalizationKPZ}
\end{equation}
Indeed, to see that $(\Lambda_1(\cdot)-\Lambda_1(0), \Lambda_2(\cdot)-\Lambda_2(0))$ under \eqref{eq:KPZstat1} has the same law as $(B_1,B_2)$ under \eqref{eq:KPZtwolayer}, we first integrate over $\Lambda_1(0)-\Lambda_2(0)$, and then we arrange the formula using Girsanov theorem to change the drifts of the Brownian motions. The first averaging yields the factor $\Gamma(u+v)$, and the application of Girsanov's theorem yields the exponential factors in \eqref{eq:normalizationKPZ}. 

\medskip 

We insist that  \eqref{eq:KPZtwolayer} defines a probability measure only for $u+v>0$. It turns out that this probability measure can be related \cite{bryc2021markov, barraquand2021steady} to formulas characterizing the open KPZ stationary measures \cite{corwin2021stationary}, so that the law of $B_1$ (or $\Lambda_1(x)-\Lambda_1(0)$), is the stationary measure of the open KPZ equation. It is known to be unique \cite{knizel2022strong, parekh2022ergodicity}.

\medskip 
After noticing that the marginal distribution of $\Lambda_1(x)-\Lambda_1(0)$ in  \eqref{eq:KPZtwolayer} can be described by $B_1(x)$ in \eqref{eq:KPZstat1}, \cite{barraquand2021steady} conjectured that the law of $B_1$ is actually the stationary measure for all $u,v$. This conjecture was motivated by an analytic continuation argument which seems difficult to formalize in the continuous setting. The framework of \cite{barraquand2023stationary} allows to justify rigorously this analytic continuation procedure by going through a discretized variant of the KPZ equation, namely the log-gamma polymer (though this approach requires the convergence of one model to the other, which has not been established yet for the log-gamma polymer in a strip a strip). 
The idea consists in building discretized versions of the measure \eqref{eq:KPZtwolayer} (the two-layer Schur and Whittaker processes from Definition \ref{def:generaltwolayerSchur} and Definition \ref{def:generaltwolayerWhittaker}) and prove that: 
\begin{enumerate}
	\item These measures are stationary for certain Markov dynamics on sequences of signatures.
	\item Under these Markov dynamics,  the top layer, i.e. the process $x\mapsto \lambda_1^x$, evolves according to the stochastic recursion satisfied by the LPP time $G(n,m)$ or log-gamma polymer free energies $H(n,m)$.
	\item Analytic continuations in boundary parameters can be performed at the discrete level. 
\end{enumerate}

\subsection{Computation of the growth rate of the stationary open KPZ equation}
We now explain how the results from the present paper can be applied to the open KPZ equation \eqref{eq:KPZ}. 

Let, for $u,v>0$, 
\begin{equation}
	c_{u,v}(L) = 
	\frac{-1}{24} + \frac{1}{2} \partial_L \log \mathcal Z_{u,v}(L) 
	\label{eq:defcuv}
\end{equation}
where $\mathcal Z_{u,v}(L)$ -- the same renormalization constant as above in \eqref{eq:normalizationKPZ} -- can be computed explicitly  as 
\begin{equation}
	\mathcal Z_{u,v}(L) = \int_{\I\R}\frac{dz}{2\I\pi} \left\vert \frac{\Gamma(u+ z)\Gamma(v+ z) }{\Gamma(2z)}\right\vert^2 \frac{e^{z^2 L}}{2}.
	\label{eq:normalizationexplicit}
\end{equation} 
For other values of $u,v$, we define $c_{u,v}(L)$ by analytic continuation in the variables $u$ or $v$ from the formula in the  case $u,v>0$ (as we will explain, such analytic continuations are easy to perform taking residues in the integral formula \eqref{eq:normalizationexplicit}). The fact that the normalization constant in \eqref{eq:KPZtwolayer} admits the explicit expression \eqref{eq:normalizationexplicit} was already known \cite{bryc2021markov}, and the same expression already arises in \cite{corwin2021stationary}.

Let $h(t,x)$ be the solution to \eqref{eq:KPZ} starting from stationary initial data at $t=0$. Then, assuming that  the unproven convergence \eqref{eq:conjecture} holds in expectation,  the next Theorem shows that for any time $t\geq 0$, and for the solution $h(t,x)$ to  \eqref{eq:KPZ} started from its stationary measure, we have 
\begin{equation}
	\mathbb E\left[h(t,0) \right] = t\; c_{u,v}(L). 
\end{equation}
\begin{theorem}
	Assume that  $n$ is a multiple of $N$, and $u,v>0$. Consider the free energy of the log-gamma polymer (Definition \ref{def:loggamma}) starting from stationary initial data (Definition \ref{def:stationaryloggamma}). Then, 
	\begin{equation*}  
		\mathbb E\left[ H(n,n) \right] =  n\,\times \frac{-1 }{\mathcal Z_{\rm LG}^{\alpha,u,v}(N)} \int_{\I\R}\frac{dz}{2\I\pi} \frac{\Gamma(u\pm z)\Gamma(v\pm z)\left(\Gamma(\alpha \pm z)\right)^N}{2\Gamma(2z)\Gamma(-2z)} (\psi(\alpha+z)+\psi(\alpha-z)),
	\end{equation*}
	where $\psi(z)=\partial_z\log\Gamma(z)$ is the digamma function. Moreover, under the scalings \eqref{eq:scalings}, we have that for any $u,v\in \R$, 
	$$ \lim_{\eps\to 0} \mathbb E[h^{(\eps)}(t,0) ] = t \; c_{u,v}(L).$$
	\label{th:constantcuv}
\end{theorem}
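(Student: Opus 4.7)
The plan has three parts: establishing linearity of $\mathbb E[H(n,n)]$ in $n$ via stationarity, identifying the linear coefficient with the claimed integral through the two-layer representation, and finally deriving the KPZ scaling limit.

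\textbf{Linearity.} Since $(m-1,m)\notin\Zstrip$, the log-gamma recursion at the left-boundary corner simplifies to $e^{H(m,m)}=\omega_{m,m}\,e^{H(m,m-1)}$, giving
\begin{equation*}
H(m,m)-H(m-1,m-1) = H_{m-1}(1)+\log\omega_{m,m}.
\end{equation*}
Stationarity yields $H_{m-1}\stackrel{(d)}{=}H_0$, and $\omega_{m,m}\sim\Gammainv(\alpha+u)$ is independent of $H_{m-1}$ with $\mathbb E[\log\omega_{m,m}]=-\psi(\alpha+u)$. Taking expectations and telescoping,
\begin{equation*}
\mathbb E[H(n,n)] = nc,\qquad c := \mathbb E[\mathbf L_1(1)]-\psi(\alpha+u),
\end{equation*}
for every $n\geq 0$; the assumption $N\mid n$ is not needed at this step.

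\textbf{Identification.} The right-hand side of the theorem equals $-\tfrac{n}{N}\partial_\alpha\log\mathcal Z^{\alpha,u,v}_{\rm LG}(N)$: differentiating \eqref{eq:normalizationloggammaintro} under the integral, the factor $(\Gamma(\alpha\pm z))^N$ contributes exactly $N(\psi(\alpha+z)+\psi(\alpha-z))$. On the other hand, differentiating the two-layer density \eqref{eq:deftwolayerWhittaker} and using $\partial_\alpha \log \Psi^{(2)}_\alpha(\lambda^i/\lambda^{i-1}) = -(\lambda_1^i+\lambda_2^i-\lambda_1^{i-1}-\lambda_2^{i-1})$, a telescoping sum gives
\begin{equation*}
\partial_\alpha\log\mathcal Z^{\alpha,u,v}_{\rm LG}(N) = -\mathbb E[\mathbf L_1(N)+\mathbf L_2(N)].
\end{equation*}
The claim thus reduces to the key identity
\begin{equation*}
\mathbb E[H(N,N)] = \mathbb E[\mathbf L_1(N)+\mathbf L_2(N)].
\end{equation*}
I would verify this either by applying the stationarity/corner argument symmetrically to obtain right-boundary recursions for $\mathbf L_1$ (yielding $\mathbb E[\mathbf L_1(N)-\mathbf L_1(N-1)]=-\psi(\alpha+v)$) together with its analogue for $\mathbf L_2$, or more directly by applying Theorem \ref{th:formulaWhittaker} with $k=2$, $x_1=1$, $x_2=N$ and computing the derivative at $t_1=0$ via the same residue collapse that concludes the proof of Theorem \ref{theo:formulaSchur}. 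As a sanity check the identity holds trivially for $N=1$, since then both sides equal $-\psi(\alpha+u)-\psi(\alpha+v)$.

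\textbf{KPZ scaling.} For the second statement, substitute $\alpha=\tfrac12+\eps^{-1}$, $N=L\eps^{-1}$. Stirling gives $\psi(\alpha)=\log(1/\eps)+\tfrac{\eps^2}{24}+O(\eps^3)$ and $\psi''(\alpha)=-\eps^2+O(\eps^3)$, hence
\begin{equation*}
\psi(\alpha+z)+\psi(\alpha-z) = 2\log(1/\eps)+\tfrac{\eps^2}{12}-\eps^2 z^2+O(\eps^3),
\end{equation*}
while $(\Gamma(\alpha\pm z))^N/\Gamma(\alpha)^{2N}\to e^{Lz^2}$ uniformly on compacts of $\I\R$ because $N\psi'(\alpha)\to L$. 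Thus $\mathcal Z^{\alpha,u,v}_{\rm LG}(N)/\Gamma(\alpha)^{2N}\to\mathcal Z_{u,v}(L)$ and
\begin{equation*}
c = -2\log(1/\eps)-\tfrac{\eps^2}{12}+\eps^2\,\partial_L\log\mathcal Z_{u,v}(L)+O(\eps^3),
\end{equation*}
using $\partial_L\log\mathcal Z_{u,v}(L)=\langle z^2\rangle$ under the limit integrand. Plugging $n=\eps^{-2}t/2$ into \eqref{eq:scalings}, the $\log(1/\eps)$ divergences cancel and
\begin{equation*}
\mathbb E[h^{(\eps)}(t,0)] = \tfrac{t}{2}\bigl(-\tfrac{1}{12}+\partial_L\log\mathcal Z_{u,v}(L)\bigr)+O(\eps) = t\,c_{u,v}(L)+O(\eps),
\end{equation*}
as claimed. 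For $u<0$ or $v<0$ the argument applies to the analytic continuation from Theorem \ref{theo:formulaloggammaintro}. The core technical obstacle is the Burke-type identity $\mathbb E[H(N,N)]=\mathbb E[\mathbf L_1(N)+\mathbf L_2(N)]$; once this is established, both the finite-$N$ formula and the KPZ asymptotics follow from the Stirling bookkeeping above.
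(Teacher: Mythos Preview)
Your approach is the alternative route that the paper itself flags in its remark at the end of Section~\ref{sec:proofcuv}, and it differs from the paper's actual proof in an essential way.

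The paper decomposes along the staircase path: $\mathbb E[H(N,N)]=\mathbb E[H(N,0)-H(0,0)]+\mathbb E[H(N,N)-H(N,0)]$. The first term is $\mathbb E_{u,v}[\mathbf L_1(N)]$; by reflecting the strip across the diagonal (which swaps the two boundaries) the second, vertical term equals $\mathbb E_{v,u}[\mathbf L_1(N)]$. Each is obtained by differentiating the one-point Laplace transform \eqref{eq:Laplacetransformspecialcaseintro} at $t=0$, producing terms in $\psi(u\pm z)$, $\psi(v\pm z)$ and $\psi(\alpha\pm z)$. When the two expectations are added the boundary digamma terms cancel pairwise and only the $\alpha$-part survives. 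No statement about $\mathbf L_2$ is ever needed.

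Your route instead reduces to the Burke-type identity $\mathbb E[H(N,N)]=\mathbb E[\mathbf L_1(N)+\mathbf L_2(N)]$, equivalently $\mathbb E[\mathbf L_2(1)]=-\psi(\alpha+u)$, which you do not prove. Route~(a) as stated does not close the gap: the stationary increment $\mathbf L_1(N)-\mathbf L_1(N-1)$ is \emph{not} a log-inverse-Gamma variable for general $N$ (the two-layer measure has nontrivial correlations), and nothing you wrote connects $\mathbf L_2$ to a vertical polymer increment. Route~(b) is underspecified. The honest way to get the identity is the one in the paper's remark: interpret $\mathbf L_2(1)$ as having the law of the left-corner vertical increment $H(k,k)-H(k,k-1)=\log\omega_{k,k}$, which follows from the two-layer dynamics in \cite{barraquand2023stationary}. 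Once that is in hand your argument is complete, and it has the advantage of not requiring $N\mid n$. Your KPZ-scaling bookkeeping and the handling of $u,v\le 0$ by analytic continuation match the paper's.
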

Theorem \ref{th:constantcuv} is proved below in in Section \ref{sec:proofcuv}. 
\begin{remark}
	On the line $u+v=0$, the open KPZ equation stationary measure is a Brownian motion with drift $u$  and, after appropriate analytic continuation, one finds that 
	$$ c_{u,v}(L) = \frac{-1}{24} + \frac{u^2}{2}.$$
	This expression for the growth rate of the open KPZ equation could also be deduced from the convergence of open ASEP to the KPZ equation \cite{corwin2016open, parekh2017kpz} along with the well-known  fact that for a special choice of boundary conditions,  stationary measures of  open ASEP are Bernoulli i.i.d. 
\end{remark}

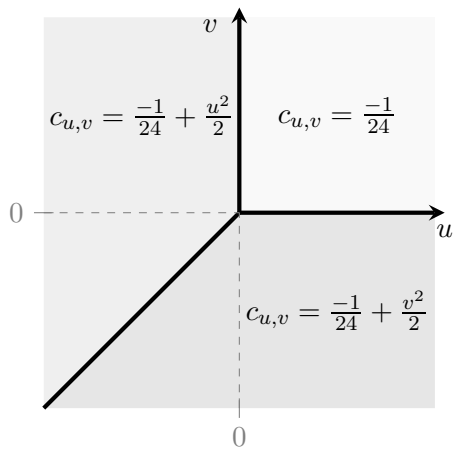
\begin{figure}
	\begin{center}
		\begin{tikzpicture}[scale=1.3, every text node part/.style={align=center}]
			\fill[gray!05] (2,2) -- (4,2) -- (4,4) -- (2,4) -- cycle;
			\fill[gray!20] (2,2) -- (4,2) -- (4,0) -- (0,0) -- cycle;
			\fill[ gray!13] (2,2) -- (2,4) -- (0,4) -- (0,0) -- cycle;
			\draw[ultra thick] (0,0) -- (2,2);
			\draw[ultra thick, -stealth] (2,2) -- (2,4.1) node[anchor=north east] {$v$ };
			\draw[ultra thick, -stealth] (2,2) -- (4.1,2) node[anchor=north] {$u$};
			\draw[dashed, gray] (2,0) -- (2,2);
			\draw[dashed, gray] (0,2) -- (2,2);
			\draw[gray] (2,0.1) -- (2,-0.1) node[anchor=north] {$0$};
			\draw[gray] (0.1,2) -- (-0.1,2) node[anchor=east] {$0$};
			\draw (3,3) node{ $c_{u,v} = \frac{-1}{24} $};
			\draw (1,3) node{ $c_{u,v} = \frac{-1}{24}+\frac{u^2}{2} $};
			\draw (3,1) node{ $c_{u,v} = \frac{-1}{24}+\frac{v^2}{2} $};
		\end{tikzpicture}
	\end{center}
	\caption{The phase diagram for $c_{u,v} = \lim_{L\to\infty} c_{u,v}(L)$}
	\label{fig:phasetransition}
\end{figure}
As $L$ goes to infinity the value of $c_{u,v}(L)$ should converge to the growth rate of the KPZ equation in full-space, that is $-1/24$. This is correct when the boundaries do not play too much role. We show that the asymptotics of $c_{u,v}(L)$ are governed by the phase diagram shown in Fig. \ref{fig:phasetransition}. 
\begin{corollary}
	We have the convergence
	$$ \lim_{L\to\infty} c_{u,v}(L) = \begin{cases}
		\frac{-1}{24} &\mbox{ when } u,v\geq 0,\\ 
		\frac{-1}{24} + \frac{u^2}{2}&\mbox{ when } u\leq 0 , v\geq u,\\ 
		\frac{-1}{24} + \frac{v^2}{2}&\mbox{ when } v\leq 0 , u\geq v.\\ 
	\end{cases}$$
	\label{cor:phasetransition}
\end{corollary}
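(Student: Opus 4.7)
The plan is to extract the large-$L$ asymptotics of $\mathcal Z_{u,v}(L)$ from the contour integral \eqref{eq:normalizationexplicit}, case by case in the three regions of Fig.~\ref{fig:phasetransition}, and conclude via $c_{u,v}(L)=\frac{-1}{24}+\frac{1}{2}\partial_L\log\mathcal Z_{u,v}(L)$. The governing mechanism is that on $\I\R$ the integrand is dominated by the Gaussian factor $e^{z^2L}$, producing only power-law decay in $L$, while analytic continuation of $u$ or $v$ below zero drags the contour past poles of the Gamma factors and picks up residues that grow exponentially in $L$; the rate of the dominant residue then determines $\partial_L\log\mathcal Z_{u,v}(L)$ in the limit.

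In the phase $u,v\geq 0$, I would parametrize $z=\I y$ so that $e^{z^2L}=e^{-y^2L}$. Near $y=0$, the factor $1/(\Gamma(2z)\Gamma(-2z))$ has a double zero contributing $\sim 4y^2$, while $\Gamma(u+\I y)\Gamma(u-\I y)\to\Gamma(u)^2$ and similarly for $v$, so the integrand is $\sim 2\Gamma(u)^2\Gamma(v)^2\, y^2 e^{-y^2L}$ there. Stirling's estimate shows that the exponential tails $e^{-\pi|y|}$ of the numerator and denominator Gamma functions cancel, so the integrand is globally bounded by an integrable multiple of $e^{-y^2L}$. A standard Laplace estimate yields $\mathcal Z_{u,v}(L)\sim \Gamma(u)^2\Gamma(v)^2/(2\sqrt{\pi})\cdot L^{-3/2}$, whence $\partial_L\log\mathcal Z_{u,v}(L)=O(1/L)\to 0$ and $c_{u,v}(L)\to -1/24$.

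For the phase $u\leq 0,\ v\geq u$, I would deform the contour past the poles of $\Gamma(u\pm z)$ that have crossed $\I\R$ (and of $\Gamma(v\pm z)$ if $v<0$). When $u$ decreases through $-k$, the pole at $z=-u-k$ crosses left-to-right and, by the reflection symmetry $z\to -z$ of the integrand, its mirror at $z=u+k$ crosses right-to-left; the combined residue pair contributes a term $\propto e^{(u+k)^2L}$, with explicit coefficient $\Gamma(v-u)\Gamma(v+u)/\Gamma(-2u)$ when $k=0$. Since $(u+k)^2$ is maximized on $k\in[0,-u]$ at $k=0$ with value $u^2$, the $u$-residues contribute $\sim C_1(u,v)e^{u^2L}$. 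The $v$-residues are picked up only when $v<0$, in which case the constraint $u\leq v<0$ gives $|u|\geq|v|$ and hence $u^2\geq v^2$; the $v$-residues then contribute at most $e^{v^2L}\leq e^{u^2L}$. In either subcase the leading behavior is $\mathcal Z_{u,v}(L)\sim C(u,v)e^{u^2L}$, so $c_{u,v}(L)\to -1/24+u^2/2$. The third region follows by the symmetry $u\leftrightarrow v$.

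The main delicate point is ensuring that the leading coefficient does not vanish: $\Gamma(v+u)$ has poles on $u+v\in\mathbb{Z}_{\leq 0}$, a locus intersecting the $u,v<0$ portion of the $u$-dominated phase. At those parameter values the formal leading residue at $z=-u$ diverges, and it must be compensated by a matching divergence from a sub-leading residue (at $z=\pm(v+k)$ with $u+v+k=0$) so that $\mathcal Z_{u,v}(L)$ defined by analytic continuation remains finite while preserving the exponential rate $u^2$. The cleanest way to bypass sign-tracking is to route through $\mathcal K_{u,v}(L)$ via \eqref{eq:normalizationKPZ}, since $\mathcal K_{u,v}(L)$ is manifestly positive; the same residue analysis then applies without the degenerate-coefficient issue.
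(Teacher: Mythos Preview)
Your approach is essentially the paper's: Laplace analysis on $\I\R$ for $u,v>0$ giving $\mathcal Z_{u,v}(L)\sim C\,L^{-3/2}$, then residue collection after analytic continuation in the other phases, with the dominant exponential $e^{u^2L}$ (resp.\ $e^{v^2L}$) coming from the pole at $z=\pm u$ (resp.\ $z=\pm v$). The paper removes the $\Gamma(u+v)$ singularity exactly as you propose, by working with $\tilde{\mathcal Z}_{u,v}=\mathcal Z_{u,v}/\Gamma(u+v)$, which is the same as routing through $\mathcal K_{u,v}$ up to the harmless factor $e^{-L(u^2+v^2)/2}$. One small boundary case you do not flag but the paper does: on the line $u=v<0$ the residue coefficient $\Gamma(v-u)/(2\Gamma(-2u))$ itself diverges, and one must combine the $u$- and $v$-residues (which individually blow up but whose sum is finite by l'H\^opital) to conclude that the rate is still $u^2$.
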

Corollary \ref{cor:phasetransition} is proved in Section \ref{sec:proofphasetransition}.

\subsection{Proof of Theorem \ref{th:constantcuv}}
\label{sec:proofcuv}

	Consider the log-gamma directed polymer model from Definition \ref{def:loggamma}, starting from the stationary measure with from Definition \ref{def:stationaryloggamma}, with  $H(0,0)=0$. Let  $n=kN$  for some integer $k$. We can decompose the computation of $\mathbb E \left[ H(n,n) \right]$ as a sum of horizontal and vertical increments as in  Fig. \ref{fig:computationexpectation}.
	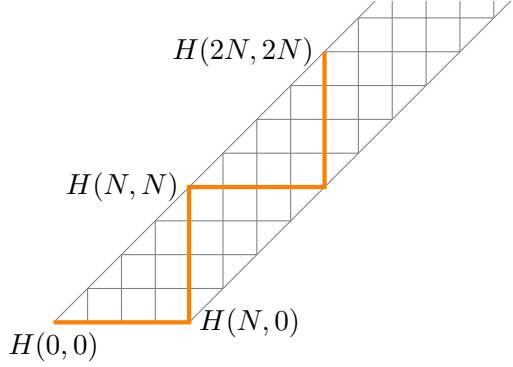
\begin{figure}
\begin{center}
	\begin{tikzpicture}[scale=0.45]
		\draw[gray] (0,0)--(9.5,9.5);
		\draw[gray] (4,0)--(13.5,9.5);
		\foreach \x in {0,1,...,9}
		\draw[gray] (\x,\x) -- (4+\x,\x); 
		\foreach \x in {0,1,...,5}
		\draw[gray] (4+\x, \x) -- (4+\x,4+\x);
		\draw[gray] (1,0) -- (1,1);
		\draw[gray] (2,0) -- (2,2);
		\draw[gray] (3,0) -- (3,3);
		\draw[gray] (10,6) -- (10,9.5);
		\draw[gray] (11,7) -- (11,9.5);
		\draw[gray] (12,8) -- (12,9.5);
		\draw[gray] (13,9) -- (13,9.5);
		\draw[ultra thick, orange] (0,0) -- (4,0) -- (4,4) -- (8,4) -- (8,8); 
		\draw (0,0) node[below]{$H(0,0)$};
		\draw (4,0) node[right]{$H(N,0)$};
		\draw (4,4) node[left]{$H(N,N)$};
		\draw (8,8) node[left]{$H(2N,2N)$};
	\end{tikzpicture}
\end{center}
\caption{Decomposition of $H(2N,2N)$ as a sum of horizontal and vertical increments.}
\label{fig:computationexpectation}
\end{figure}
This yields
\begin{align}	\mathbb E \left[ H(kN,kN) \right] &= k \, \times\,  \mathbb E \left[ H(N,N)-H(0,0) \right]\\
	&=  k\Big( \mathbb E \left[ H(N,N)- H(N,0)\right]+\mathbb E\left[H(N,0)-H(0,0) \right]\Big),
	\label{eq:decomposition}
\end{align}
where we have use stationarity in the first equality. Both expectations $\mathbb E \left[ H(N,N)- H(N,0)\right]$ and $\mathbb E\left[H(N,0)-H(0,0) \right]$ can be extracted from the description of the stationary measure. More precisely, we will use the fact that by symmetry, $H(N,N)- H(N,0)$ has the same distribution as $H(N,0)-H(0,0)$ provided one exchanges the roles of $u$ and $v$. 
Then, for $u,v>0$, $\mathbb E\left[H(N,0)-H(0,0) \right]$ can be computed as 
$$\mathbb E\left[H(N,0)-H(0,0) \right] = -\partial_t \,\mathbb E_{\rm LG}^{\alpha, u,v}\left[ e^{-tL_1(N)}\right]\bigg\vert_{t=0}.$$ 
Recall the formula  \eqref{eq:Laplacetransformspecialcaseintro}. We find  
\begin{multline*}
	-\partial_t \left( \Gamma(u-t\pm z)\Gamma(v+t\pm z)\left(\Gamma(\alpha+t\pm z) \right)^N\right)\Big\vert_{t=0}  =   -\Gamma(u\pm z)\Gamma(v\pm z)\left(\Gamma(\alpha \pm z) \right)^N \\  \times \left( -\psi(u+z)-\psi(u-z)+\psi(v+z)+\psi(v-z) + n \left(\psi(\alpha+z)+\psi(\alpha-z) \right)\right) , 
\end{multline*} 
where $\psi(z)=\partial_z \log \Gamma(z)$, so that 
\begin{multline}  	\mathbb E_{\rm LG}^{\alpha, u,v}\left[ L_1(N) \right] =   \frac{-1}{\mathcal Z_{\rm LG}^{\alpha,u,v}(N)} \int_{\I\R}\frac{dz}{2\I\pi} \frac{\Gamma(u\pm z)\Gamma(v\pm z)(\Gamma(\alpha\pm z))^N}{4\Gamma(2z)\Gamma(-2z)}\\ \times \left(-\psi(u+z)-\psi(u-z)+\psi(v+z)+\psi(v-z)+n(\psi(\alpha+z)+\psi(\alpha-z)) \right).
\end{multline}
Hence, using \eqref{eq:decomposition}, 
\begin{multline}  
\mathbb E\left[H(n,n) \right] =     \frac{-n }{\mathcal Z_{\rm LG}^{\alpha,u,v}(N)} \int_{\I\R}\frac{dz}{2\I\pi} \frac{\Gamma(u\pm z)\Gamma(v\pm z)\prod_{i=1}^N\Gamma(\alpha_i\pm z)}{2\Gamma(2z)\Gamma(-2z)} (\psi(\alpha+z)+\psi(\alpha-z)),
\label{eq:formulaexpectationH}
\end{multline}
which is the claimed formula. 

Now we use the scalings \eqref{eq:scalings} in \eqref{eq:formulaexpectationH}. For $\alpha=\frac 1 2 +\eps^{-1}$, using Stirling approximation, we have 
$$ \left( \Gamma(\alpha+z)\Gamma(\alpha-z)\right)^{\eps^{-1}L}  \sim f(\eps) e^{z^2 L}$$
for some function  $f(\eps)$  and 
$$\frac{\eps^{-2}t}{2}\left(\psi(\alpha+z)+\psi(\alpha-z)\right) - t\eps^{-2} \log(\eps^{-1}) \xrightarrow[\eps\to 0]{} \frac{t}{24}(1-12z^2).$$ 
The expression of $f(\eps)$ can be determined exactly, but it does not matter, since $f(\eps)$ will eventually cancel, being  present in the integral in  \eqref{eq:formulaexpectationH} and in the normalization constant. Combining the two limits above, one obtains 
that 
\begin{equation*}
	\lim_{\eps\to 0}\frac{1}{f(\eps)} \mathcal Z_{\rm LG}^{\alpha,u,v}(\eps^{-1}L) = \mathcal Z_{u,v}(L) := \int_{\I\R}\frac{dz}{2\I\pi} \frac{\Gamma(u\pm z)\Gamma(v\pm z)e^{z^2 L}}{2\Gamma(2z)\Gamma(-2z)}
\end{equation*}
and 
\begin{equation}  
\lim_{\eps\to 0}\mathbb E\left[ h_{\eps}(t,0) \right] =   \frac{-t }{\mathcal Z_{u,v}} \int_{\I\R}\frac{dz}{2\I\pi} \frac{\Gamma(u\pm z)\Gamma(v\pm z)e^{z^2 L}}{2\Gamma(2z)\Gamma(-2z)} \frac{1-12 z^2}{24}.
\label{eq:expectationlimit}
\end{equation}
Strictly speaking, in order to justify that the integral in \eqref{eq:formulaexpectationH} indeed converges to the integral of the pointwise limit of the integrand, one needs to bound the Gamma and digamma factors up to a sufficient order and apply the dominated convergence theorem. Appropriate bounds for Gamma and digamma functions are available in the literature, and their apprlication poses no difficulty, so we omit the details. 
Finally, we recognize in \eqref{eq:expectationlimit} the quantity $t \,c_{u,v}(L)$, which concludes the proof when $u,v>0$.

It remains to explain why the formula still holds for any $u,v$, where the value of $c_{u,v}(L)$ is defined as the analytic continuation of its expression when $u,v>0$. 

By Theorem \ref{theo:formulaloggammaintro}, when $u<t<-v $, 
$$\mathbb E_{\rm LG}^{\alpha, u,v}\left[ e^{-2 tL_1(N)}\right] = \frac{\mathcal I(\alpha, u,v,N,t)}{\mathcal I(\alpha, u,v,N,0)}$$
where 
$$\mathcal I(\alpha, u,v,N,t)=\int_{\I\R}\frac{dz}{2\I\pi} F_{\alpha, u,v,N,t}(z)$$ and 
$$ F_{\alpha, u,v,N,t}(z) = \frac{\Gamma(u-t\pm z)\Gamma(v+t\pm z)\left(\Gamma(\alpha+t\pm z) \right)^N}{2\Gamma(2z)\Gamma(-2z)}.$$ 
 If $u-t<0$ or $v+t<0$, Theorem \ref{theo:formulaloggammaintro} says that the value of $\mathbb E_{\rm LG}^{\alpha, u,v}\left[ e^{-2 tL_1(N)}\right]$ is the analytic continuation of its expression above, which is  is given by the following. 
\begin{multline} 
	\mathcal I(\alpha, u,v,N,t)=\int_{\I\R+\epsilon}\frac{dz}{2\I\pi} F_{\alpha, u,v,N,t}(z) 
	\\ + \sum_{i\in \mathbb Z_{\geq 0}; u-t+i\leq 0} \mathrm{Res}_{z=t-u-i} \lbrace F_{\alpha, u,v,N,t}(z) \rbrace - \mathrm{Res}_{z=-t+u+i} \lbrace F_{\alpha, u,v,N,t}(z) \rbrace  \\ 
	+ \sum_{i\in \mathbb Z_{\geq 0}; v+t+i\leq 0} \mathrm{Res}_{z=-t-v-i} \lbrace F_{\alpha, u,v,N,t}(z) \rbrace - \mathrm{Res}_{z=t+v+i} \lbrace F_{\alpha, u,v,N,t}(z) \rbrace.
	\label{eq:analyticcontinuation}
\end{multline}
In the first integral, we chose the contour to be $\I\R+\epsilon$ for some small $\epsilon >0$. This is necessary only to cover the cases where $u-t$ or $v+t$ is a negative integer, otherwise the contour may be $\I\R$. 
 
At this point, one can reproduce all the steps and limits performed above. Eventually, one finds that \eqref{eq:expectationlimit} still holds, provided one replaces the integral operator $f\mapsto \int_{\I\R}\frac{dz}{2\I\pi}f(z)$ by 
\begin{multline} 
	 f\mapsto \int_{\I\R+\epsilon}\frac{dz}{2\I\pi} f(z) + \sum_{i\in \mathbb Z_{\geq 0}; u-t+i\leq 0} \mathrm{Res}_{z=t-u-i} \lbrace f(z) \rbrace - \mathrm{Res}_{z=-t+u+i} \lbrace f(z) \rbrace\\ 
+ \sum_{i\in \mathbb Z_{\geq 0}; v+t+i\leq 0} \mathrm{Res}_{z=t+v+i} \lbrace f(z)  \rbrace - \mathrm{Res}_{z=-t-v-i} \lbrace f(z)  \rbrace.
\label{eq:deformedintegral}
\end{multline}
This shows that the limit $\lim_{\eps\to 0}\mathbb E\left[ h_{\eps}(t,0) \right]$ is given by the analytic continuation of the expression for $t\,c_{u,v}(L)$ when $u,v>0$, and it concludes the proof of Theorem \ref{th:constantcuv}.

\begin{remark}
	The formula from Theorem \ref{th:constantcuv} can be rewritten as 
	$$ \mathbb E\left[H(n,n)\right] = -n \,\partial_{\alpha_1} \log\left( \mathcal Z_{\rm LG}^{\boldsymbol\alpha,u,v}(N) \right)\Big\vert_{\alpha_1=\dots=\alpha_N=\alpha}. $$
	Given the definition of the normalization constant  in  \eqref{eq:deftwolayerWhittaker}, this implies that $ \mathbb E\left[H(n,n)\right] = n\mathbb E\left[\mathbf L_1(1)+\mathbf L_2(1)\right]$. This identity could have been guessed, since it can be argued that $\mathbf L_2(1)$ has the same distribution as the vertical increment of free energy  $H(k,k)-H(k,k-1)$ (in the model started from stationary initial condition $H_0$). Actually, this provides an alternative route for proving Theorem \ref{th:constantcuv}.
\end{remark}

\subsection{Proof of Corollary \ref{cor:phasetransition}}
\label{sec:proofphasetransition}
First assume that $u,v>0$. Using the change of variables $z=\tilde z/\sqrt{L}$, it is easy to see that as $L\to\infty$, 
$$ \mathcal Z_{u,v}(L) \sim  \frac{-\Gamma(u)^2\Gamma(v)^2}{L^{3/2}} \int_{\I\R}\frac{dz}{2\I\pi} 2z^2e^{z^2} = \frac{\Gamma(u)^2\Gamma(v)^2}{2\sqrt{\pi}L^{3/2}},$$
and 
$$ \partial_L \mathcal Z_{u,v}(L) \sim  \frac{-\Gamma(u)^2\Gamma(v)^2}{L^{5/2}} \int_{\I\R}\frac{dz}{2\I\pi} 2z^4e^{z^2}=\frac{-3\Gamma(u)^2\Gamma(v)^2}{4\sqrt{\pi}L^{5/2}}.$$
Hence, we find that 
$$c_{u,v}(L)=\frac{-1}{24} - \frac{3}{4L}+o\left(\frac{1}{L}\right),$$ 
which converges to $-1/24$ as claimed.

For other values of $u,v$, we need to compute the analytic continuation of $c_{u,v}(L)$. We note that the partition function $\mathcal Z_{u,v}(L)$ is analytic only when $u+v>0$ (but the singularity at $u+v=0$ cancels in the ratio $\partial_L\mathcal Z_{u,v}(L)/\mathcal Z_{u,v}(L)$ which is analytic for all $u,v$). In view of \eqref{eq:normalizationKPZ}, it is convenient to set $\tilde{\mathcal Z}_{u,v}(L)= \mathcal Z_{u,v}(L)/\Gamma(u+v)$ so that we still have 
$$ c_{u,v}(L)=\frac{-1}{24}+ \frac{1}{2} \log \tilde{\mathcal Z}_{u,v}(L) $$
and we can work with the analytic continuation of $\log \tilde{\mathcal Z}_{u,v}(L)$, which is given by 
\begin{multline}
	  \tilde{\mathcal Z}_{u,v}(L) =  \frac{1}{\Gamma(u+v)}\int_{\I\R}\frac{dz}{2\I\pi} F(z) 
	 + \frac{1}{\Gamma(u+v)}\sum_{i\in \Z_{\geq 0}; u+i<0} \mathrm{Res}_{z=-u-i}\lbrace F(z)\rbrace - \mathrm{Res}_{z=u+i}\lbrace F(z)\rbrace \\
	 + \frac{1}{\Gamma(u+v)}\sum_{i\in \Z_{\geq 0}; v+i<0} \mathrm{Res}_{z=-v-i}\lbrace F(z)\rbrace - \mathrm{Res}_{z=v+i}\lbrace F(z)\rbrace
\end{multline}
with 
$$ F(z) =  \frac{\Gamma(u\pm z)\Gamma(v\pm z)e^{z^2 L}}{2\Gamma(2z)\Gamma(-2z)}.$$
Computing explicitly the residues, we find that when $u\leq 0$ and $u<v$, the asymptotic behaviour of $\tilde{\mathcal Z}_{u,v}(L)$ as $L\to\infty$ is dominated by the residues at $z=\pm u$. 
We find that 
\begin{equation}
	 \frac{1}{\Gamma(u+v)} \mathrm{Res}_{z=-u}\lbrace F(z)\rbrace \sim_{L\to\infty}  \frac{\Gamma(v-u)}{2\Gamma(-2u)}e^{u^2L},
	 \label{eq:residuecomputed}
\end{equation}
so that 
\begin{equation}
	\frac{\partial_L \tilde{\mathcal Z}_{u,v}(L)}{\tilde{\mathcal Z}_{u,v}(L)} \xrightarrow[L\to\infty]{} u^2,
	\label{eq:limitratio}
\end{equation} 
and 
$c_{u,v(L)}$ converges to $-1/24+u^2/2$ as claimed. From the expression of the residue in \eqref{eq:residuecomputed}, it seems that there may be a singularity when $u=v<0$. This is actually not the case if one takes into account both residues at $\pm u$ and $\pm v$. Although each residue has a singularity at $u+v$, their sum can be analytically continued when $u=v$ via l'Hôpital's rule. One finds that \eqref{eq:limitratio} still holds. 
Finally, the case $v\leq 0, v<u$ is treated similarly, exchanging the roles of $u$ and $v$.

\renewcommand{\emph}[1]{\textit{#1}}
\bibliography{mainbiblio.bib}
\bibliographystyle{goodbibtexstyle} 
\end{document}